\definecolor{col1}{RGB}{254,97,0}
\definecolor{col2}{RGB}{100,143,255}
\definecolor{col3}{RGB}{120, 94, 240}
\definecolor{col4}{RGB}{220, 38, 127}
\definecolor{col5}{RGB}{255, 176, 0}
\tikzset{vertex/.style = {shape=circle,draw,fill,minimum size=3pt, inner sep=0pt}}
\newtheorem{theorem}{Theorem}[section]
\newtheorem{lemma}[theorem]{Lemma}
\newtheorem{definition}[theorem]{Definition}
\newtheorem{corollary}[theorem]{Corollary}
\newtheorem{proposition}[theorem]{Proposition}
\newtheorem{conjecture}[theorem]{Conjecture}
\theoremstyle{remark}
\newtheorem{remark}[theorem]{Remark}
\newcommand{\bb}[1]{{\overline{#1}}}
\newcommand{\Q}{\mathbb{Q}}
\newcommand{\Z}{\mathbb{Z}}
\newcommand{\tQ}{\widetilde{\Q}}
\DeclareMathOperator{\Tr}{Tr}
\newcommand{\A}{{\operatorname{U\Gamma}}}
\newcommand{\PA}{{\operatorname{\Gamma}}}
\DeclareMathOperator{\SG}{{\mathbb S}}
\newcommand{\GL}{{\operatorname{GL}}}
\newcommand{\Aut}{{\operatorname{Aut}}}
\newcommand{\UAut}{{\operatorname{UAut}}}
\newcommand{\Out}{{\operatorname{Out}}}
\newcommand{\sign}{{\operatorname{sign}}}
\newcommand{\Ind}{\operatorname{Ind}}
\newcommand{\ch}{\operatorname{ch}}
\newcommand{\parts}{\vdash}
\DeclareMathOperator{\im}{im}
\DeclareMathOperator{\id}{id}
\DeclareMathOperator{\ee}{\widehat{e}}
\DeclareMathOperator{\rank}{\mathrm{rk}}
\newcommand{\MG}{\mathcal{MG}}
\newcommand{\F}{\mathcal{F}}
\title[The ${\mathbb S}_n$-equivariant Euler characteristic of the moduli space of graphs]{The ${\mathbb S}_n$-equivariant Euler characteristic of\\ the moduli space of graphs}
\author{Michael Borinsky \and Jos Vermaseren}
\address{
Michael Borinsky\\
Institute for Theoretical Studies\\
ETH Z\"urich\\
8092 Zürich, Switzerland
}
\address{
Jos Vermaseren\\
Theory Department\\
Nikhef\\
1098 XG Amsterdam, Netherlands
}
\begin{document}

\begin{abstract}
We prove a formula for the ${\mathbb S}_n$-equivariant Euler characteristic of the moduli space of graphs $\mathcal{MG}_{g,n}$. Moreover, we prove that the rational ${\mathbb S}_n$-invariant cohomology of $\mathcal{MG}_{g,n}$ stabilizes for large $n$. That means, if $n \geq g \geq 2$, then there are isomorphisms $H^k(\mathcal{MG}_{g,n};\mathbb{Q})^{{\mathbb S}_n} \rightarrow H^k(\mathcal{MG}_{g,n+1};\mathbb{Q})^{{\mathbb S}_{n+1}}$ for all $k$.
\end{abstract}
\maketitle

\section{Introduction}
A graph $G$ is a finite, at most one-dimensional CW complex. It has \emph{rank} $g$ if its fundamental group is a free group of rank $g$: $\pi_1(G) \cong F_g$. %
Here, graphs shall be \emph{admissible}, that means they do not have vertices of degree $0$ or $2$.
Univalent vertices have a special role and are called \emph{legs} (often also \emph{hairs}, \emph{leaves} or \emph{marked points}).
Similarly, we will reserve the name \emph{edge} to 1-cells that are only incident to non-leg vertices.
Legs of graphs are uniquely labeled by integers $\{1,\ldots,n\}$.

A \emph{metric graph} is additionally equipped with a length $\ell(e) \geq 0$ for each edge $e$ such that all edge lengths sum to one, $\sum_{e} \ell(e)=1$.
Fix $g,n$ such that $g >0$ and $2g-2+n > 0$. 
The moduli space of graphs, $\MG_{g,n}$,
is the space of isometry classes of metric graphs of rank $g$ with $n$ legs, except for graphs  that have a cycle in which all edges have length $0$.
It inherits the topology from the metric and by identifying each graph that has an edge $e$ of length $0$ with the respective graph where $e$ is collapsed.

The moduli space of graphs was introduced in \cite{CV},
where it was shown that $\MG_{g,0}$ serves as a rational classifying space for
$\Out(F_g)$, the outer automorphism group of the free group of rank $g$.
Further,
$\MG_{g,n}$ can be seen as the moduli space of \emph{pure tropical curves}~\cite{Abramovich} and it is relevant for  \emph{Feynman amplitudes}, central objects in quantum field theory~\cite{Berghoff:2017dyq}. 

A partition $\lambda \parts n$ gives rise to both an irreducible representation $V_\lambda$ of the symmetric group $\SG_n$ and a Schur polynomial $s_\lambda$, a symmetric polynomial in $\Lambda_n = \Q[x_1,\ldots,x_n]^{\SG_n}$. 
The symmetric group acts on the cohomology of $\MG_{g,n}$ by permuting the leg-labels. 
So, $H^k(\MG_{g,n};\Q)$ is an $\SG_n$-representation that we can decompose into irreducibles, i.e.~there are integers $c_{g,\lambda}^k$ such that
\begin{align*} H^k(\MG_{g,n};\Q)\, \cong\, \bigoplus_{\lambda \parts n} c_{g,\lambda}^k V_\lambda. \end{align*}
The multiplicities $c_{g,\lambda}^k$ are known explicitly if $g\leq 2$ \cite{conant2016assembling}.
The $\SG_n$-equivariant Euler characteristic of $\MG_{g,n}$ is the following symmetric polynomial,
\begin{align} \label{eq:Sndef} e_{\SG_n}(\MG_{g,n}) \,=\,\sum_{\lambda \parts n} s_\lambda \sum_{k} (-1)^k c_{g,\lambda}^k. \end{align}

Our first main result is an effective formula for $e_{\SG_n}(\MG_{g,n})$. It is stated as Theorem~\ref{thm:comp}. 
Its proof in Section~\ref{sec:SGequivariant} is based on prior work by Vogtmann and the first author~\cite{BV2}.

Analogous formulas exist, e.g., for the $\SG_n$-equivariant Euler characteristic of $\mathcal M_{g,n}$~\cite{gorsky2014equivariant} and for the $\SG_n$-equivariant Euler characteristic of  the moduli space of stable tropical curves \cite{chan2019s_n}. 
The latter moduli space is a compactification of $\mathcal {MG}_{g,n}$ and its cohomology injects into the cohomology of $\mathcal M_{g,n}$ \cite{CGP}. 

There is another important invariant of moduli spaces such as $\MG_{g,n}$, which is in general a rational number: the \emph{virtual} Euler characteristic. It has more convenient properties while studying maps between moduli spaces than the \emph{classical} Euler characteristic, the alternating sum of the Betti numbers.  The virtual Euler characteristic of $\MG_{g,n}$ was studied, for instance, in \cite{BV}. The quantitative relation between the virtual and classical Euler characteristic of $\MG_{g,n}$ is discussed in \S \ref{sec:asy}.

To obtain the classical Euler characteristic, $e(\MG_{g,n})=\sum_{k} (-1)^k \dim H^k(\MG_{g,n};\Q)$, we replace $s_\lambda$ with $\dim V_\lambda$ in eq.~\eqref{eq:Sndef}. Another specialization of $e_{\SG_n}(\MG_{g,n})$ is the Euler characteristic for the \emph{$\SG_n$-invariant} part of $\MG_{g,n}$'s cohomology, denoted as $e(\MG_{g,n}^{\SG_n})$. We obtain it from $e_{\SG_n}(\MG_{g,n})$ by setting $s_{(n)} =1$, (which corresponds to the trivial representation), and $s_{\lambda} = 0$ for all $\lambda \neq (n)$:
\begin{align*} e(\MG_{g,n}^{\SG_n})\,=\,\sum_{k} (-1)^k \dim H^k(\MG_{g,n};\Q)^{\SG_n}\,=\, \sum_k (-1)^k c_{g,(n)}^k. \end{align*}

We denote $\PA_{g,n}$ as the group of homotopy classes of self-homotopy equivalences of a connected graph $G$ of rank $g$ with $n$ legs that fix the legs point-wise. I.e.~$\PA_{g,n}=\pi_0(\mathrm{HE}(G,\mathrm{fix}~\partial G))$. We have $\PA_{g,0}\cong \Out(F_g)$ and $\PA_{g,1}\cong \Aut(F_g)$. The moduli space $\MG_{g,n}$ is a rational classifying space of $\PA_{g,n}$, implying that the rational cohomology of $\MG_{g,n}$ is the same as that of $\PA_{g,n}$  (see,~e.g.,~\cite{conant2016assembling}).

Kontsevich made the observation that the homology of $\MG_{g,n}$ is computed by what he termed the \emph{Lie graph complex}~\cite{Ko1}. In Section~\ref{sec:forest}, we utilize a related concept known as the \emph{forested graph complex}, which was introduced in \cite{CoVo}, for the purpose of computing this homology.

In his work, Kontsevich also proposed an \emph{odd} version of the Lie graph complex. This differs from the original graph complex by the notion of the graphs' orientation. Detailed discussions regarding this are given in Sections~\ref{sec:forest} and \ref{sec:orient}. He also indicated that, in the absence of legs, this odd graph complex computes the cohomology of $\Out(F_g)$ with coefficients in $\widetilde{\mathbb Q}$, the representation of $\Out(F_g)$ that results from the composition of the natural map $\Out(F_g) \rightarrow \GL_g(\Z)$ with the determinant.
A formula for this `odd' Euler characteristic, denoted as $e^\mathrm{odd}(\Out(F_g)) = \sum_{k} (-1)^k \dim H^k(\Out(F_g),\widetilde \Q)$, was previously presented in~\cite{BV2}. Similar formulas can be obtained for all groups $\PA_{g,n}$ due to the existence of the natural surjection $\PA_{g,n}\rightarrow \Out(F_g)$ that comes from forgetting the legs.
The odd Euler characteristic is equivalent to the Euler characteristic of the moduli space of graphs $\MG_{g,n}$ that is fibered by the local system resulting from the $\det$-representation of $\Out(F_n)$ as pointed out in \cite{Ko1}.
Here, we will prove a formula for the $\SG_n$-equivariant odd Euler characteristic, denoted as $e^\mathrm{odd}_{\SG_n}(\MG_{g,n})$. It is defined analogously to $e_{\SG_n}(\MG_{g,n})$ in eq.~\eqref{eq:Sndef} with the coefficients in $\tQ$ instead of $\Q$.

Morita, Sakasai, and Suzuki previously calculated the Euler characteristic, $e(\MG_{g,0}) = e(\Out(F_g))$ for $g\leq 11$~\cite{morita2015integral}. 
The outcome of recent work of Vogtmann and the first author was the asymptotic formula, $e(\Out(F_g)) \sim - e^{-\frac14} {\left(g/e\right)^g/(g \log g)^2}$ for large $g$, implying that the total dimension of the cohomology of $\Out(F_g)$ grows super-exponentially with $g$~\cite{BV2}. In this work, we enhance this result with explicit computations.

We leverage our effective formulas with the \texttt{FORM} programming language by the second author~\cite{Vermaseren:2000nd}, for instance, to compute $e(\Out(F_g))$ for all $g \leq 100$. The resulting values of $e(\Out(F_g))$ and $e^\mathrm{odd}(\Out(F_g))$ for $g\leq 15$ were published with \cite{BV2}.
The ancillary files to the arXiv version of the present article include these numbers for all $g\leq 100$, together with tables of $e(\MG_{g,n})$, $e^\mathrm{odd}(\MG_{g,n})$, $e(\MG_{g,n}^{\SG_n})$, $e^\mathrm{odd}(\MG_{g,n}^{\SG_n})$ for all $g+n \leq 60$, and the polynomials $e_{\SG_n}(\MG_{g,n})$, $e_{\SG_n}^\mathrm{odd}(\MG_{g,n})$ for all $g+n \leq 30$.
Subsets of these data are given in Tables~\ref{tab:eeven}--\ref{tab:eEodd}.
In Section~\ref{sec:form}, we comment on our implementation of Theorem \ref{thm:comp} in \texttt{FORM}, which is included in the ancillary files.
Based on our computed data, we give some empirical observations on the asymptotic growth rate of the Euler characteristic $e(\MG_{g,n})$ and its variations for large $g$ in Section~\ref{sec:asy}.

Our data in Tables~\ref{tab:eEeven}--\ref{tab:eEodd} exhibit an interesting pattern. The $\SG_n$-invariant Euler characteristics of $\MG_{g,n}$ \emph{stabilize} for large $n$.
The search for an explanation of this pattern led to our second main finding: The $\SG_n$-invariant cohomologies $H^\bullet(\MG_{g,n};\Q)^{\SG_n}$ and  $H^\bullet(\MG_{g,n};\tQ)^{\SG_n}$ \emph{stabilize} for large $n$. That means if  $n \geq g \geq 2$ and $\Q_\rho \in \{\Q,\tQ\}$, then there are  isomorphisms $H^k(\MG_{g,n};\Q_\rho)^{\SG_n}\rightarrow H^k(\MG_{g,n+1};\Q_\rho)^{\SG_n}$ for all $k$.
 This statement is proven as Theorem~\ref{thm:stab} in Section~\ref{sec:stable}, where we also comment on an analogy to the cohomology of the braid group. 
Theorem~\ref{thm:stab} complements previous stability results for $\MG_{g,n}$ (see \cite{conant2016assembling} and the references therein). 
Here, in contrast to these previous results, the range of stabilization does not depend on the cohomological degree. 

\section*{Acknowledgements}
MB owes many thanks to Karen Vogtmann for many discussions and related joint work.  He is indebted to S{\o}ren Galatius and Thomas Willwacher
for valuable insights that led to the proof of Theorem~\ref{thm:stab} and thanks Benjamin Br\"uck and Nathalie Wahl for helpful discussions.
We thank the anonymous referee for valuable comments and suggestions, in particular for spotting a gap in an earlier argument for Proposition~\ref{prop:oddeven}.
MB also thanks the Institute of Advanced Studies, Princeton US, where parts of this work were completed, for hospitality.
  MB was supported by Dr.\ Max Rössler, the Walter Haefner Foundation and the ETH Zürich Foundation.  %

\section{The \texorpdfstring{$\SG_n$}{Sn}-equivariant Euler characteristic of \texorpdfstring{$\MG_{g,n}$}{MGgn}}
\label{sec:SGequivariant}
\subsection{Forested graph complexes}
\label{sec:forest}
In this section we will describe chain complexes that compute the homology of $\MG_{g,n}$. These chain complexes are generated by certain graphs.

A subgraph is a subcomplex of a graph that consists of all its vertices, its non-edge 1-cells and a subset of its edges.
A subforest is an acyclic subgraph. A pair $(G,\Phi)$ of a graph $G$ and a subforest $\Phi \subset G$ is 
a \emph{forested graph}.
We write $|\Phi|$ for the number of edges in the forest $\Phi$.
Figure~\ref{fig:forestedgraphs} shows some examples of forested graphs with different rank, forest edge and leg numbers.
The forest edges are drawn thicker and in blue. Legs are drawn as labeled half-edges.
A $(+)$-marking $\sigma_\Phi$ of $(G,\Phi)$ is an ordering of the forest edges, i.e.~a bijection $\sigma_\Phi: E_\Phi \rightarrow \{1,\ldots,|\Phi|\}$.
A $(-)$-marking $(\sigma_\Phi,\sigma_{H_1(G,\Z)})$ of $(G,\Phi)$ is such an ordering of the forest edges $\sigma_\Phi$
together with a basis for the first homology of the graph, i.e.~a bijection
$\sigma_{H_1}:H_1(G,\Z) \rightarrow \Z^{h_1(G)}$, where $h_1(G) = \rank H_1(G,\Z)$ is the rank of $H_1(G,\Z)$ or equivalently the rank of $G$.
\begin{definition}
\label{def:fgc}
For given $g,n \geq 0$ with $2g-2+n \geq 0$, we define two $\Q$-vector spaces:
\begin{itemize}
\item
$\F_{g,n}^{+}$ is generated by tuples $(G,\Phi,\sigma_\Phi)$ of a connected admissible forested graph $(G,\Phi)$ of rank $g$ with $n$ legs,
which is 
$(+)$-marked with $\sigma_\Phi$, modulo the relation
\begin{align*} (G,\Phi,\pi \circ \sigma_\Phi) \sim \sign(\pi) \cdot (G,\Phi,\sigma_\Phi) & \text{ for all } \pi \in \SG_{|\Phi|}, \end{align*}
and modulo isomorphisms of $(+)$-marked forested graphs.
\item
$\F_{g,n}^{-}$ is generated by tuples $(G,\Phi,\sigma_\Phi,\sigma_{H_1})$ of a connected admissible forested graph $(G,\Phi)$ of rank $g$ with $n$ legs,
which is
$(-)$-marked with $\sigma_\Phi,\sigma_{H_1}$ modulo the relation
\begin{gather*} (G,\Phi,\pi \circ \sigma_\Phi, \rho_{H_1} \circ \sigma_{H_1}) \sim \sign(\pi) \cdot \det \rho_{H_1}\cdot (G,\Phi,\sigma_\Phi) \\
 \text{ for all } \pi \in \SG_{|\Phi|} \text{ and } \rho_{H_1} \in \GL_{h_1(G)}(\Z) \end{gather*}
and modulo isomorphisms of $(-)$-marked forested graphs.
\end{itemize}
\end{definition}

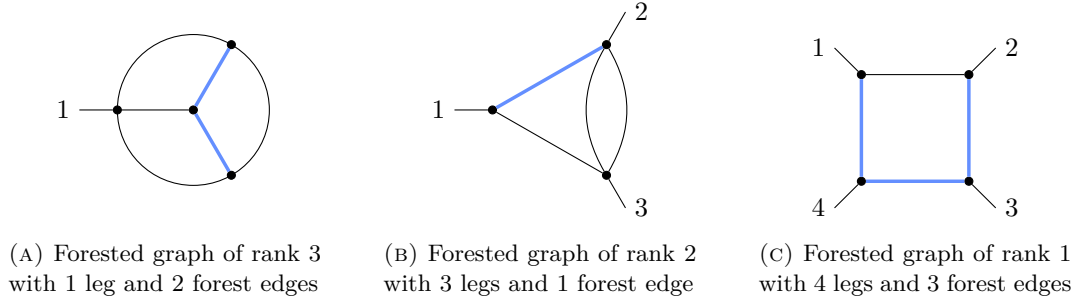
\begin{figure}
    \begin{subfigure}[b]{0.28\textwidth}
        \centering
        \begin{tikzpicture} \node[vertex] (v) at (0:0) {}; \draw (v) circle (1); \foreach \angle in {60,180,300} { \draw (\angle:1) node[vertex] (v\angle) {}; } \draw[col2, line width=1.3pt] (v) -- (v60); \draw[col2, line width=1.3pt] (v) -- (v300); \draw (v) -- (v180); \draw (v180) -- ++(-.5,0) node[left] {$1$}; \draw[white] (v300) -- ++(300:.5) node[right] {$3$}; \end{tikzpicture}
        \caption{Forested graph of rank $3$ with $1$ leg and $2$ forest edges}
        \label{fig:fg312}
    \end{subfigure}
    \qquad
    \begin{subfigure}[b]{0.28\textwidth}
        \centering
        \begin{tikzpicture} \foreach \angle in {60,180,300} { \draw (\angle:1) node[vertex] (v\angle) {}; } \draw (v180) -- (v300); \draw[col2, line width=1.3pt] (v180) -- (v60); \draw (v60) to[bend right=30] (v300); \draw (v60) to[bend left=30] (v300); \draw (v180) -- ++(180:.5) node[left] {$1$}; \draw (v60) -- ++(60:.5) node[right] {$2$}; \draw (v300) -- ++(300:.5) node[right] {$3$}; \end{tikzpicture}
        \caption{Forested graph of rank $2$ with $3$ legs and $1$ forest edge}
        \label{fig:fg231}
    \end{subfigure}
    \qquad
    \begin{subfigure}[b]{0.28\textwidth}
        \centering
        \begin{tikzpicture} \foreach \angle in {45,135,225,315} { \draw (\angle:1) node[vertex] (v\angle) {}; } \draw (v45) -- ++(45:.5) node[right] {$2$}; \draw (v135) -- ++(135:.5) node[left] {$1$}; \draw (v225) -- ++(225:.5) node[left] {$4$}; \draw (v315) -- ++(315:.5) node[right] {$3$}; \draw[col2, line width=1.3pt] (v135) -- (v225) -- (v315) -- (v45); \draw (v45) -- (v135); \end{tikzpicture}
        \caption{Forested graph of rank $1$ with $4$ legs and $3$ forest edges}
        \label{fig:fg143}
    \end{subfigure}
    \caption{Examples of forested graphs}
    \label{fig:forestedgraphs}
\end{figure}

We will discuss the relations imposed in this definition and how they give rise to two different ways to impose \emph{orientations} on graphs in more detail in Section~\ref{sec:orient}.

In what follows, we will often cover both the $(+)$ and $(-)$-cases analogously in the same sentence using the $\pm$-notation.
Notice that the vector spaces $\F_{g,n}^\pm$ are defined for a slightly larger range of pairs $(g,n)$ then the moduli spaces $\MG_{g,n}$. This generalization makes the generating functions for the Euler characteristics in Section~\ref{sec:genfun} easier to handle.
There are no (admissible) graphs of rank one without legs, so 
$\F_{1,0}^{+} = \F_{1,0}^{-} = \emptyset$, 
but there is one admissible graph of rank $0$ with two legs and no vertices or edges. 
It just consists of a 1-cell that connects the legs.

\begin{lemma}
\label{lmm:finite}
For all $g,n \geq 0$ with $2g-2+n > 0$,
the vector spaces $\F_{g,n}^{\pm}$ 
are finite dimensional.
\end{lemma}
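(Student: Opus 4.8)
The plan is to show that each $\F_{g,n}^{\pm}$ is a quotient of a free $\Q$-vector space on a \emph{finite} set of generators, namely the isomorphism classes of marked forested graphs entering Definition~\ref{def:fgc}. Since $\F_{g,n}^{\pm}$ is by construction spanned by the images of these generators, finite-dimensionality will follow once we bound the number of underlying forested graphs $(G,\Phi)$ up to isomorphism and check that the remaining marking data contributes only finitely much.

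First I would bound the combinatorial size of an admissible connected graph $G$ of rank $g$ with $n$ legs. Write $V$ for the number of non-leg (internal) vertices and $E$ for the number of edges. Collapsing the legs does not change $h_1$, so $g = \rank H_1(G,\Z) = E - V + 1$ whenever $V \geq 1$; under the hypothesis $2g-2+n>0$ one always has $V \geq 1$, the only graph with no internal vertex being the (excluded) rank-$0$ graph with two legs. Admissibility forces every internal vertex to have degree at least $3$, while the degree sum counts each edge twice and each leg once:
\begin{equation*}
3V \;\leq\; \sum_{v \text{ internal}} \deg(v) \;=\; 2E + n .
\end{equation*}
Substituting $E = g + V - 1$ yields $V \leq 2g-2+n$ and hence $E \leq 3g-3+n$. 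Thus both $V$ and $E$ are bounded in terms of $g$ and $n$, and since the $n$ legs carry a fixed labeling, there are only finitely many isomorphism classes of such graphs $G$.

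For each such $G$ there are at most $2^{E}$ subforests $\Phi \subseteq E(G)$ and at most $|\Phi|!$ orderings $\sigma_\Phi$, so the set of $(+)$-marked forested graphs is finite and $\F_{g,n}^{+}$ is finite-dimensional. In the $(-)$-case the homology marking $\sigma_{H_1}\colon H_1(G,\Z) \to \Z^{h_1(G)}$ ranges over an \emph{infinite} $\GL_{h_1(G)}(\Z)$-torsor, which is the one place where finiteness is not immediate and hence the main point to watch. I would resolve it by observing that the defining relation identifies any two homology markings up to the sign $\det \rho_{H_1} = \pm 1$: fixing one reference basis, every generator $(G,\Phi,\sigma_\Phi,\sigma_{H_1})$ equals $\pm$ the corresponding generator built from the reference basis. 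Hence each isomorphism class of $(G,\Phi,\sigma_\Phi)$ contributes at most a one-dimensional subspace to $\F_{g,n}^{-}$, and with finitely many such classes the space is again finite-dimensional. (As usual, a forested graph admitting an orientation-reversing automorphism contributes $0$, which only lowers the dimension further.)
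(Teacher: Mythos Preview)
Your argument is correct and follows essentially the same route as the paper: bound $V$ and $E$ via the admissibility inequality $3V\le 2E+n$ together with $E-V=g-1$, deduce finitely many isomorphism classes of forested graphs, and then observe that the relations in Definition~\ref{def:fgc} collapse the marking data so that each class contributes at most one generator. Your treatment is in fact more explicit than the paper's on the last point---in particular your handling of the infinite $\GL_{h_1(G)}(\Z)$-torsor of homology bases in the $(-)$-case, which the paper absorbs into a single sentence. One cosmetic remark: your opening line that $\F_{g,n}^{\pm}$ is ``a quotient of a free $\Q$-vector space on a \emph{finite} set of generators'' is literally true only for the $(+)$-case; for $(-)$ the pre-quotient generating set is infinite, and it is precisely your $\det\rho_{H_1}=\pm1$ argument that brings the dimension down---so you might rephrase that sentence to match what you actually prove.
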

\begin{proof}
Let $G$ be a connected admissible graph of rank $g$ with $n$ legs, $|E_G|$ edges, and $|V_G|$ vertices. The Euler characteristic of such a graph equals  $|V_G| - |E_G| = 1-g$. 
If we exclude the aforementioned graph with two legs and no vertices or edges, then each leg of $G$ is incident to a vertex.
Each vertex has at most 3 incident edges or legs, so $3|V_G|\leq n+2|E_G|$.
It follows that $|E_G| \leq 3g-3+n$. There are only finitely many isomorphism classes of graphs of bounded edge number.
Hence, there are also only finitely many isomorphism classes of forested graphs of rank $g$ with $n$ legs. The relations in Definition~\ref{def:fgc} guarantee that each such isomorphism class contributes at most one generator to $\F_{g,n}^{\pm}$.
\end{proof}

The vector spaces $\F_{g,n}^{\pm}$ are graded by the number of edges in the forest. Let 
$C_k(\F_{g,n}^{\pm})$ be the respective subspace restricted to generators with $k$ forest edges.
These spaces form a chain complex, so we will refer to $\F_{g,n}^\pm$ as the \emph{forested graph complex} with $(\pm)$ orientation.
We will not explicitly state the boundary maps, $\partial_k: C_k(\F_{g,n}^{\pm})\rightarrow C_{k-1} (\F_{g,n}^{\pm})$ here (see,~\cite{CoVo}), as knowledge of the dimensions of these chain groups suffices for our Euler characteristic considerations.

The following theorem is a consequence of the works of Culler--Vogtmann~\cite{CV}, Kontsevich~\cite{Ko1,Ko2} and Conant--Vogtmann \cite{CoVo} (see in particular \cite[Sec.~3.1--3.2]{CoVo}):

\begin{theorem}
\label{thm:forested_graph_complex}
For $g > 0$,  $n\geq 0$ and $2g-2+n > 0$,
the chain complexes $\F_{g,n}^{+}$ ($\F_{g,n}^{-}$)
compute the homology of $\MG_{g,n}$ with trivial coefficients $\Q$ (with twisted coefficients $\tQ$). 
The $\SG_n$-action on $H^\bullet(\MG_{g,n},\Q)$ ($H^\bullet(\MG_{g,n},\tQ)$) descents obviously to $\F_{g,n}^+$ ($\F_{g,n}^-$) by permuting  leg-labels.
\end{theorem}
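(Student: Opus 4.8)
The plan is to realize $\MG_{g,n}$, up to rational (co)homology, as the quotient of a contractible complex by a proper action of $\PA_{g,n}$, and then to identify the forested graph complexes with the cellular chains of that quotient, keeping careful track of orientations in the two cases $\Q_\rho \in \{\Q,\tQ\}$.

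First I would invoke the Culler--Vogtmann theory \cite{CV}: the relevant version of Outer space (here the variant with $n$ legs) is contractible and carries a proper action of $\PA_{g,n}$ with finite stabilizers, so that $\MG_{g,n}$ is a rational classifying space and $H^\bullet(\MG_{g,n};\Q)\cong H^\bullet(\PA_{g,n};\Q)$. Inside Outer space sits an equivariant deformation retract, the \emph{spine}, which admits a cube-complex structure whose cubes are indexed by pairs $(G,\Phi)$ of a marked graph and a forest $\Phi\subset G$, with the dimension of the cube equal to $|\Phi|$. The boundary faces of such a cube arise either by collapsing forest edges, passing to a graph of smaller forest number, or by promoting forest edges to ordinary edges; this is precisely the combinatorics encoded by the boundary map $\partial_k$ of the forested graph complex.

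Next I would pass to the quotient by $\PA_{g,n}$. Since the coefficients are rational and the stabilizers are finite, the equivariant cellular chain complex computes $H^\bullet(\MG_{g,n};\Q_\rho)$, and a cube $(G,\Phi)$ descends to a single generator of $C_{|\Phi|}(\F_{g,n}^\pm)$ unless some automorphism of $(G,\Phi)$ acts by $-1$ on the orientation data, in which case the generator vanishes --- exactly the effect of the relations in Definition~\ref{def:fgc}. Here the two cases diverge according to the coefficient system. Orienting a cube amounts to ordering its edge-directions, that is, ordering the forest edges up to even permutation, which yields the $\sign(\pi)$ relation and identifies the $(+)$-complex with trivial coefficients. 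For the twisted system $\tQ$, the determinant of the composite $\PA_{g,n}\to\Out(F_g)\to\GL_g(\Z)$, one must additionally record a basis $\sigma_{H_1}$ of $H_1(G;\Z)$ and twist by $\det\rho_{H_1}$ under a change of basis; this produces the $(-)$-complex and matches the local system on $\MG_{g,n}$ as indicated by Kontsevich \cite{Ko1}.

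The identification with the named works would then be assembled: contractibility and the cellular structure on the spine are due to Culler--Vogtmann and Kontsevich \cite{CV,Ko1,Ko2}, while the precise passage to the forested graph complex with its sign conventions is carried out by Conant--Vogtmann \cite[Sec.~3.1--3.2]{CoVo}. Finally, the $\SG_n$-action permuting leg-labels commutes with every step --- it acts on marked graphs, preserves forests and the cube structure, and is compatible with both orientation conventions --- so it descends to an action on $\F_{g,n}^\pm$ realizing the $\SG_n$-module structure on cohomology. I expect the main obstacle to be the orientation bookkeeping: verifying that the cube-orientation local system on the spine agrees with the ordering-of-forest-edges convention, and, in the twisted case, that the determinant-of-$H_1$ twist is genuinely the pullback of the $\det$ local system rather than an unrelated sign, including the check that cells with orientation-reversing stabilizers drop out consistently over $\Q$.
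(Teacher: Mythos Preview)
Your proposal is correct and aligns with the paper's treatment: the paper does not give its own proof of this theorem but simply attributes it to Culler--Vogtmann, Kontsevich, and Conant--Vogtmann, pointing specifically to \cite[Sec.~3.1--3.2]{CoVo}. Your sketch --- spine of Outer space as a cube complex indexed by marked forested graphs, passage to the quotient with rational coefficients and finite stabilizers, and the orientation bookkeeping distinguishing the $(\pm)$ cases --- is exactly the content of those references, so you have essentially reconstructed the argument the paper defers to.
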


In what follows, we will use this theorem to prove a formula for 
$e_{\SG_n}(\MG_{g,n})$ and $e^\mathrm{odd}_{\SG_n}(\MG_{g,n})$.

\subsection{Equivariant Euler characteristics}

Recall that a permutation $\pi \in \SG_n$ factors uniquely as a product of disjoint cycles. 
If the orders of these cycles are $\lambda_1, \ldots, \lambda_\ell$, 
then %
$(\lambda_1,\ldots,\lambda_\ell)$ is a partition of $n$ called the \emph{cycle type} of $\pi$. %
For such a permutation $\pi \in \SG_n$, we define the \emph{power sum symmetric polynomial},
$p^\pi = p_{\lambda_1} \cdots p_{\lambda_\ell} \in \Lambda_n$
with $p_k=\sum_{i=1}^n x_i^k$. %
The  \emph{Fr\"obenius characteristic} is a symmetric polynomial associated uniquely to a $\SG_n$-representation $V$. It is defined by 
\begin{align*}  \ch(\chi_{V}) = \frac{1}{n!} \sum_{\pi \in \SG_n} \chi_{V}(\pi) p^\pi, \end{align*}
where $\chi_V$ is the \emph{character} associated to $V$ (see,~e.g.,~\cite[\S7.18]{stanley1997enumerative2}). 
The
\emph{$\SG_n$-equivariant Euler characteristic} of $\F_{g,n}^\pm$ is the alternating sum over the Fr\"obenius characteristics of $H_k(\F_{g,n}^\pm;\Q)$. Note that this is consistent with the definition of $e_{\SG_n}(\MG_{g,n})$ in eq.~\eqref{eq:Sndef} since $\ch(\chi_{V_\lambda}) = s_\lambda$.

As the spaces $\F_{g,n}^\pm$ are finite by  Lemma~\ref{lmm:finite},
we can compute the equivariant Euler characteristic 
on the chain level:
\begin{proposition}
\label{prop:equivC}
The $\SG_n$-equivariant Euler characteristic of $\F_{g,n}^\pm$ is given by 
\begin{align*} e_{\SG_n}(\F_{g,n}^\pm) &= \sum_k (-1)^k \ch(\chi_{C_k(\F^\pm_{g,n})}), \end{align*}
where $\ch(\chi_{C_k(\F^\pm_{g,n})})$
is the Fr\"obenius characteristic of $C_k(\F^\pm_{g,n})$ as an $\SG_n$-representation,
\begin{align*} \ch(\chi_{C_k(\F^\pm_{g,n})}) = \frac{1}{n!} \sum_{\pi \in \SG_n} \chi_{C_k(\F^\pm_{g,n})}(\pi) p^\pi, \end{align*}
and $\chi_{C_k(\F^\pm_{g,n})}$ 
is the \emph{character} associated to $C_k(\F^\pm_{g,n})$.
\end{proposition}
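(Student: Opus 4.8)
The plan is to reduce this to the classical Euler characteristic argument, carried out $\SG_n$-equivariantly one power sum at a time. The starting point is that the boundary maps $\partial_k : C_k(\F_{g,n}^\pm) \to C_{k-1}(\F_{g,n}^\pm)$ commute with the $\SG_n$-action, since that action merely relabels legs and thereby permutes generators while preserving the forest grading and the differential (Theorem~\ref{thm:forested_graph_complex}). Consequently each chain group $C_k(\F_{g,n}^\pm)$ is a genuine $\SG_n$-representation, as are every cycle space $Z_k$, every boundary space $B_k$, and every homology group $H_k(\F_{g,n}^\pm)$; all of these are finite dimensional by Lemma~\ref{lmm:finite}, so their characters and traces are well defined and the relevant sums are finite.

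First I would fix a permutation $\pi \in \SG_n$ and establish the equivariant Hopf trace identity
\begin{align*}
\sum_k (-1)^k \Tr\big(\pi \mid C_k(\F_{g,n}^\pm)\big) = \sum_k (-1)^k \Tr\big(\pi \mid H_k(\F_{g,n}^\pm)\big).
\end{align*}
For this I would use the two families of short exact sequences of $\SG_n$-representations $0 \to Z_k \to C_k \to B_{k-1} \to 0$ and $0 \to B_k \to Z_k \to H_k \to 0$. Because we work over $\Q$, Maschke's theorem guarantees that these sequences split $\SG_n$-equivariantly, so traces are additive across them: $\Tr(\pi \mid C_k) = \Tr(\pi \mid Z_k) + \Tr(\pi \mid B_{k-1})$ and $\Tr(\pi \mid Z_k) = \Tr(\pi \mid B_k) + \Tr(\pi \mid H_k)$. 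Substituting and forming the alternating sum, the boundary contributions telescope and cancel, leaving exactly the identity above.

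With the trace identity in hand, the proposition follows from the $\Q$-linearity of the Fröbenius characteristic in the character. Writing $\chi_{C_k(\F^\pm_{g,n})}(\pi) = \Tr(\pi \mid C_k)$ and likewise for homology, I would interchange the finite sums over $k$ and over $\pi \in \SG_n$:
\begin{align*}
\sum_k (-1)^k \ch\big(\chi_{C_k(\F^\pm_{g,n})}\big) = \frac{1}{n!} \sum_{\pi \in \SG_n} \Big( \sum_k (-1)^k \Tr(\pi \mid C_k) \Big) p^\pi = \frac{1}{n!} \sum_{\pi \in \SG_n} \Big( \sum_k (-1)^k \Tr(\pi \mid H_k) \Big) p^\pi,
\end{align*}
and the right-hand side is, by definition, the alternating sum of Fröbenius characteristics of the homology groups, that is $e_{\SG_n}(\F_{g,n}^\pm)$.

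The only genuinely delicate point is the $\SG_n$-equivariance of the splittings used in the trace identity; everything else is bookkeeping. This is exactly where the characteristic-zero hypothesis enters, and it is the step I would state most carefully: the naive non-equivariant Hopf trace formula only recovers $e(\F_{g,n}^\pm)$ and is not by itself enough to conclude the statement at the level of symmetric polynomials, so the argument must be run character-by-character and only then summed against the $p^\pi$.
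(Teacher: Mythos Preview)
Your argument is correct and is precisely the standard Hopf trace reasoning the paper has in mind; the paper itself does not give a proof at all, merely the sentence ``As the spaces $\F_{g,n}^\pm$ are finite by Lemma~\ref{lmm:finite}, we can compute the equivariant Euler characteristic on the chain level'' before stating the proposition.

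One small simplification: you do not actually need Maschke's theorem or an equivariant splitting. For any short exact sequence $0 \to A \to B \to C \to 0$ of finite-dimensional vector spaces with a compatible endomorphism $\pi$, additivity $\Tr(\pi\mid B)=\Tr(\pi\mid A)+\Tr(\pi\mid C)$ holds unconditionally, since one may pick a basis of $B$ extending a basis of $A$ and the matrix of $\pi$ becomes block upper triangular. So the ``delicate point'' you flag is in fact not delicate, and the characteristic-zero hypothesis is not needed for this step (it enters elsewhere, e.g.\ in identifying $\SG_n$-invariants with a summand). Otherwise your write-up is a clean and complete justification of what the paper leaves implicit.
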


\begin{corollary}
\label{cor:MGF}
For $g > 0$,  $n\geq 0$ and $2g-2+n > 0$,
\begin{align*} e_{\SG_n}(\MG_{g,n}) &= e_{\SG_n}(\F_{g,n}^+) &\text{ and }&& e^\mathrm{odd}_{\SG_n}(\MG_{g,n}) &= e_{\SG_n}(\F_{g,n}^-) . \end{align*}
\end{corollary}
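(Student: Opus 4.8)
The plan is to read off both sides as alternating sums of Fröbenius characteristics and then to identify them term by term using Theorem~\ref{thm:forested_graph_complex}. By definition, $e_{\SG_n}(\F_{g,n}^{+})$ is the alternating sum $\sum_k (-1)^k \ch(\chi_{H_k(\F_{g,n}^{+};\Q)})$ of the Fröbenius characteristics of the \emph{homology} of the complex, whereas $e_{\SG_n}(\MG_{g,n})$, by eq.~\eqref{eq:Sndef} together with the identity $\ch(\chi_{V_\lambda}) = s_\lambda$, equals $\sum_k (-1)^k \ch(\chi_{H^k(\MG_{g,n};\Q)})$, the same shape of sum but formed from \emph{cohomology}. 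So the task splits into two parts: matching the underlying $\SG_n$-representations, and reconciling homology with cohomology. The odd case is handled by the same two steps with $\tQ$ replacing $\Q$.

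The first part is exactly the content of Theorem~\ref{thm:forested_graph_complex}: the chain complex $\F_{g,n}^{+}$ (resp.\ $\F_{g,n}^{-}$) computes $H_\bullet(\MG_{g,n};\Q)$ (resp.\ $H_\bullet(\MG_{g,n};\tQ)$), and the $\SG_n$-action by permuting leg-labels on the complex agrees with the action on the (co)homology of $\MG_{g,n}$. Consequently $H_k(\F_{g,n}^{+};\Q) \cong H_k(\MG_{g,n};\Q)$ and $H_k(\F_{g,n}^{-};\Q) \cong H_k(\MG_{g,n};\tQ)$ as $\SG_n$-representations, not merely as vector spaces. All spaces involved are finite dimensional by Lemma~\ref{lmm:finite}, so these isomorphisms are of honest finite-dimensional $\SG_n$-modules.

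The second part is the passage from $H_k$ to $H^k$. Since we work over $\Q$ with finite-dimensional (co)homology, the universal coefficient theorem gives $H^k(\MG_{g,n};\Q_\rho) \cong H_k(\MG_{g,n};\Q_\rho)^{\ast}$ as $\SG_n$-representations for $\Q_\rho \in \{\Q,\tQ\}$. Now every element of $\SG_n$ is conjugate to its inverse (a permutation and its inverse share a cycle type), so every character satisfies $\chi(\pi)=\chi(\pi^{-1})$; hence any $\SG_n$-representation $V$ and its dual $V^{\ast}$ have equal characters and therefore equal Fröbenius characteristics, giving $\ch(\chi_{H^k}) = \ch(\chi_{H_k})$. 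Chaining the identities yields $e_{\SG_n}(\MG_{g,n}) = \sum_k (-1)^k \ch(\chi_{H^k(\MG_{g,n};\Q)}) = \sum_k (-1)^k \ch(\chi_{H_k(\F_{g,n}^{+};\Q)}) = e_{\SG_n}(\F_{g,n}^{+})$, and the verbatim argument with $\tQ$ in place of $\Q$ and $\F_{g,n}^{-}$ in place of $\F_{g,n}^{+}$ gives the odd statement. I expect no genuine obstacle here; the only point that requires care is bookkeeping the $\SG_n$-equivariance throughout, namely verifying that the coefficient-theorem duality and the self-duality step are isomorphisms of representations so that the Fröbenius characteristics, and not just the dimensions, agree.
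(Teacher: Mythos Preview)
Your argument is correct and follows the same approach as the paper, which simply records that the statement follows directly from Theorem~\ref{thm:forested_graph_complex}. You have made explicit the one point the paper leaves implicit, namely the passage between homology and cohomology via rational duality and the self-duality of $\SG_n$-representations; this is a harmless elaboration rather than a different route.
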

\begin{proof}
Follows directly from Theorem~\ref{thm:forested_graph_complex}.
\end{proof}

The other discussed Euler characteristics of $\MG_{g,n}$ can 
be obtained by evaluating the polynomials $e_{\SG_n}(\F_{g,n}^\pm)$ 
for certain values of $\bb p=p_1,p_2,\ldots$.
We write $\bb p = \bb 1$ for the specification $p_1=1$, $p_2=p_3=\ldots=0$
and $\bb p = \boldsymbol 1$ for the specification $p_1=p_2=\ldots=1$.
\begin{proposition}
\label{prop:MGF-special}
For $g > 0$,  $n\geq 0$ and $2g-2+n > 0$,
\begin{align*} e(\MG_{g,n}) &= n! \cdot e_{\SG_n}(\F_{g,n}^+) |_{\bb p = \bb 1}, & e^\mathrm{odd}(\MG_{g,n}) &= n! \cdot e_{\SG_n}(\F_{g,n}^-) |_{\bb p = \bb 1},\\
e(\MG_{g,n}^{\SG_n}) &= e_{\SG_n}(\F_{g,n}^+) |_{\bb p = \boldsymbol 1}, & e^\mathrm{odd}(\MG_{g,n}^{\SG_n}) &= e_{\SG_n}(\F_{g,n}^-) |_{\bb p = \boldsymbol 1}. \end{align*}
\end{proposition}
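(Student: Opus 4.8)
The plan is to derive all four identities by specializing the single symmetric-polynomial formula of Proposition~\ref{prop:equivC} at the two evaluations $\bb p = \bb 1$ and $\bb p = \boldsymbol 1$, and then to reinterpret the specialized quantities as ordinary Euler characteristics. Throughout I would work with the $(+)$-case; the $(-)$-case is identical word for word after replacing $\Q$ by $\tQ$ and invoking the twisted half of Theorem~\ref{thm:forested_graph_complex}. Recall that by Lemma~\ref{lmm:finite} all chain groups $C_k(\F^\pm_{g,n})$ are finite dimensional, so Euler characteristics may be computed on the chain level.

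First I would evaluate the power sums. For $\pi \in \SG_n$ of cycle type $(\lambda_1,\ldots,\lambda_\ell)$ we have $p^\pi = p_{\lambda_1}\cdots p_{\lambda_\ell}$. Under $\bb p = \bb 1$ (that is, $p_1=1$ and $p_k=0$ for $k\geq 2$) this product vanishes unless every part $\lambda_i$ equals $1$, i.e.\ unless $\pi = \mathrm{id}$, in which case $p^{\mathrm{id}} = p_1^n = 1$; under $\bb p = \boldsymbol 1$ (all $p_k=1$) every factor is $1$, so $p^\pi = 1$ for all $\pi$. Substituting into the character formula of Proposition~\ref{prop:equivC} gives
\begin{align*} e_{\SG_n}(\F_{g,n}^+)|_{\bb p = \bb 1} &= \frac{1}{n!}\sum_k (-1)^k \chi_{C_k(\F^+_{g,n})}(\mathrm{id}) = \frac{1}{n!}\sum_k (-1)^k \dim C_k(\F^+_{g,n}),\\ e_{\SG_n}(\F_{g,n}^+)|_{\bb p = \boldsymbol 1} &= \sum_k (-1)^k \Big(\tfrac{1}{n!}\sum_{\pi\in\SG_n}\chi_{C_k(\F^+_{g,n})}(\pi)\Big) = \sum_k (-1)^k \dim C_k(\F^+_{g,n})^{\SG_n}, \end{align*}
where in the first line I use $\chi_V(\mathrm{id}) = \dim V$ and in the second the standard fact that $\tfrac{1}{n!}\sum_{\pi}\chi_V(\pi) = \dim V^{\SG_n}$, the trace of the averaging projector onto invariants.

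Next I would convert these chain-level quantities into topological Euler characteristics. For the $\bb p = \bb 1$ evaluation, the alternating sum $\sum_k(-1)^k\dim C_k(\F^+_{g,n})$ is the Euler characteristic of the complex, hence equals $\sum_k (-1)^k \dim H_k(\F^+_{g,n};\Q)$; by Theorem~\ref{thm:forested_graph_complex} and the equality of Betti numbers in homology and cohomology over $\Q$, this is $e(\MG_{g,n})$, so multiplying by $n!$ yields the first identity. For the $\bb p = \boldsymbol 1$ evaluation I would pass to the invariant subcomplex $C_\bullet(\F^+_{g,n})^{\SG_n}$, whose Euler characteristic is $\sum_k(-1)^k\dim C_k(\F^+_{g,n})^{\SG_n}$. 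Since $\Q[\SG_n]$ is semisimple in characteristic zero, the invariants functor is exact, so the homology of this subcomplex is $H_\bullet(\F^+_{g,n};\Q)^{\SG_n} \cong H_\bullet(\MG_{g,n};\Q)^{\SG_n}$, and the alternating sum equals $\sum_k(-1)^k\dim H^k(\MG_{g,n};\Q)^{\SG_n} = e(\MG_{g,n}^{\SG_n})$, giving the third identity.

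The computation is essentially a routine substitution; the one point that must be handled with care is the $\bb p = \boldsymbol 1$ case. There it is not enough to know the invariant dimensions of the chain groups: one must know that taking $\SG_n$-invariants commutes with passing to homology, so that the invariant chain Euler characteristic genuinely computes the invariant part of $H_\bullet(\MG_{g,n})$. This is precisely where semisimplicity of $\Q[\SG_n]$ — i.e.\ working over $\Q$ rather than over a field of positive characteristic — is used. Everything else reduces to the elementary identities $\chi_V(\mathrm{id}) = \dim V$ and $\dim V^{\SG_n} = \tfrac{1}{n!}\sum_\pi \chi_V(\pi)$, which (via $\ch(\chi_{V_\lambda}) = s_\lambda$) also recover the descriptions from the introduction, namely $s_\lambda|_{\bb p = \bb 1} = \dim V_\lambda/n!$ and $s_\lambda|_{\bb p = \boldsymbol 1} = \dim V_\lambda^{\SG_n}$, the latter being $1$ for $\lambda=(n)$ and $0$ otherwise.
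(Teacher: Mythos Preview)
Your proof is correct and follows essentially the same approach as the paper's: both specialize the Frobenius-characteristic formula of Proposition~\ref{prop:equivC} using $\chi_V(\id)=\dim V$ and $\tfrac{1}{n!}\sum_\pi\chi_V(\pi)=\dim V^{\SG_n}$, and then identify the result with the topological Euler characteristics via Theorem~\ref{thm:forested_graph_complex} (the paper packages this last step as Corollary~\ref{cor:MGF}). Your version is somewhat more explicit, in particular spelling out the role of semisimplicity of $\Q[\SG_n]$ in commuting invariants with homology, which the paper leaves implicit.
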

\begin{proof}
Recall the relationship between the character of the 
$\SG_n$-representation $C_k(\F_{g,n}^\pm)$ and the symmetric 
polynomials $e_{\SG_n}(\F_{g,n}^\pm)$ from 
Proposition~\ref{prop:equivC}.
Observe that
$\chi_{V}(\id) = \dim V$, where $\id$ is the trivial permutation.
To verify the first two equations, observe that substituting $p_1=1$ and $p_2=p_3=\ldots=0$,
in the formula for 
$ \ch(\chi_{C_k(\F^\pm_{g,n})}) $
of Proposition~\ref{prop:equivC}
amounts to restricting the sum over $\pi$ to $\pi = \id$,
as only the trivial permutation has cycle-type $(1,1,\ldots,1)$.
For the second line recall that 
$\frac{1}{n!} \sum_{\pi \in \SG_n} \chi_{V}(\pi) = \dim V^{\SG_n},$
where $V^{\SG_n}$ is the $\SG_n$-invariant subspace of $V$.
Additionally, use Corollary~\ref{cor:MGF}.
\end{proof}

To explicitly compute $e_{\SG_n}(\F^\pm_{g,n})$,
we hence need to compute the Fr\"obenius characteristic of the chain groups and to compute those we need to know the character $\chi_{C_k(\F^\pm_{g,n})}$.
To compute these characters, we need a more detailed description of the chain groups $C_k(\F^\pm_{g,n})$ that are generated only by 
\emph{orientable forested graphs}.

\subsection{Orientable forested graphs}
\label{sec:orient}

Roughly, the relations in Definition~\ref{def:fgc} have the effect that a chosen ordering of the forest edges or a chosen homology basis, \emph{only matters up to its sign or its orientation}. We can therefore think of the generators as \emph{oriented} forested graphs.
Due to the relations, every forested graphs gives rise to at most one generator of $\F_{g,n}^{+}$ or $\F_{g,n}^{-}$.
However, not every forested graph is \emph{orientable} in this way, i.e.~gives rise to a generator. 
Even though Definition~\ref{def:fgc} only involves connected forested graphs, we will define the notion of orientability here also for disconnected graphs. We will need it later.

Let $\Aut(G,\Phi)$ be the group of automorphisms of the forested graph $(G,\Phi)$.
Each automorphism is required to fix the leg-labels and the forest.
For instance, the graph in Figure~\ref{fig:fg312} has one automorphism of order two that mirrors the graph on the horizontal axis.
The graph in Figure~\ref{fig:fg231} has one automorphism that flips the two doubled edges 
and the graph in Figure~\ref{fig:fg143} has no (leg-label-preserving) automorphisms.

Each $\alpha \in \Aut(G,\Phi)$ induces a permutation, $\alpha_\Phi:E_\Phi \rightarrow E_\Phi$ on the set of forest edges and an automorphism 
on the homology groups $\alpha_{H_0}: H_0(G,\Z)\rightarrow H_0(G,\Z)$ and $\alpha_{H_1}:H_1(G,\Z)\rightarrow H_1(G,\Z)$.
For each (possibly disconnected) forested graph $(G,\Phi)$ with an automorphism $\alpha \in \Aut(G,\Phi)$,
we define 
\begin{align*} \xi^+(G,\Phi,\alpha)= \sign(\alpha_\Phi) \text{ and } \xi^-(G,\Phi,\alpha)= \det(\alpha_{H_0}) \det(\alpha_{H_1}) \sign(\alpha_\Phi). \end{align*}
As connected graphs come with a canonical basis for $H_0(G,\Z)= \Z$ that cannot be changed by automorphisms, we always have  $\det(\alpha_{H_0}) = 1$
for them.
The other sign factors capture the signs of the relations in Definition~\ref{def:fgc} if the $(\pm)$-marking of the graph $(G,\Phi)$ is acted upon using $\alpha$ in the obvious way.
\begin{definition}
\label{def:orient}
A forested graph $(G,\Phi)$ is
\emph{$(\pm)$-orientable} if it has no
automorphism $\alpha \in \Aut(G,\Phi)$ for which $\xi^\pm(G,\Phi,\alpha)=-1$.
\end{definition}
For example, the automorphism of the forested graph in Figure~\ref{fig:fg312} 
switches two forest edges and flips the orientation of a chosen homology basis. 
Hence, the graph is $(-)$-, but not $(+)$-orientable.
The automorphism of the graph in Figure~\ref{fig:fg231} does not affect its subforest, but
flips the orientation of its homology basis. So, it is $(+)$-, but not $(-)$-orientable. 
The forested graph in Figure~\ref{fig:fg143} is both $(+)$- and $(-)$-orientable as it has no nontrivial automorphism.
There are also forested graphs that are neither $(+)$- nor $(-)$-orientable. 

\begin{proposition}
\label{prop:genorient}
The chain groups $C_k(\F^\pm_{g,n})$ are freely generated by isomorphism classes of connected $(\pm)$-orientable forested graphs of rank $g$ with $n$ legs 
and $k$ forest edges.
\end{proposition}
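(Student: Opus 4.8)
The plan is to decompose the space of marked generators according to the isomorphism class of the underlying forested graph, and to analyze each class separately. Fix a connected admissible forested graph $(G,\Phi)$ of rank $g$ with $n$ legs and $k$ forest edges. Every generator of $C_k(\F^\pm_{g,n})$ is represented by a marking of some such $(G,\Phi)$, and no relation in Definition~\ref{def:fgc} identifies generators supported on non-isomorphic underlying forested graphs. So I would first record that $C_k(\F^\pm_{g,n})$ is the direct sum, over isomorphism classes $[G,\Phi]$, of the subspace spanned by the markings of $(G,\Phi)$ modulo the relations; distinct classes cannot interact, which will give linear independence at the end.

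Next I would show that, for a fixed $(G,\Phi)$, the markings collapse to a one-dimensional \emph{orientation line} $L^\pm$. In the $(+)$-case the orderings $\sigma_\Phi$ form a torsor under $\SG_{|\Phi|}$, and the relation $(G,\Phi,\pi\circ\sigma_\Phi)\sim\sign(\pi)(G,\Phi,\sigma_\Phi)$ makes any two orderings proportional with ratio $\pm1$; hence $L^+$ is spanned by a single $(G,\Phi,\sigma_\Phi)$ and is at most one-dimensional. In the $(-)$-case one adds the homology basis $\sigma_{H_1}$, ranging over a torsor under $\GL_{h_1(G)}(\Z)$, and the relation involving $\det\rho_{H_1}$ again makes any two choices proportional with ratio $\pm1$, so $L^-$ is likewise at most one-dimensional.

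The heart of the argument is the effect of the isomorphism relation for an automorphism $\alpha\in\Aut(G,\Phi)$. Transporting a marking along $\alpha$ replaces $\sigma_\Phi$ by $\sigma_\Phi\circ\alpha_\Phi^{-1}$ (and, in the $(-)$-case, $\sigma_{H_1}$ by $\sigma_{H_1}\circ\alpha_{H_1}^{-1}$). The first of these equals $\pi\circ\sigma_\Phi$ with $\pi=\sigma_\Phi\,\alpha_\Phi^{-1}\,\sigma_\Phi^{-1}$, and $\sign(\pi)=\sign(\alpha_\Phi)$ since conjugation by the bijection $\sigma_\Phi$ preserves cycle type; the second is a $\GL$-change of determinant $\det\alpha_{H_1}$. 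Feeding these through the sign/determinant relations of Definition~\ref{def:fgc} shows that the isomorphism relation forces any generator $x$ supported on $[G,\Phi]$ to satisfy $x=\xi^\pm(G,\Phi,\alpha)\cdot x$. Since $\alpha\mapsto\alpha_\Phi$, $\alpha\mapsto\alpha_{H_0}$, $\alpha\mapsto\alpha_{H_1}$ are homomorphisms (functoriality of homology) and $\sign,\det$ are multiplicative, $\alpha\mapsto\xi^\pm(G,\Phi,\alpha)$ is a character $\Aut(G,\Phi)\to\{\pm1\}$, and the contribution of the class is exactly the coinvariants $L^\pm_{\Aut(G,\Phi)}$ for this character.

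Finally I would invoke the elementary fact about coinvariants of a one-dimensional $\{\pm1\}$-representation: if $\xi^\pm(G,\Phi,\alpha)=1$ for all $\alpha$, then $L^\pm$ survives as a one-dimensional summand; if some $\alpha$ has $\xi^\pm(G,\Phi,\alpha)=-1$, then $x=-x$ forces $L^\pm=0$. By Definition~\ref{def:orient} the first case is precisely $(\pm)$-orientability, so the surviving one-dimensional summands are in bijection with isomorphism classes of connected $(\pm)$-orientable forested graphs of rank $g$ with $n$ legs and $k$ forest edges, and they are independent by the direct-sum decomposition. I expect the only delicate point to be the sign bookkeeping in the third step—verifying that transporting a marking along $\alpha$ and re-expressing it via the defining relations produces exactly the factor $\xi^\pm(G,\Phi,\alpha)$ and nothing more; for the $(-)$-case this includes confirming that the homology-basis torsor contributes precisely $\det\alpha_{H_1}$, with $\det\alpha_{H_0}=1$ for connected $G$ as noted before the statement.
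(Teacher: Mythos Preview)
Your proposal is correct and follows essentially the same approach as the paper: reduce to a single isomorphism class, observe that the marking relations collapse all markings to a single line, and then use that an automorphism $\alpha$ forces the relation $x=\xi^\pm(G,\Phi,\alpha)\,x$, which kills $x$ precisely when $(G,\Phi)$ is not $(\pm)$-orientable. Your phrasing via an orientation line $L^\pm$ and its $\Aut(G,\Phi)$-coinvariants for the character $\xi^\pm$ is a clean conceptual packaging of the same computation the paper carries out directly; the only cosmetic difference is that the paper writes $\sigma_\Phi\circ\alpha_\Phi$ where you write $\sigma_\Phi\circ\alpha_\Phi^{-1}$, which is immaterial since $\sign(\alpha_\Phi)=\sign(\alpha_\Phi^{-1})$.
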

\begin{proof}
Definition~\ref{def:fgc} guarantees that each isomorphism class of a connected forested graph $(G,\Phi)$ contributes at most one generator, as all different $(\pm)$-markings of $(G,\Phi)$ are related via a permutation $\pi \in \SG_{|\Phi|}$ (and $\rho_{H_1} \in \GL_{h_1(G)}(\Z)$).

It remains to be proven that $(G,\Phi)$ does contribute a free generator if and only if it is $(\pm)$-orientable.  We will only prove this for the $(+)$-orientation, as the $(-)$-orientation follows analogously.
Recall that for any isomorphism class of a $(+)$-marked graph $(G,\Phi,\sigma_\Phi)$, Definition~\ref{def:fgc} imposes the relation $(G,\Phi,\pi \circ \sigma_\Phi) \sim \sign(\pi) \cdot (G,\Phi,\sigma_\Phi)$ for each $\pi \in \SG_{|\Phi|}$.
As $(G,\Phi,\sigma_\Phi)$ is given up to isomorphism, we have $(G,\Phi,\sigma_\Phi \circ \alpha_\Phi) = (G,\Phi,\sigma_\Phi)$ for
each $\alpha \in \Aut(G,\Phi)$ with $\alpha_\Phi$ the permutation that $\alpha$ induces on $\Phi$.
If there is an automorphism $\alpha \in \Aut(G,\Phi)$ such that $\alpha_\Phi$ is 
an odd permutation on $\Phi$, then the permutation $\pi:=\sigma_\Phi \circ \alpha_\Phi \circ \sigma_\Phi^{-1} \in \SG_{|\Phi|}$ is odd as well and we find 
that $ (G,\Phi,\sigma_\Phi)=(G,\Phi,\sigma_\Phi\circ \alpha_\Phi) = (G,\Phi, \pi \circ \sigma_\Phi) \sim - (G,\Phi, \sigma_\Phi)\sim 0$.
If there is no such automorphism, then for any $\alpha\in \Aut(G,\Phi)$,
$\sign (\sigma_\Phi \circ \alpha_\Phi \circ \sigma_\Phi^{-1})=1$. Hence, the sign of a $(+)$-orientable and $(+)$-marked forested graph is fixed by any automorphism which therefore gives a generator.
\end{proof}

\subsection{The action of \texorpdfstring{$\SG_n$}{Sn} on orientable graphs}
The next step for the explicit evaluation of $e_{\SG_n}(\F^\pm_{g,n})$ is to quantify the action of $\SG_n$ 
that permutes the legs
of the generators of $\F^\pm_{g,n}$. %

Let $\UAut(G,\Phi)$ be the set of automorphisms of a forested graph $(G,\Phi)$ with $n$ legs that are \emph{allowed} to permute the leg-labels.
For instance, for the graph in Figure~\ref{fig:fg143}, which has a trivial leg-label-preserving automorphism group,
the group $\UAut(G,\Phi)$ is generated by the automorphism that mirrors the graph vertically and permutes the 
leg-labels by $(12)(34)$.
We have a map $\pi_G: \UAut(G,\Phi) \rightarrow \SG_n$, given by only looking at the induced permutation on the leg-labels. The kernel of this map is equal to $\Aut(G,\Phi)$, the group of leg-label-fixing automorphisms of $(G,\Phi)$.

The $\SG_n$-action gives a representation $\rho: \SG_n \rightarrow \GL(C_k(\F^\pm_{g,n}))$. 
For a specific $\pi \in \SG_n$, the linear map $\rho(\pi)\in \GL(C_k(\F^\pm_{g,n}))$ replaces each generator of $C_k(\F^\pm_{g,n})$ with the respective generator where the leg labels are permuted by $\pi$.
The character of $C_k(\F^\pm_{g,n})$ is the composition of $\rho$ with the trace, $\chi_{C_k(\F^\pm_{g,n})} = \Tr \circ \rho$.
As $\rho(\pi)$ maps each generator to a multiple of another generator,
it is sufficient to look at generators that happen to be Eigenvectors of $\rho(\pi)$ to compute $\Tr(\rho(\pi))$.
Let $(G,\Phi,\sigma^\pm)$ be a connected $(\pm)$-orientable forested graph with a $(\pm)$-marking $\sigma^\pm$ corresponding to a generator of $C_k(\F^\pm_{g,n})$.
This generator is an Eigenvector of $\rho(\pi)$ if the forested graph $(G,\Phi)$ has a non-leg-label-fixing automorphism $\alpha \in \UAut(G,\Phi)$ such that $\pi_G(\alpha) = \pi$. 
The following lemma describes the Eigenvalue corresponding to such an Eigenvector. It is either $+1$ or $-1$.
\begin{lemma}
If, for given $\pi \in \SG_n$, the generator $(G,\Phi,\sigma^\pm) \in C_k(\F^\pm_{g,n})$ is an Eigenvector of $\rho(\pi)$, then
the corresponding Eigenvalue is $\xi^\pm(G,\Phi,\alpha)$,
where $\alpha$ is some representative $\alpha \in \pi_G^{-1}(\pi)$.
\end{lemma}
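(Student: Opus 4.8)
The plan is to unwind the definitions and track precisely what the linear map $\rho(\pi)$ does to the given generator. First I would fix a representative $\alpha \in \pi_G^{-1}(\pi)$, which exists by the hypothesis that $(G,\Phi,\sigma^\pm)$ is an Eigenvector of $\rho(\pi)$: the Eigenvector condition says exactly that permuting the leg-labels by $\pi$ sends the isomorphism class of $(G,\Phi)$ back to itself, i.e.\ there is an automorphism in $\UAut(G,\Phi)$ realizing $\pi$ on the legs. The map $\rho(\pi)$ replaces $(G,\Phi,\sigma^\pm)$ by the same underlying graph with leg-labels permuted by $\pi$; using $\alpha$ to identify this relabeled graph with the original $(G,\Phi)$, the generator is mapped to $(G,\Phi,\sigma^\pm \circ \alpha^{-1})$ (the marking transported along $\alpha$), which as a marked graph is isomorphic to the original but carries a possibly different marking.

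Next I would compute how the transported marking differs from the original in the $\F^\pm$-relations. In the $(+)$-case, $\alpha$ induces the permutation $\alpha_\Phi$ on the forest edges, so the new ordering differs from $\sigma_\Phi$ by $\alpha_\Phi$, and Definition~\ref{def:fgc} tells us this introduces the sign $\sign(\alpha_\Phi) = \xi^+(G,\Phi,\alpha)$. In the $(-)$-case one additionally transports the homology basis by $\alpha_{H_1}$, and since the graph is connected $\det(\alpha_{H_0})=1$, so the relation contributes $\sign(\alpha_\Phi)\cdot\det(\alpha_{H_1}) = \xi^-(G,\Phi,\alpha)$. In both cases this is precisely $\xi^\pm(G,\Phi,\alpha)$, so $\rho(\pi)$ acts on the generator as multiplication by that scalar, establishing the claimed Eigenvalue; that it equals $\pm 1$ follows because $\xi^\pm$ is a product of signs and a determinant of an integral automorphism, hence a root of unity that is in fact $\pm 1$.

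The main obstacle, and the point I would take most care with, is showing that the Eigenvalue does not depend on the choice of representative $\alpha \in \pi_G^{-1}(\pi)$, since the lemma only asserts existence of \emph{some} representative but the quantity must be well-defined on the generator. Two representatives differ by an element of the kernel $\Aut(G,\Phi)$ of $\pi_G$, i.e.\ by a leg-label-fixing automorphism $\beta$. Because $(G,\Phi,\sigma^\pm)$ is assumed $(\pm)$-orientable, Definition~\ref{def:orient} guarantees $\xi^\pm(G,\Phi,\beta)=+1$ for every such $\beta$; combined with the multiplicativity of $\xi^\pm$ in $\alpha$ (which follows from $\sign$ and $\det$ being homomorphisms on $\Aut(G,\Phi)\to\SG_{|\Phi|}$ and $\to\GL_{h_1}(\Z)$ respectively), this shows $\xi^\pm(G,\Phi,\alpha\beta)=\xi^\pm(G,\Phi,\alpha)$, so the value is indeed independent of the representative. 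I would present the multiplicativity of $\xi^\pm$ explicitly, as it is the structural fact that makes both the well-definedness and the Eigenvalue computation clean, and I would note that $(\pm)$-orientability is exactly the hypothesis ensuring the generator is nonzero so that "Eigenvector" is meaningful.
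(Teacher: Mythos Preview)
Your proposal is correct and follows essentially the same approach as the paper's proof: compute the effect of $\rho(\pi)$ on the marking via a chosen $\alpha\in\pi_G^{-1}(\pi)$, observe that the resulting sign is $\xi^\pm(G,\Phi,\alpha)$, and then argue independence of the representative using that $\alpha\mapsto\xi^\pm(G,\Phi,\alpha)$ is a homomorphism $\UAut(G,\Phi)\to\{\pm1\}$ which is trivial on $\ker\pi_G=\Aut(G,\Phi)$ by $(\pm)$-orientability. Your write-up is more explicit than the paper's (which compresses the computation into a sentence and cites Proposition~\ref{prop:genorient}), but the logical structure is the same.
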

\begin{proof}
By Proposition~\ref{prop:genorient}, $(G,\Phi)$ cannot have a leg-label-fixing automorphism that flips the orientation. However, an automorphism from the larger group $\UAut(G,\Phi)$ can change the sign of the orientation
(i.e.~the sign of the ordering and basis given by the $(\pm)$-marking in Definition~\ref{def:fgc} can be flipped).
The map $\rho(\pi)$ does so if $\xi^\pm(G,\Phi,\alpha)=-1$ for a representative of the kernel $\alpha \in \pi_G^{-1}(\pi)$. As $\alpha \mapsto \xi^\pm(G,\Phi,\alpha)$ gives a group homomorphism $\UAut(G,\Phi) \rightarrow \Z/2\Z$ and as $\xi^\pm(G,\Phi,\alpha)=1$ for all $\alpha \in \ker\pi_G$, it does not matter which representative we pick.
\end{proof}

Summing over all such Eigenvalues of $\rho(\pi)$ gives the value of 
$\Tr(\rho(\pi)) = \chi_{C_k(\F^{\pm}_{g,n})}(\pi)$: %

\begin{corollary}
\label{cor:trace}
For each $\pi \in \SG_n$, the character of $C_k(\F^\pm_{g,n})$ is
\begin{align*} \chi_{C_k(\F^{\pm}_{g,n})}(\pi) = \sum \xi^\pm(G,\Phi,\alpha_{(G,\Phi,\pi)}), \end{align*}
where we sum over all isomorphism classes of $(\pm)$-orientable forested graphs $(G,\Phi)$ 
with $k=|\Phi|$ forest edges,
for which the preimage $\pi_G^{-1}(\pi) \subset \UAut(G,\Phi)$ is nonempty 
(i.e.~$(G,\Phi)$ has at least one non-leg-label-fixing automorphism that permutes the leg labels by $\pi$),
and $\alpha_{(G,\Phi,\pi)}$ is some representative of such an automorphism in $\pi_G^{-1}(\pi) \subset \UAut(G,\Phi)$.
\end{corollary}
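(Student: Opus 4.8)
The plan is to compute the character directly as the trace of $\rho(\pi)$ in the distinguished basis provided by Proposition~\ref{prop:genorient}, exploiting the fact that $\rho(\pi)$ is a monomial (signed-permutation) matrix in this basis. First I would recall that $\chi_{C_k(\F^\pm_{g,n})}(\pi) = \Tr(\rho(\pi))$, and that by Proposition~\ref{prop:genorient} the chain group $C_k(\F^\pm_{g,n})$ has a basis indexed by isomorphism classes of connected $(\pm)$-orientable forested graphs $(G,\Phi)$ with $k$ forest edges, each occurring with a fixed $(\pm)$-marking $\sigma^\pm$. Since $\rho(\pi)$ only permutes leg-labels, it sends the generator of $[(G,\Phi)]$ to a scalar times the generator of the relabeled class $[\pi\cdot(G,\Phi)]$; hence it is monomial and its trace equals the sum of the diagonal entries, i.e.\ the sum of the scalar coefficients on precisely those generators that $\rho(\pi)$ fixes up to a scalar.

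Second, I would characterize exactly which basis generators contribute to the diagonal. The generator of $[(G,\Phi)]$ is fixed up to a scalar precisely when $[\pi\cdot(G,\Phi)] = [(G,\Phi)]$, i.e.\ when relabeling the legs of $(G,\Phi)$ by $\pi$ yields an isomorphic forested graph. This is equivalent to the existence of an allowed automorphism inducing $\pi$ on the legs, that is, to $\pi_G^{-1}(\pi)\subset \UAut(G,\Phi)$ being nonempty. Generators for which this preimage is empty are sent by $\rho(\pi)$ to a scalar multiple of a \emph{different} basis generator and hence contribute nothing to the trace.

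Third, for each contributing generator I would read off the diagonal entry, which is exactly the Eigenvalue supplied by the preceding lemma: if $\pi_G^{-1}(\pi)\neq\emptyset$ and $\alpha\in\pi_G^{-1}(\pi)$ is any representative, then $\rho(\pi)$ acts on the generator by $\xi^\pm(G,\Phi,\alpha)\in\{\pm1\}$, and this value is independent of the choice of $\alpha$ because $\alpha\mapsto\xi^\pm(G,\Phi,\alpha)$ is a homomorphism $\UAut(G,\Phi)\to\Z/2\Z$ that is trivial on $\ker\pi_G=\Aut(G,\Phi)$. Summing these diagonal entries over all contributing isomorphism classes then produces the stated formula.

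The argument is essentially bookkeeping once the lemma is in hand, so I do not expect a serious obstacle; the only points requiring genuine care are (i) confirming that $\rho(\pi)$ really is monomial in the orientable-graph basis, so that off-diagonal terms cannot leak into the trace, and (ii) verifying the equivalence between ``relabeling by $\pi$ returns the same isomorphism class'' and ``$\pi_G^{-1}(\pi)$ is nonempty'', which is where the distinction between the leg-fixing group $\Aut(G,\Phi)$ and the larger group $\UAut(G,\Phi)$ must be used correctly.
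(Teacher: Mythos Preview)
Your proposal is correct and matches the paper's approach essentially line for line: the paper observes that $\rho(\pi)$ sends each generator to a scalar multiple of another generator, so only the eigenvectors contribute to the trace, and then invokes the preceding lemma to identify those eigenvalues as $\xi^\pm(G,\Phi,\alpha)$. Your points (i) and (ii) are exactly the observations the paper makes in the paragraph preceding the lemma, so nothing further is needed.
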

To continue, it is convenient to pass to a sum over all connected forested graphs without restrictions on the orientability to get better combinatorial control over the expression:
\begin{corollary}
\label{cor:chain_trace}
For $g,n \geq 0$ with $2g - 2 + n \geq 0$ and $k \geq 0$, we have %
\begin{align*} \chi_{C_k(\F^{\pm}_{g,n})}(\pi) = \sum_{[G,\Phi]} \frac{1}{|\Aut(G,\Phi)|} \sum_{\alpha \in \pi_G^{-1}(\pi)} \xi^\pm(G,\Phi,\alpha), \end{align*}
where we sum over all isomorphism classes of (not necessarily orientable) connected forested graphs $[G,\Phi]$ of rank $g$, $n$ legs and $k=|\Phi|$ forest edges.
\end{corollary}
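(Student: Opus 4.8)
The plan is to show that, term by term over isomorphism classes of connected forested graphs, the weighted sum on the right collapses onto the sum over orientable graphs in Corollary~\ref{cor:trace}. Fix an isomorphism class $[G,\Phi]$ of a connected forested graph of rank $g$ with $n$ legs and $k=|\Phi|$ forest edges, and abbreviate the inner expression as
\[
Q(G,\Phi) \;=\; \frac{1}{|\Aut(G,\Phi)|} \sum_{\alpha \in \pi_G^{-1}(\pi)} \xi^\pm(G,\Phi,\alpha).
\]
The two structural facts I would invoke are already available: the map $\pi_G\colon \UAut(G,\Phi)\to\SG_n$ is a group homomorphism with kernel exactly $\Aut(G,\Phi)$, and (as noted in the proof of the preceding lemma) $\alpha\mapsto \xi^\pm(G,\Phi,\alpha)$ is a homomorphism $\UAut(G,\Phi)\to\{\pm1\}$. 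So the whole argument reduces to understanding $Q(G,\Phi)$ on a single fiber of $\pi_G$.

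First I would dispose of the empty case: if $\pi_G^{-1}(\pi)=\emptyset$ then $Q(G,\Phi)=0$, and such graphs contribute to neither side. When $\pi_G^{-1}(\pi)$ is nonempty it is a single coset of $\ker\pi_G=\Aut(G,\Phi)$, hence has exactly $|\Aut(G,\Phi)|$ elements. I would then split according to orientability. If $(G,\Phi)$ is $(\pm)$-orientable, then $\xi^\pm\equiv 1$ on $\Aut(G,\Phi)$, so $\xi^\pm$ is constant on the coset $\pi_G^{-1}(\pi)$; choosing any representative $\alpha_0$ gives $\sum_{\alpha}\xi^\pm(G,\Phi,\alpha)=|\Aut(G,\Phi)|\cdot \xi^\pm(G,\Phi,\alpha_0)$, and dividing by $|\Aut(G,\Phi)|$ yields $Q(G,\Phi)=\xi^\pm(G,\Phi,\alpha_0)$, which is precisely the summand appearing in Corollary~\ref{cor:trace}.

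The remaining case is where $(G,\Phi)$ is not orientable but still has $\pi_G^{-1}(\pi)\neq\emptyset$; here the point is that the contribution must vanish. By non-orientability there is $\beta\in\Aut(G,\Phi)=\ker\pi_G$ with $\xi^\pm(G,\Phi,\beta)=-1$. Left multiplication by $\beta$ permutes the coset $\pi_G^{-1}(\pi)$ bijectively and, since $\xi^\pm$ is a homomorphism, sends each summand to its negative, whence $\sum_{\alpha}\xi^\pm(G,\Phi,\alpha)=-\sum_{\alpha}\xi^\pm(G,\Phi,\alpha)$ and $Q(G,\Phi)=0$. Assembling the three cases, the right-hand sum over all connected forested graphs retains exactly the orientable graphs with a nonempty $\pi$-fiber, each contributing $\xi^\pm(G,\Phi,\alpha_0)$ for a representative $\alpha_0$, which is exactly the expression of Corollary~\ref{cor:trace}.

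I expect no serious obstacle: the content is purely the coset-and-cancellation bookkeeping above. The one thing to be careful about is that $Q(G,\Phi)$ is genuinely an isomorphism invariant of $[G,\Phi]$, so that the sum over isomorphism classes is well defined; this follows because $|\Aut(G,\Phi)|$, the fiber $\pi_G^{-1}(\pi)$, and the values $\xi^\pm(G,\Phi,\alpha)$ all transport compatibly along isomorphisms of forested graphs, the latter because $\xi^\pm$ was defined in terms of $\sign(\alpha_\Phi)$ and $\det(\alpha_{H_1})$, which are preserved under conjugation by an isomorphism.
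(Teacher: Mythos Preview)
Your argument is correct and follows essentially the same approach as the paper: both use that $\xi^\pm$ is a homomorphism $\UAut(G,\Phi)\to\{\pm1\}$ and that $\pi_G^{-1}(\pi)$ is a coset of $\Aut(G,\Phi)$, so the fiber sum factors as $\xi^\pm(G,\Phi,\alpha')\sum_{\alpha\in\ker\pi_G}\xi^\pm(G,\Phi,\alpha)$, which is $|\Aut(G,\Phi)|\cdot\xi^\pm(G,\Phi,\alpha')$ or $0$ according to orientability. The paper compresses your orientable/non-orientable case split into this single factorization identity, but the content is identical; your extra remark on isomorphism-invariance of $Q(G,\Phi)$ is a reasonable sanity check that the paper leaves implicit.
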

\begin{proof}
As $\alpha \mapsto \xi^\pm(G,\Phi,\alpha)$ also gives a map $\Aut(G,\Phi)\rightarrow \Z/2\Z$, 
we have by Definition~\ref{def:orient}
\begin{align*} \sum_{\alpha \in \ker \pi_G} \xi^\pm(G,\Phi,\alpha) = \begin{cases} |\Aut(G,\Phi)|&\text{ if $(G,\Phi)$ is $(\pm)$-orientable} \\
0 & \text{ else} \end{cases} \end{align*}
The statement follows, as $ \sum_{\alpha \in \pi_G^{-1}(\pi)} \xi^\pm(G,\Phi,\alpha) = \xi^\pm(G,\Phi,\alpha') \sum_{\alpha \in \ker \pi_G} \xi^\pm(G,\Phi,\alpha),$
for any representative $\alpha' \in \pi_G^{-1}(\pi)$ and the formula from Corollary~\ref{cor:trace} for $\chi_{C_k(\F^{\pm}_{g,n})}(\pi)$.
\end{proof}

With this we finally obtain our first explicit formula for $e_{\SG_n}(\F^\pm_{g,n})$:
\begin{theorem}
For $g,n \geq 0$ with $2g - 2 + n \geq 0$, %
\label{thm:equiv}
\begin{align*} e_{\SG_n}(\F^\pm_{g,n}) &= \sum_{[G,\Phi]_U} \frac{(-1)^{|\Phi|}}{|\UAut(G,\Phi)|} \sum_{\alpha \in \UAut(G,\Phi)} \xi^\pm(G,\Phi,\alpha) p^{\pi_G(\alpha)}, \end{align*}
where we sum over all isomorphism classes of connected forested graphs $[G,\Phi]_U$ of rank $g$ with $n$ \emph{unlabeled} legs.

\end{theorem}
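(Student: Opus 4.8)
The plan is to assemble the formula directly from Proposition~\ref{prop:equivC} and Corollary~\ref{cor:chain_trace}, and then to perform a standard orbit-summation to pass from labeled to unlabeled legs. First I would substitute the expression for $\ch(\chi_{C_k(\F^\pm_{g,n})})$ from Proposition~\ref{prop:equivC} and the character formula from Corollary~\ref{cor:chain_trace} into $e_{\SG_n}(\F^\pm_{g,n}) = \sum_k (-1)^k \ch(\chi_{C_k(\F^\pm_{g,n})})$. Since the graphs contributing to $C_k$ have exactly $k = |\Phi|$ forest edges, the factor $(-1)^k$ becomes $(-1)^{|\Phi|}$ and can be absorbed into the graph sum, yielding
\[ e_{\SG_n}(\F^\pm_{g,n}) = \frac{1}{n!} \sum_{\pi \in \SG_n} \sum_{[G,\Phi]} \frac{(-1)^{|\Phi|}}{|\Aut(G,\Phi)|} \sum_{\alpha \in \pi_G^{-1}(\pi)} \xi^\pm(G,\Phi,\alpha)\, p^\pi, \]
where $[G,\Phi]$ ranges over all isomorphism classes of connected forested graphs of rank $g$ with $n$ \emph{labeled} legs and arbitrary forest size.

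Second, I would collapse the two outer sums. Since $p^\pi = p^{\pi_G(\alpha)}$ whenever $\alpha \in \pi_G^{-1}(\pi)$, running over all $\pi \in \SG_n$ and then over $\alpha \in \pi_G^{-1}(\pi)$ is the same as running once over all $\alpha \in \UAut(G,\Phi)$. This gives
\[ e_{\SG_n}(\F^\pm_{g,n}) = \frac{1}{n!} \sum_{[G,\Phi]} \frac{(-1)^{|\Phi|}}{|\Aut(G,\Phi)|} \sum_{\alpha \in \UAut(G,\Phi)} \xi^\pm(G,\Phi,\alpha)\, p^{\pi_G(\alpha)}. \]

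Third, and this is the crux, I would convert the labeled-leg sum into an unlabeled-leg sum. The group $\SG_n$ acts on the set of labeled isomorphism classes by permuting labels, and its orbits are exactly the unlabeled classes $[G,\Phi]_U$. A permutation stabilizes a given labeled class precisely when some automorphism in $\UAut(G,\Phi)$ induces it on the legs, so the stabilizer is $\im(\pi_G)$. Because $\pi_G \colon \UAut(G,\Phi) \to \SG_n$ has kernel $\Aut(G,\Phi)$, we have $|\im(\pi_G)| = |\UAut(G,\Phi)|/|\Aut(G,\Phi)|$, and orbit-stabilizer gives orbit size $n!\,|\Aut(G,\Phi)|/|\UAut(G,\Phi)|$. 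The observation that makes the bookkeeping close up is that the summand $T(G,\Phi) = (-1)^{|\Phi|}\sum_{\alpha}\xi^\pm(G,\Phi,\alpha)\,p^{\pi_G(\alpha)}$ is constant along each orbit: relabeling conjugates $\pi_G(\alpha)$, leaving its cycle type and hence $p^{\pi_G(\alpha)}$ unchanged, while $\UAut$, $\Aut$ and $\xi^\pm$ are already independent of the labeling. Summing $T/(n!\,|\Aut|)$ over a single orbit therefore contributes $(\text{orbit size})\cdot T/(n!\,|\Aut|) = T/|\UAut|$, and summing over all orbits gives the claimed formula.

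I expect the only genuine subtlety to be this third step, namely verifying that $T(G,\Phi)$ is orbit-invariant (which rests on $p^\pi$ depending only on the cycle type, a conjugation invariant) and that the stabilizer of a labeled class is exactly $\im(\pi_G)$. The first two steps are pure bookkeeping: substitution followed by a reindexing of the summation.
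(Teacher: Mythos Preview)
Your proposal is correct and follows essentially the same route as the paper's proof: substitute Corollary~\ref{cor:chain_trace} into Proposition~\ref{prop:equivC}, merge the $\pi$- and $\alpha$-sums into a single sum over $\UAut(G,\Phi)$, and then use orbit--stabilizer (with stabilizer $\im\pi_G$ and the short exact sequence $1\to\Aut\to\UAut\to\im\pi_G\to1$) to trade the labeled sum for the unlabeled one. The only cosmetic difference is that you absorb the alternating sign $(-1)^{|\Phi|}$ at the outset while the paper works degree by degree and takes the alternating sum at the end.
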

\begin{proof}
We can plug the statement of Corollary~\ref{cor:chain_trace} into the definition of the Fr\"obenius characteristic in Proposition~\ref{prop:equivC} to get
\begin{align*} \ch(\chi_{C_k(\F^\pm_{g,n})}) = \frac{1}{n!} \sum_{\pi \in \SG_n} p^\pi \sum_{[G,\Phi]} \frac{1}{|\Aut(G,\Phi)|} \sum_{\alpha \in \pi_G^{-1}(\pi)} \xi^\pm(G,\Phi,\alpha). \end{align*}
Next, we can merge the first and the third sum into a summation over the whole group $\UAut(G,\Phi)$, 
as the preimages of different $\pi \in \SG_n$ partition $\UAut(G,\Phi)$:
\begin{align*} \ch(\chi_{C_k(\F^\pm_{g,n})}) = \frac{1}{n!} \sum_{[G,\Phi]} \frac{1}{|\Aut(G,\Phi)|} \sum_{\alpha \in \UAut(G,\Phi)} \xi^\pm(G,\Phi,\alpha) p^{\pi_G(\alpha)} . \end{align*}
Let $L_S(G,\Phi)$ be the set of isomorphism classes of forested graphs $(G,\Phi)$ that only differ by a relabeling of the legs.
Obviously, $\SG_n$ acts transitively on $L_S(G,\Phi)$ by permuting the leg-labels.  The stabilizer of this action is the image of $\pi_G:\UAut(G,\Phi)\rightarrow \SG_n$.
By the orbit-stabilizer theorem, $n! = |\SG_n| = |\im \pi_G| |L_S(G,\Phi)|$.
By the short exact sequence, $1 \rightarrow \Aut(G,\Phi) \rightarrow \UAut(G,\Phi ) \rightarrow \im \pi_G \rightarrow 1$, we have $|\im \pi_G| = |\UAut(G,\Phi)|/ |\Aut(G,\Phi)|$. Hence,
\begin{align*} \ch(\chi_{C_k(\F^\pm_{g,n})}) = \sum_{[G,\Phi]} \frac{1}{|L_S(G,\Phi)| |\UAut(G,\Phi)|} \sum_{\alpha \in \UAut(G,\Phi)} \xi^\pm(G,\Phi,\alpha) p^{\pi_G(\alpha)} . \end{align*}
The terms in this sum do not depend on the leg-labeling of the graphs, so we can just sum over non-leg-labeled graphs and remove the $|L_S(G,\Phi)|$ in the denominator.
\end{proof}
\subsection{Disconnected forested graphs}

The expression for $e_{\SG_n}(\F^\pm_{g,n})$ in Theorem~\ref{thm:equiv} involves a sum over \emph{connected} forested graphs without leg-labels. To eventually get an effective generating function for $e_{\SG_n}(\F^\pm_{g,n})$, it is convenient to pass to an analogous formula that sums over \emph{disconnected} forested graphs. 
Moreover, it is helpful to change from grading the graphs by rank to grading them by their negative Euler characteristic. 
For connected graphs, this is just a trivial shift as 
the negative Euler characteristic of such graphs fulfills $\rank H_1(G,\Z) - \rank H_0(G,\Z) = g-1$.
So, we define for all $t \in \Z$ and $n \geq 0$,
\begin{align} \label{eq:equiv_disc} \ee_{t,n}^\pm = \sum_{[G,\Phi]_U} \frac{1}{|\UAut(G,\Phi)|} \sum_{\alpha \in \UAut(G,\Phi)} \xi^\pm(G,\Phi,\alpha) p^{\pi_G(\alpha)}, \end{align}
where we sum over all isomorphism classes of (possibly disconnected) forested graphs $[G,\Phi]_U$ of Euler characteristic $\rank H_0(G,\Z) - \rank H_1(G,\Z) = -t$ and $n$ unlabeled legs.
There is only a finite number of such graphs, so the sum is finite and $ \ee_{t,n}^\pm $
is a symmetric polynomial in $\Lambda_n$.

Let $\widehat \Lambda$ be the \emph{ring of formal symmetric power series}
$\widehat \Lambda = \lim_{\leftarrow n} \Q[[x_1,\ldots,x_n]]^{\SG_n}$.
Later in this section we will prove a generating function,
expressed as a power series with coefficients in $\widehat \Lambda$,
 for the polynomials $\ee_{t,n}^\pm$. 
Before that, we explain how we can translate them into our desired $\SG_n$-equivariant Euler characteristics:
\begin{proposition}
\label{prop:exp}
We define the following Laurent series over the ring of formal symmetric power series 
\begin{align*} \mathbf e^\pm(\hbar, \bb p) &= \sum_{\substack{ g,n \geq 0\\ 2g-2+n \geq 0}} e_{\SG_n}(\F^\pm_{g,n}) (\pm \hbar)^{g-1}, \\
\mathbf E^\pm(\hbar, \bb p) &= \sum_{t\in \Z} \sum_{n \geq 0} \ee_{t,n}^\pm (\pm \hbar)^t, \end{align*}
which are elements of $\widehat{ \Lambda}((\hbar))$.
Both are related by the \emph{plethystic exponential}
\begin{align*} \mathbf E^\pm(\hbar, \bb p) = \exp\left( \sum_{k \geq 1} \frac{\mathbf e^\pm(\hbar^k, \bb p_{[k]})}{k} \right), \end{align*}
where $\mathbf e^\pm(\hbar^k, \bb p_{[k]})$ denotes the power series $\mathbf e^\pm$ with the substitutions $\hbar \rightarrow \hbar^k$ and $p_{i} \rightarrow p_{ik}$.
\end{proposition}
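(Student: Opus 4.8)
The plan is to derive this as an instance of the exponential formula that relates connected structures to their disjoint unions, with the plethystic exponential playing the role of the generating-function ``$\exp$''. The conceptual input is that every (possibly disconnected) forested graph decomposes uniquely as a disjoint union of connected forested graphs; thus the graphs summed in $\ee^\pm_{t,n}$ via \eqref{eq:equiv_disc} are precisely finite multisets $\bigsqcup_C C^{\sqcup m_C}$ of the connected forested graphs summed in $e_{\SG_n}(\F^\pm_{g,n})$ in Theorem~\ref{thm:equiv}. Correspondingly, the symmetry group of such a multiset factors as a product of wreath products $\prod_C \bigl(\UAut(C)\wr\SG_{m_C}\bigr)$. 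Since $\mathbf e^\pm$ and $\mathbf E^\pm$ are weighted groupoid cardinalities of the connected and of the full groupoid of forested graphs, I expect the claimed identity to fall out once the weight $\xi^\pm(G,\Phi,\alpha)\,p^{\pi_G(\alpha)}(\pm\hbar)^{t}$ is shown to be compatible with this monoidal structure.

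First I would match the $k$-th summand $\tfrac1k\mathbf e^\pm(\hbar^k,\bb p_{[k]})$ with the automorphisms of a disconnected graph that cyclically permute $k$ isomorphic copies of a single connected component $C$ of rank $g$ with forest $\Psi$. Expanding the sum over $\alpha\in\UAut(\bigsqcup^k C)$ by the cycle type of the induced permutation of the copies, I would isolate the contribution of one $k$-cycle together with its \emph{monodromy} $\beta\in\UAut(C)$. For such an $\alpha$, the induced leg permutation $\pi_G(\alpha)$ turns each $\ell$-cycle of $\pi_C(\beta)$ into a $k\ell$-cycle, giving $p^{\pi_G(\alpha)}=p^{\pi_C(\beta)}\big|_{p_i\to p_{ik}}$; likewise the negative Euler characteristic is multiplied by $k$, which produces $\hbar\to\hbar^k$, while the cyclic rotation of the copies supplies the factor $\tfrac1k$. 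These are exactly the ingredients of the plethystic substitution, so the standard exponential formula for weighted species assembles all component types and cycle types into the asserted $\exp$.

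The step I expect to be the main obstacle is the \emph{sign bookkeeping}: verifying that $\xi^\pm$ together with the $(\pm)$-grading is multiplicative through the wreath product. The key computation is that, for a $k$-cycle of copies of $C$ with monodromy $\beta$, a cyclic shift of $k$ equal-sized blocks is a product of that many $k$-cycles, whence $\sign(\alpha_\Phi)=(-1)^{|\Psi|(k-1)}\sign(\beta_\Psi)$, $\det(\alpha_{H_0})=(-1)^{k-1}$, and $\det(\alpha_{H_1})=(-1)^{h_1(C)(k-1)}\det(\beta_{H_1})$. I would then check that these signs, combined with the sign the forest edges contribute to the Euler characteristic and with the discrepancy between $(\pm\hbar)^{t}$ and the substituted $(\pm\hbar)^{g-1}$, collapse to exactly the sign carried by the corresponding term of the plethystic exponential. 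This is where the $(+)$ and $(-)$ cases genuinely differ: in the $(+)$ case only the forest-edge signs participate, whereas in the $(-)$ case the determinants $\det(\alpha_{H_0})$ and $\det(\alpha_{H_1})$ must be made to cancel against the parity of $g$ hidden in $(-\hbar)^{t}$. I would organise this as a single lemma computing the total exponent of $-1$ modulo $2$ and showing it is forced to be independent of $k$; once it is established, the exponential formula applies verbatim and Proposition~\ref{prop:exp} follows.
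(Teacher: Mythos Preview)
Your approach is correct and matches the paper's own argument, which simply cites the P\'olya-style exponential formula from \cite[Proposition~3.2 and Theorem~5.1]{BV2}: each summand $\mathbf e^\pm(\hbar^k,\bb p_{[k]})/k$ is interpreted exactly as you describe, counting $k$-tuples of isomorphic connected forested graphs together with an automorphism that cyclically permutes them, and the sign discrepancy between the $(+)$ and $(-)$ cases is absorbed into the $(\pm\hbar)$ convention. Your outlined sign computation for the $k$-cycle with monodromy~$\beta$ (the formulas for $\sign(\alpha_\Phi)$, $\det(\alpha_{H_0})$, $\det(\alpha_{H_1})$) is precisely the content that the paper defers to \cite{BV2}, so you are supplying the details the paper omits rather than taking a different route.
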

\begin{proof} 
The combinatorial argument for \cite[Proposition~3.2]{BV2} applies to
translate between the sum over \emph{connected} forested graphs in Theorem~\ref{thm:equiv}
to a sum over disconnected forested graphs in eq.~\eqref{eq:equiv_disc}.
The strategy goes back to P\'olya \cite{polya1937} (see also \cite[Chapter~4.3]{BLL}). 
Briefly, each summand 
${\mathbf e^\pm(\hbar^k, \bb p_{[k]})/k}$ in
the exponent of the stated formula 
counts pairs consisting of a $k$-tuple of mutually isomorphic forested graphs
and an automorphism that cyclically permutes the different graphs.
Such automorphisms give rise to different sign factors depending on the orientation of the graphs. Accounting for these signs gives the minus signs in front of $\hbar$ in the $(-)$-orientation case (see~\cite[Theorem~5.1]{BV2}). 
\end{proof}
We can solve the equation in the statement above for the generating functions $\mathbf e^\pm(\hbar, \bb p)$ 
and therefore obtain the value of $e_{\SG_n}(\F^\pm_{g,n})$ if we know sufficiently many coefficients of 
$ \mathbf E^\pm(\hbar, \bb p) $.
The required inverse transformation of power series is the plethystic logarithm:
\begin{corollary}
\label{cor:log}
\begin{align*} \mathbf e^\pm(\hbar, \bb p) = \sum_{k \geq 1} \frac{\mu(k)}{k} \log \mathbf E^\pm(\hbar^k, \bb p_{[k]}), \end{align*}
where $\mu(k)$ is the number-theoretic M\"obius function.
\end{corollary}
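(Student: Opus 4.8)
The plan is to invert the plethystic exponential from Proposition~\ref{prop:exp} by a standard Möbius-inversion argument in the ring $\widehat\Lambda((\hbar))$. First I would record the relation established in Proposition~\ref{prop:exp}, namely $\mathbf E^\pm = \exp\big(\sum_{k\geq 1} \tfrac{1}{k}\, \mathbf e^\pm(\hbar^k,\bb p_{[k]})\big)$, and take the formal logarithm of both sides to obtain
\begin{align*}
\log \mathbf E^\pm(\hbar,\bb p) = \sum_{k \geq 1} \frac{1}{k}\, \mathbf e^\pm(\hbar^k, \bb p_{[k]}).
\end{align*}
The key observation is that the operation $f(\hbar,\bb p) \mapsto f(\hbar^k, \bb p_{[k]})$, which substitutes $\hbar \to \hbar^k$ and $p_i \to p_{ik}$, is a ring endomorphism of $\widehat\Lambda((\hbar))$; call it $T_k$. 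These operators satisfy the semigroup law $T_j \circ T_k = T_{jk}$, because substituting $\hbar\to\hbar^k$ then $\hbar\to\hbar^j$ gives $\hbar\to\hbar^{jk}$, and likewise $p_i \to p_{ik} \to p_{ikj}$. This multiplicativity is exactly what makes Möbius inversion applicable.

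Writing $L = \log \mathbf E^\pm$ and noting that $T_k$ commutes with $\log$ on power series with constant term $1$, the displayed identity reads $L = \sum_{k\geq 1} \tfrac1k\, T_k\, \mathbf e^\pm$. I would then apply $\tfrac{\mu(m)}{m} T_m$ to both sides and sum over $m \geq 1$:
\begin{align*}
\sum_{m \geq 1} \frac{\mu(m)}{m}\, T_m L = \sum_{m \geq 1} \sum_{k \geq 1} \frac{\mu(m)}{mk}\, T_{mk}\, \mathbf e^\pm.
\end{align*}
Collecting terms by $N = mk$ on the right and using $T_{mk} = T_N$, the coefficient of $\tfrac1N T_N\,\mathbf e^\pm$ becomes $\sum_{m \mid N} \mu(m) = [N=1]$, by the defining property of the Möbius function. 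Hence the right-hand side collapses to $\mathbf e^\pm(\hbar,\bb p)$, yielding
\begin{align*}
\mathbf e^\pm(\hbar,\bb p) = \sum_{m \geq 1} \frac{\mu(m)}{m}\, T_m L = \sum_{m \geq 1} \frac{\mu(m)}{m} \log \mathbf E^\pm(\hbar^m, \bb p_{[m]}),
\end{align*}
which is the claimed formula after renaming $m$ to $k$.

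The only point requiring care — the main obstacle, though a mild one — is the justification that all the formal manipulations converge in $\widehat\Lambda((\hbar))$ and that the double sum may be reindexed. Because $\mathbf e^\pm$ has lowest $\hbar$-degree $-1$ (from the $g-1$ exponent, with $g \geq 0$) and $T_k$ multiplies $\hbar$-degrees by $k$, for each fixed power of $\hbar$ only finitely many $k$ contribute, so both sums are $\hbar$-adically finite in each degree and the interchange of summation is legitimate; the symmetric-function coefficients live in the inverse limit $\widehat\Lambda$ without further subtlety. I would state this finiteness explicitly to close the argument.
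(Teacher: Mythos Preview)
Your argument is correct and is essentially the paper's own: take the logarithm of the plethystic exponential in Proposition~\ref{prop:exp} and apply standard M\"obius inversion via $\sum_{d\mid N}\mu(d)=[N{=}1]$, exactly as the paper does (in slightly terser sequence notation $f_n,g_n$ rather than your operator notation $T_k$). One small refinement to your convergence check: it is not true that only finitely many $k$ contribute to each fixed power of $\hbar$, since the $g=1$ terms sit in $\hbar$-degree~$0$ for every $k$; convergence in $\widehat\Lambda((\hbar))$ nonetheless holds because for each fixed bidegree in $\hbar$ and in the symmetric-function variables only finitely many $(k,g,n)$ contribute.
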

\begin{proof}
Let $g_1,g_2,\ldots \in \widehat \Lambda((\hbar))$ such that  
$f_n = \sum_{k\geq 1} {g_{nk}}/{(nk)}$ converges uniformly in $n$.
It follows from the definition of the M\"obius function,
$\sum_{d|n} \mu(d) = 0$ for all $n \geq 2$ and $\mu(1)=1$,
that 
$g_1 = \sum_{k \geq 1} {\mu(k)} f_{k}/k$.
With 
$ g_n = \mathbf e^\pm(\hbar^n, \bb p_{[n]}) $
and $f_n = \log \mathbf E^\pm(\hbar^n, \bb p_{[n]})$
, the statement now follows form Proposition~\ref{prop:exp}.
\end{proof}

\subsection{Generating functions}
\label{sec:genfun}
In this section we will give the desired generating function for the 
polynomials $\ee_{t,n}^\pm$. Together with the discussion in the last section,
this generating function will give us an effective formula for the 
$\SG_n$-equivariant Euler characteristic $e_{\SG_n}(\F^\pm_{g,n})$.

The generating function is closely related to the one given in \cite[Theorem~3.12]{BV2} and we refer to the argument given there.
As in \cite{BV2}, we define the following power series in $\bb q = q_1,q_2,\ldots$
 \begin{align} \label{eq:V} {\mathbf V}(\bb q)&=q_1+\frac{q_1^2}{2}-\frac{q_2}{2}-(1+q_1)\sum_{k \geq 1} \frac{\mu(k)}{k} \log(1+q_k). \end{align}
See \cite{BV2} for the first coefficients of $\mathbf V(\bb q)$.
Moreover, we define two power series $\mathbf F^+$ and $\mathbf F^-$ 
both in two infinite sets of variables
$\bb q = q_1,q_2, \ldots$,
 $\bb p = p_1,p_2,\ldots$ and a single variable $u$, 
\begin{align} \label{eq:F} {\mathbf F^\pm}(u,\bb q,\bb p) =\exp \left(\sum_{k \geq 1} (\pm1)^{k+1} u^{-2k} \frac{{\mathbf V}( (u \cdot \bb q)_{[k]}) + u^k q_k p_k}{k}\right), \end{align}
where ${\mathbf V}((u \cdot \bb q)_{[k]})$ means that we replace each variable $q_i$ in ${\mathbf V}(\bb q)$ with $u^{ki} q_{ki}$.

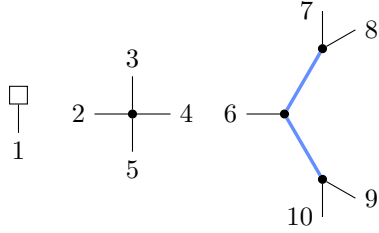
\begin{figure}
    \centering
    \begin{tikzpicture} \foreach \angle in {60,300} { \draw (\angle:1) node[vertex] (v\angle) {}; } \draw[col2, line width=1.3pt] (v) -- (v60); \draw[col2, line width=1.3pt] (v) -- (v300); \draw (v60) -- ++(90:.5) node[left] {$7$}; \draw (v60) -- ++(30:.5) node[right] {$8$}; \draw (v300) -- ++(330:.5) node[right] {$9$}; \draw (v300) -- ++(270:.5) node[left] {$10$}; \node[vertex] at (0,0) (v) {}; \draw (v) -- ++(-.5,0) node[left] {$6$}; \node[vertex] at (-2,0) (w) {}; \draw (w) -- ++(-.5,0) node[left] {$2$}; \draw (w) -- ++(0,.5) node[above] {$3$}; \draw (w) -- ++(.5,0) node[right] {$4$}; \draw (w) -- ++(0,-.5) node[below] {$5$}; \node[draw,rectangle] at (-3.5,.25) (u) {}; \draw (u) -- ++(0,-.5) node[below] {$1$}; \end{tikzpicture}
    \caption{Extended forest with one special component}
    \label{fig:extforest}
\end{figure}

Recall that a forest is an acyclic graph. As graphs, also forests are required to be admissible that means they have no vertices of degree $0$ and $2$.
Again, the univalent vertices are interpreted as the legs of the forest.
An \emph{extended} forest is a forests that is additionally allowed to have \emph{special} univalent vertices that are always connected by a 1-cell to a leg.
In contrast to graphs, we do not allow extended forests to have components that consist of 
two adjacent legs or two adjacent special vertices.
See also \cite[Section~3.7]{BV2} for a discussion of extended forests.
The legs of a forest are labeled by integers $\{1,\ldots,s\}$. The special vertices remain unlabeled.
Figure~\ref{fig:extforest} depicts an extended forest with three components; one of which is special. The special vertex is marked as a box. Legs are again drawn as labeled half-edges.

We write $k(\Phi)$, $s(\Phi)$ and $n(\Phi)$ for the 
total number of connected components, the number of legs and the number of special vertices of an extended forest $\Phi$.
An automorphism $\gamma \in \UAut(\Phi)$ that is allowed to permute the leg-labels gives rise to a permutation of the edges of $\Phi$.
We write $e_\gamma(\Phi)$ for the numbers of orbits of this permutation.
An automorphism $\gamma \in \UAut(\Phi)$ also gives rise to a permutation $\pi_G(\gamma) \in \SG_{s(\Phi)}$ of the legs, a permutation $\pi_G^\star(\gamma)\in \SG_{n(\Phi)}$ of the special vertices and a permutation $\gamma_{H_0(\Phi)}\in \SG_{h_0(\Phi)}$ of the connected components of $\Phi$.

\begin{proposition}
\label{prop:genFpm}
The generating functions ${\mathbf F^\pm}$ count signed pairs of an extended forest $\Phi$ and an automorphism $\gamma$ of $\Phi$. 
Explicitly, 
\begin{align*} {\mathbf F^+}(u,\bb q,\bb p) &= \sum_{(\Phi,\gamma)} \phantom{ \sign(\gamma_{H_0(\Phi)}) } (-1)^{e_\gamma(\Phi)} u^{s(\Phi)-2k(\Phi)} p^{\pi_G^\star(\gamma)} \frac{q^{\pi_G(\gamma)} }{s(\Phi)!} ,\\
{\mathbf F^-}(u,\bb q,\bb p) &= \sum_{(\Phi,\gamma)} \sign(\gamma_{H_0(\Phi)}) (-1)^{e_\gamma(\Phi)} u^{s(\Phi)-2k(\Phi)} p^{\pi_G^\star(\gamma)} \frac{q^{\pi_G(\gamma)} }{s(\Phi)!} , \end{align*}
where we sum over all pairs of an extended forest $\Phi$ and 
all $\gamma \in \UAut(\Phi)$.
\end{proposition}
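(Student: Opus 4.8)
The plan is to interpret $\mathbf F^\pm$ as the generating function for a weighted count of pairs $(\Phi,\gamma)$ and to verify that the claimed formulas reproduce exactly the combinatorial weights dictated by the definition of $\ee^\pm_{t,n}$ in eq.~\eqref{eq:equiv_disc}, specialized to the acyclic (forest) setting. Concretely, $\mathbf V(\bb q)$ is already known from \cite[Theorem~3.12]{BV2} to be the generating function counting a single connected forest component weighted by $(-1)^{e_\gamma}$ over automorphisms $\gamma$, with $q_i$ tracking the leg-orbit structure via $q^{\pi_G(\gamma)}$. The first step is therefore to recall this interpretation of $\mathbf V$ and to isolate precisely what the extra term $u^k q_k p_k$ inside the exponent of eq.~\eqref{eq:F} contributes: it should account exactly for the special components, namely the forests consisting of a single leg joined by a $1$-cell to a special vertex, which carry one $p$-variable (for the special vertex) and one $q$-variable (for the leg).

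The main step is then a plethystic/exponential-formula argument. Writing $\mathbf F^\pm = \exp(\sum_{k\ge 1}(\pm 1)^{k+1}u^{-2k}A_k/k)$ with $A_k$ the bracketed single-component contribution evaluated at the substituted variables, I would invoke the standard species-theoretic principle (P\'olya/\cite{polya1937}, \cite[Chapter~4.3]{BLL}) already used to prove Proposition~\ref{prop:exp}: passing to the exponential of a single-component generating function produces the generating function for disjoint unions, and the inner sum over $k$ with the $\mu$-free cyclic weighting $A_k/k$ correctly bookkeeps automorphisms $\gamma$ that cyclically permute isomorphic components. Here I must track three things through the exponential simultaneously: the edge-orbit sign $(-1)^{e_\gamma(\Phi)}$, the exponents of $u$ recording $s(\Phi)-2k(\Phi)$, and the three induced permutations $\pi_G(\gamma)$, $\pi_G^\star(\gamma)$, $\gamma_{H_0(\Phi)}$ recorded by $q^{\pi_G(\gamma)}$, $p^{\pi^\star_G(\gamma)}$, and in the $(-)$-case the factor $\sign(\gamma_{H_0(\Phi)})$. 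The key bookkeeping identities are that each component carries $u^{-2}$ per $k(\Phi)$ (the $u^{-2k}$ factor) together with $u^{s}$ from its legs, and that the substitution $(u\cdot\bb q)_{[k]}$ together with $u^kq_kp_k$ distributes these $u$-powers and leg-variables correctly across a $k$-fold cyclic block.

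The sign discrepancy between the two cases is where the argument is most delicate, and I expect it to be the main obstacle. In the $(+)$-case there is no homology and no $H_0$-sign, so the weight is purely $(-1)^{e_\gamma(\Phi)}$; the prefactor $(\pm1)^{k+1}$ reduces to $1$ for all $k$. In the $(-)$-case, a $k$-cycle of components contributes $\sign(\gamma_{H_0(\Phi)})=(-1)^{k-1}$, which is exactly the $(-1)^{k+1}$ appearing in eq.~\eqref{eq:F}; I would verify that this cyclic sign, multiplied out over the cycle structure of $\gamma$ on components, reassembles into the global factor $\sign(\gamma_{H_0(\Phi)})$ claimed in the statement. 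This is the same sign accounting referenced in the proof of Proposition~\ref{prop:exp} and in \cite[Theorem~5.1]{BV2}, and the cleanest route is to reduce it to that computation rather than to redo it. Finally I would confirm the normalization $1/s(\Phi)!$ arises from the passage between labeled and automorphism-weighted counts exactly as the $1/|\UAut|$ weighting in eq.~\eqref{eq:equiv_disc} does, completing the identification of both series with the stated sums over $(\Phi,\gamma)$.
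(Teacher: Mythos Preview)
Your proposal is correct and follows essentially the same route as the paper: both arguments reduce the statement to the corresponding results in \cite{BV2} (there stated as Propositions~3.10 and~5.5) for ordinary forests, identify the extra summand $u^k q_k p_k$ in the exponent as the contribution of a $k$-cycle of special components, verify the $u$-power bookkeeping ($u^{-2}$ per component, $u$ per leg), and in the $(-)$-case derive the component sign $(-1)^{k+1}$ from the sign of a $k$-cycle on $H_0(\Phi)$ exactly as in \cite[Lemma~5.3/Theorem~5.1]{BV2}. The only cosmetic difference is that you spell out the plethystic/exponential-formula mechanism and the $1/s(\Phi)!$ normalization more explicitly than the paper does.
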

\begin{proof}
This statement is a slight generalization of \cite[Proposition~3.10]{BV2} and \cite[Proposition~5.5]{BV2}. 
Here, we also allow forests to have special vertices and modify the generating function accordingly.
The term 
$(\pm1)^{k+1} u^{-k} q_k p_k/k$ 
in eq.~\eqref{eq:F} accounts for these special components.
Explicitly, it stands for $k$ special components that are cyclically permuted by the overall automorphism that is acting on the forested graph
(see, \cite[Lemma~3.3]{BV2} or \cite{BLL}). 
Each component contributes two negative powers of $u$, because it adds one component, and one positive power of $u$ as 
it adds one leg. So, we mark a cycle of $k$ special components with $u^{-k}$.
The different signs for the odd case are a consequence of \cite[Lemma~5.3]{BV2} and the fact that each special vertex counts as a new connected component of $\Phi$.
\end{proof}

We will use the coefficient extraction operator notation. That means, we denote the coefficient of a power series $f(\bb q)$ in front of $q^\lambda$ as $[q^\lambda] f(\bb q)$.
\begin{corollary}
\label{cor:coeffs}
The coefficient
$[u^{2t} q^\mu p^\lambda] \mathbf F^\pm(u,\bb q, \bb p)$ vanishes if 
$|\mu| = \sum_i \mu_i > 6t + 4 |\lambda|$.
\end{corollary}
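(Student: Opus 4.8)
The plan is to read the coefficient directly off the enumerative description of $\mathbf F^\pm$ in Proposition~\ref{prop:genFpm} and to convert the hypothesis $|\mu| > 6t + 4|\lambda|$ into a structural inequality on extended forests that can never hold. First I would record the three constraints that a contributing pair $(\Phi,\gamma)$ must satisfy for the monomial $u^{2t} q^\mu p^\lambda$. The power of $u$ forces $s(\Phi) - 2k(\Phi) = 2t$. Since the cycle type of $\pi_G(\gamma)\in \SG_{s(\Phi)}$ must equal $\mu$, and the cycle type of a permutation in $\SG_{s(\Phi)}$ is a partition of $s(\Phi)$, we get $s(\Phi) = |\mu|$; likewise $n(\Phi) = |\lambda|$ from $p^{\pi_G^\star(\gamma)}$. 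Eliminating $t$ through $2k(\Phi) = s(\Phi) - 2t$, a one-line computation shows that $|\mu| \le 6t + 4|\lambda|$ is \emph{equivalent} to
\[
3\,k(\Phi) \;\le\; s(\Phi) + 2\,n(\Phi).
\]

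So it suffices to prove this inequality for every extended forest $\Phi$. As $k$, $s$ and $n$ are additive over connected components, I would argue component by component and show that every connected component $C$ satisfies $s(C) + 2\,n(C) \ge 3$; summing over the $k(\Phi)$ components then yields the claim. There are two cases. If $C$ has no special vertex, it is an ordinary admissible tree: writing $I$ for its internal (non-leg) vertices, each of degree $\ge 3$ by admissibility, and $s$ for its legs, the handshake identity $2(s + I - 1) = \sum_v \deg v \ge s + 3I$ gives $s \ge I + 2$; a component with $I = 0$ would be a single edge joining two legs, which is forbidden, so $I \ge 1$ and hence $s \ge 3$. If $C$ contains a special vertex, then by the definition of extended forests (and as reflected by the term $u^k q_k p_k$ in eq.~\eqref{eq:F}, which marks special components carrying exactly one leg and one special vertex) $C$ is a single edge joining one leg to one special vertex, so $s(C) = n(C) = 1$ and $s(C) + 2\,n(C) = 3$. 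In both cases $s(C) + 2\,n(C) \ge 3$, as required.

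Finally I would assemble the conclusion: if $|\mu| > 6t + 4|\lambda|$, then by the equivalence above any contributing pair $(\Phi,\gamma)$ would have to violate $3\,k(\Phi) \le s(\Phi) + 2\,n(\Phi)$, which is impossible; hence the signed sum of Proposition~\ref{prop:genFpm} is empty and the coefficient vanishes. I expect the main obstacle to lie in the per-component estimate, and specifically in correctly pinning down which components may carry special vertices and in making sure the degenerate single-edge components are excluded by the prohibitions built into the definition of extended forests; the bookkeeping that turns the degree conditions into the constraints $s(\Phi) = |\mu|$, $n(\Phi) = |\lambda|$ and $s(\Phi) - 2k(\Phi) = 2t$ is routine by comparison.
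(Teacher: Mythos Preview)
Your argument is correct. The extraction of the constraints $s(\Phi)=|\mu|$, $n(\Phi)=|\lambda|$, $s(\Phi)-2k(\Phi)=2t$ from Proposition~\ref{prop:genFpm} is sound, the algebraic rewriting of $|\mu|\le 6t+4|\lambda|$ as $3k(\Phi)\le s(\Phi)+2n(\Phi)$ is right, and the per-component verification goes through: special components are necessarily a single $1$-cell joining one leg to one special vertex (since special vertices are univalent and by definition attached to a leg), while non-special admissible tree components have at least one internal vertex and hence at least three legs by the handshake count, the two-leg component being explicitly excluded.

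The paper's proof reaches the same inequality by a different, more constructive route: it identifies the extremal objects (``maximal $(r,n)$-forests'' consisting of $r$ disjoint tripods and $n$ special components), observes that every extended forest with $n$ special vertices is obtained from such a maximal forest by gluing pairs of legs and contracting edges, and notes that these operations keep $s(\Phi)-2k(\Phi)$ fixed while only decreasing $s(\Phi)$. This gives the bound by reading off the parameters of the maximal configuration. Your handshake-per-component argument is more direct and avoids introducing the auxiliary gluing/contracting operations; the paper's version, on the other hand, makes the equality case transparent and connects naturally to the way the generating function is later used. Both are short and elementary.
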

\begin{proof}
A \emph{maximal} $(r,n)$-forest is an extended forest that consists of $r$ copies of a degree $3$ vertex with three legs, and $n$ special components 
of a special vertex and one leg.
All extended forests with $n$ special components can be obtained by first starting with a maximal $(r,n)$-forest, subsequently 
creating new edges by gluing together pairs of legs of different components and finally by contracting edges.
The difference $s(\Phi)-2k(\Phi)$ is left invariant by these gluing and contracting operations, but the number of legs $s(\Phi)$ decreases 
each time we glue together a pair of legs.
It follows that maximal $(r,n)$-forests have the maximal number of legs for fixed $n$.
Such a forest contributes a power $u^{3r-2r+n-2n} q^\mu p^\lambda= u^{r-n} q^\mu p^\lambda$ to the generating function,
for some partitions $\mu,\lambda$ with $|\mu| = 3r+n$ and $|\lambda| = n$.
Fixing $2t = r-n$ gives $|\mu| = 6t + 4n$. 

Alternatively, the statement can also be verified by expanding $\mathbf V$ and $\mathbf F^\pm$ from eqs.~\eqref{eq:V}--\eqref{eq:F}.
\end{proof}

We define two sets of numbers, $\eta_\lambda^+$ and $\eta^-_\lambda$, that are indexed by an integer partition, $\lambda \parts s$. These numbers combine the definitions in Corollaries 3.5 and 5.8 of \cite{BV2}, where $\eta^+_\lambda$ is denoted as $\eta_\lambda$ and $\eta^-_\lambda$ as $\eta^{\mathrm{odd}}_\lambda$.
The discussion around these corollaries also includes a detailed combinatorial interpretation of these numbers: they count (signed) fixed-point free involutions that commute with a given permutation of cycle type $\lambda$.
An alternative notation for an integer partitions is 
$\lambda = [1^{m_1} 2^{m_2} \cdots ]$, where $m_k$ denotes 
the number of parts of size $k$ in $\lambda$.
Let
$$
 \eta^\pm_{\lambda}  = \prod_{k = 1}^s \eta^\pm_{k, m_k}, \text{
 where }
\eta_{k,\ell}^\pm=
\begin{cases}
0 & \text{if $k$ and $\ell$ are odd}\\
k^{\ell/2} (\ell -1)!!  & \text{if $k$ is odd and $\ell$ is even}\\
\sum_{r=0}^{\lfloor \ell/2 \rfloor} (\pm 1)^{\ell k/2+r} \binom{\ell}{2r} k^r (2r-1)!! & \text{else}
\end{cases}
$$

With this we get an effective expression for the polynomials $\ee_{t,n}^\pm$ from eq.~\eqref{eq:equiv_disc}:
\begin{theorem}
\label{thm:ee}
\begin{align*} \ee_{t,n}^\pm = \sum_\mu \eta^\pm_{\mu} \sum_{\lambda \parts n} p^\lambda [u^{2t} q^\mu p^\lambda] \mathbf F^\pm(u,\bb q, \bb p), \end{align*}
where we sum over all integer partitions $\mu$ and all partitions $\lambda$ of $n$.
The terms in the sum are only non-zero for a finite number of such partitions.
Moreover, all terms vanish if $t < - \frac23 n$.
\end{theorem}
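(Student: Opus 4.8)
The plan is to obtain the formula by viewing each (possibly disconnected) forested graph $(G,\Phi)$ counted in eq.~\eqref{eq:equiv_disc} as an extended forest equipped with the regluing data that rebuilds its non-forest edges, and then to import the signed enumeration of that data from \cite{BV2}. Cutting every non-forest edge of $G$ produces an extended forest $\Phi'$: its ordinary legs are the loose ends created by the cuts, while its special vertices are exactly the marked points (legs) of $G$, each still attached to its leg $1$-cell. The graph $(G,\Phi)$ is recovered from $\Phi'$ by a fixed-point free involution $\iota$ on the legs of $\Phi'$ that pairs the loose ends back into edges. Under this correspondence an automorphism $\alpha \in \UAut(G,\Phi)$ is the same datum as an automorphism $\gamma \in \UAut(\Phi')$ that commutes with $\iota$; the permutation $\pi_G(\alpha)$ of the marked points equals the special-vertex permutation $\pi_G^\star(\gamma)$, and the orientation sign $\xi^\pm(G,\Phi,\alpha)$ factors as the sign already recorded by $\mathbf F^\pm$ in Proposition~\ref{prop:genFpm} (the forest-edge-orbit sign $(-1)^{e_\gamma(\Phi')}$, together with $\sign(\gamma_{H_0})$ in the $(-)$-case) times the sign that $\iota$ contributes to the gluing.

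First I would rewrite eq.~\eqref{eq:equiv_disc} by summing over the pairs $(\Phi',\gamma)$ before summing over the fixed-point free involutions $\iota$. For a fixed $(\Phi',\gamma)$ the legs of $\Phi'$ carry the permutation $\pi_G(\gamma)$ of some cycle type $\mu$, and the signed sum over all fixed-point free involutions commuting with that permutation is, by the defining property of the numbers $\eta^\pm_\mu$ (combining \cite[Cor.~3.5]{BV2} and \cite[Cor.~5.8]{BV2}), precisely $\eta^\pm_\mu$. Applied to the generating function of Proposition~\ref{prop:genFpm}, this is exactly the operation that replaces each monomial $q^{\pi_G(\gamma)}$ of cycle type $\mu$ by the scalar $\eta^\pm_\mu$, which explains the factor $\eta^\pm_\mu$ and the extraction $[q^\mu]$ in the claimed formula. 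Because gluing one pair of loose ends into an edge lowers the Euler characteristic by one, a forest with $u$-exponent $s(\Phi')-2k(\Phi') = 2t$ yields precisely the graphs $G$ with $\rank H_0(G) - \rank H_1(G) = -t$, so the extraction $[u^{2t}]$ singles out the graphs summed in $\ee_{t,n}^\pm$; likewise $n$ is the number of special vertices, i.e.\ the total $p$-degree, which is why only $\lambda \parts n$ survive and are re-multiplied by $p^\lambda$. I expect the main obstacle to be the verification that the orientation signs split as asserted and that the normalizing factors $1/s(\Phi')!$ of $\mathbf F^\pm$ turn into the factors $1/|\UAut(G,\Phi)|$ of eq.~\eqref{eq:equiv_disc}; this is exactly the bookkeeping carried out in the proofs of \cite[Theorem~3.12]{BV2} and its odd counterpart, which I would adapt, inserting the special-vertex contributions that are already built into $\mathbf F^\pm$.

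The two remaining assertions then follow immediately from Corollary~\ref{cor:coeffs}. For fixed $t$ and $n$ only the finitely many partitions $\lambda \parts n$ contribute, and for each of them $[u^{2t} q^\mu p^\lambda]\mathbf F^\pm$ vanishes unless $|\mu| \leq 6t + 4|\lambda| = 6t + 4n$; this bounds $|\mu|$ and hence leaves only finitely many non-zero summands. For the vanishing, note that $|\mu| \geq 0$ for every partition, so if $t < -\tfrac23 n$ then $6t + 4n < 0 \leq |\mu|$ and every coefficient $[u^{2t} q^\mu p^\lambda]\mathbf F^\pm$ is zero by Corollary~\ref{cor:coeffs}; therefore $\ee_{t,n}^\pm = 0$.
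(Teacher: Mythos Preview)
Your proposal is correct and follows essentially the same route as the paper: both argue via the bijection between (possibly disconnected) forested graphs and pairs consisting of an extended forest and a fixed-point free involution on its legs, invoke Proposition~\ref{prop:genFpm} for the forest side and the $\eta^\pm_\mu$ counts from \cite{BV2} for the matchings, and defer the detailed sign and normalization bookkeeping to the proofs of \cite[Thm.~3.12/Prop.~3.10 and 5.5]{BV2}; the finiteness and vanishing claims are handled identically via Corollary~\ref{cor:coeffs}. One small point to tighten: to recover the extended forests of Proposition~\ref{prop:genFpm} (where each special vertex sits in its own two-vertex component), your cutting must sever not only the non-forest edges but also the leg $1$-cells of $G$ at their non-leg endpoint---otherwise the resulting object does not match the generating function $\mathbf F^\pm$.
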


\begin{proof}
By Corollary~\ref{cor:coeffs}, the sum over $\mu$ has finite support 
and all terms vanish if $t < - \frac23 n$.

Matching all legs of an extended forest in pairs gives a graph with a marked forest and special univalent vertices. 
We get this graph by gluing together the legs as described by a matching. 
Gluing together two legs creates a new 1-cell between the vertices that are adjacent to the legs and forgets about the legs and the 1-cells that are incident to them. 
The special vertices are subsequently promoted to (unlabeled)
legs of the resulting forested graph.  
All forested graphs with unlabeled legs can be obtained this way.

As an example consider the extended forest in Figure~\ref{fig:extforest}. 
We can match the legs in the pairs $(1,2),(3,7),(4,6),(5,10)$ and $(8,9)$.
Gluing together these legs and promoting the special vertex to a new leg recreates the forested graph in Figure~\ref{fig:fg312} without the leg-label.

In general, if the extended forest that we start with has $k$ connected components and $s$ legs, then 
the graph that we obtain after the gluing has Euler characteristic $k-s/2$.
By Proposition~\ref{prop:genFpm} and the definition of the polynomials $\ee_{t,n}$ in eq.~\eqref{eq:equiv_disc} we therefore extract the correct coefficient as all forests $\Phi$ with $s(\Phi) - 2k(\Phi) = 2t$ contribute to the coefficient of $u^{2t}$.
See the proof of \cite[Proposition~3.10]{BV2} and its odd version \cite[Proposition~5.5]{BV2}
for the discussions of automorphisms and sign factors which apply also in our generalized case.
\end{proof}

Together Corollary~\ref{cor:MGF}, Corollary~\ref{cor:log} and Theorem~\ref{thm:ee} give an effective algorithm for the computation of the polynomials $e_{\SG_n}(\MG_{g,n})$ and $e_{\SG_n}^\mathrm{odd}(\MG_{g,n})$. In summary:

\begin{theorem}
\label{thm:comp}
Fix $\chi >1$.
To compute $e(\F^\pm_{g,n})$ for all $2g-2+n > 0$ and $g+1+n \leq \chi$,
\begin{enumerate}
\item Compute the coefficients of $\mathbf{V}$ up to homogeneous order $6\chi$ in the $\bb q$-variables by expanding the power series defined in eq.~\eqref{eq:V}.
\item Compute the coefficients of $\mathbf{F}^\pm$ up to homogeneous order $\chi$ in $u^2$ and in the $\bb p$ variables by expanding the power series defined in eq.~\eqref{eq:F}.
\item Compute the polynomials $\ee_{t,n}^\pm$ for all pairs $(t,n)$ with $n \geq 0$, $t + n \leq \chi$ and $t \geq -\frac23 n$ using Theorem~\ref{thm:ee}.
\item Compute the polynomials $e_{\SG_n}(\F^\pm_{g,n})$ using the formula from Corollary~\ref{cor:log}.
\end{enumerate}
\end{theorem}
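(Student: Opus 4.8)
The plan is to treat this statement as an \emph{assembly} result: all of the mathematical substance is already contained in the preceding propositions, so the proof reduces to verifying that the four truncated computations chain together correctly and that the prescribed truncation orders are simultaneously \emph{sufficient} (no needed coefficient is discarded) and \emph{finite} (the algorithm terminates). I would organize the argument by tracing the dependency chain backwards from the target polynomials $e_{\SG_n}(\F^\pm_{g,n})$, which by Corollary~\ref{cor:MGF} determine all the Euler characteristics of $\MG_{g,n}$ that we are after.

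First I would recall from Proposition~\ref{prop:exp} that $e_{\SG_n}(\F^\pm_{g,n})$ is exactly the $\bb p$-degree-$n$ part of the coefficient of $(\pm\hbar)^{g-1}$ in $\mathbf e^\pm(\hbar,\bb p)$, and that Corollary~\ref{cor:log} expresses $\mathbf e^\pm = \sum_{k\ge 1}\tfrac{\mu(k)}{k}\log\mathbf E^\pm(\hbar^k,\bb p_{[k]})$. I would then argue that to recover $\mathbf e^\pm$ in the range $g+1+n\le\chi$ (so $(g-1)+n\le\chi-2$) it suffices to know the coefficients $\ee_{t,n'}^\pm$ of $\mathbf E^\pm=\sum_{t,n'}\ee_{t,n'}^\pm(\pm\hbar)^t$ with $t+n'\le\chi$. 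The key observation is that in the $k\ge 2$ summands the substitutions $\hbar\mapsto\hbar^k$ and $p_i\mapsto p_{ik}$ multiply both the $\hbar$-order and the $\bb p$-degree by $k$, so those terms only call for coefficients of \emph{strictly smaller} order; hence the $k=1$ summand $\log\mathbf E^\pm(\hbar,\bb p)$ is the binding constraint, and expanding its logarithm through $\hbar$-order $g-1$ and $\bb p$-degree $n$ involves only factors $\ee_{t,n'}^\pm$ with $t+n'\le(g-1)+n\le\chi$. This is precisely the bound recorded in step (3).

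Next I would substitute this into Theorem~\ref{thm:ee}, which writes $\ee_{t,n}^\pm=\sum_\mu \eta^\pm_\mu \sum_{\lambda\parts n} p^\lambda\,[u^{2t}q^\mu p^\lambda]\mathbf F^\pm$. Since the $\eta^\pm_\mu$ are given in closed form, the only series to be expanded is $\mathbf F^\pm$, and the coefficients needed have $u^2$-order $t\le\chi$, $\bb p$-degree $n\le\chi$, and—by Corollary~\ref{cor:coeffs}—vanish unless $|\mu|\le 6t+4n$. Using $t+n\le\chi$ I would bound $6t+4n=6(t+n)-2n\le 6\chi$, so only $\bb q$-weights up to $6\chi$ contribute; this reconciles the three truncation orders $6\chi$, $\chi$, $\chi$ in steps (1)–(2). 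Finally, since $\mathbf F^\pm$ is built from $\mathbf V$ through eq.~\eqref{eq:F} by the substitutions $q_i\mapsto u^{ki}q_{ki}$, which merely rescale the $\bb q$-weight by $k\ge 1$, knowing $\mathbf V$ through $\bb q$-weight $6\chi$ determines $\mathbf F^\pm$ through $\bb q$-weight $6\chi$, justifying step (1). Termination then follows because Corollary~\ref{cor:coeffs} makes the $\mu$-sum finitely supported, Theorem~\ref{thm:ee} forces $\ee_{t,n}^\pm=0$ for $t<-\tfrac23 n$ (bounding the $(t,n)$ needed in step (3)), and the finiteness noted after eq.~\eqref{eq:equiv_disc} guarantees each $\ee_{t,n}^\pm$ is a genuine polynomial.

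The hard part will be purely the degree bookkeeping of the second paragraph: carefully confirming that truncating $\mathbf E^\pm$ at $t+n\le\chi$ loses nothing when the plethystic logarithm of Corollary~\ref{cor:log} is applied, i.e.\ that the contributions of the $k\ge 2$ Möbius terms are automatically captured, and that the single inequality $6t+4n\le 6\chi$ is exactly what aligns the truncation of $\mathbf V$ in step (1) with the coefficients of $\mathbf F^\pm$ demanded by Theorem~\ref{thm:ee}. Everything else is a direct citation of the already-established results.
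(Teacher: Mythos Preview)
The paper gives no separate proof of this theorem; it is stated as an algorithmic summary whose correctness is meant to follow directly from Corollary~\ref{cor:MGF}, Corollary~\ref{cor:log}, Theorem~\ref{thm:ee}, and Corollary~\ref{cor:coeffs}. Your proposal is correct and traces exactly this dependency chain, additionally supplying the degree bookkeeping (notably that $t_i+n_i\ge 0$ for every factor in the plethystic logarithm so that the $k=1$ term governs the truncation, and that $6t+4n\le 6(t+n)\le 6\chi$ aligns step~(1) with Theorem~\ref{thm:ee}) which the paper leaves implicit.
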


The formulas in Corollary~\ref{cor:MGF} and Proposition~\ref{prop:MGF-special} can be used to translate the result into the numbers $e(\MG_{g,n}),e^\mathrm{odd}(\MG_{g,n}),e(\MG_{g,n}^{\SG_n})$ and $e^\mathrm{odd}(\MG_{g,n}^{\SG_n})$.
To compute the integers $\sum_k (-1)^k c^k_{g,\lambda}$ that are the alternating sums over the multiplicities of the irreducible representations in the respective cohomology of $\MG_{g,n}$, we have to write the polynomials $e_{\SG_n}(\MG_{g,n})$ and $e_{\SG_n}^\mathrm{odd}(\MG_{g,n})$ in terms of Schur polynomials as in eq.~\eqref{eq:Sndef}. We can do so using the Murnaghan-Nakayama rule.

\subsection{Implementation of Theorem~\ref{thm:comp} in \texorpdfstring{\texttt{FORM}}{FORM}}
\label{sec:form}

The most demanding computational step in Theorem~\ref{thm:comp} is the expansion of the power series $\mathbf F^\pm$ as defined in eq.~\eqref{eq:F} in the formal variables $u,\bb p$ and $\bb q$. Conventional computer algebra struggles with such expansions; usually only a small number of terms are accessible. To be able to apply Theorem~\ref{thm:comp} at moderately large values of $\chi$, we use the \texttt{FORM} programming language. \texttt{FORM} is designed to deal with large analytic expressions that come up in high-energy physics.

A \texttt{FORM} program that implements Theorem~\ref{thm:comp} is included in the ancillary files to this article in the file \texttt{eMGgn.frm}. It can be run with the command \texttt{form eMGgn.frm} after downloading and installing \texttt{FORM} from \url{https://github.com/vermaseren/form.git}. The syntax and details of the code are described in a \texttt{FORM} tutorial \cite{FORMcourse}. We used  \texttt{FORM} version \emph{5 beta} for our computations.

The output of the program is a power series in which each coefficient is a symmetric function that describes the respective $\SG_n$-equivariant Euler characteristic. These coefficients are given in the power sum basis of the ring of symmetric functions.
To translate the output into Schur symmetric function via the Murnaghan-Nakayama rule, we used  \texttt{Sage}~\cite{sagemath}. 

\begin{remark}
By employing the \emph{Feynman transform} introduced by Getzler and Kapranov \cite{getzler1998modular}, modified versions of this program can effectively compute the Euler characteristics of \emph{modular operads}. Furthermore, by combining Joyal's theory of species (see, e.g., \cite{BLL}) with findings from the first author's thesis \cite{Borinsky:2018mdl}, the program can be adapted to count various combinatorial objects, such the number of isomorphism classes of admissible graphs of rank $n$ \cite{OEIS}.
\end{remark}

\subsection{Large \texorpdfstring{$g$}{g} asymptotics of the Euler characteristics of \texorpdfstring{$\MG_{g,n}$}{MGgn}}
\label{sec:asy}
The \emph{virtual} or \emph{rational} Euler characteristic $\chi(G)$ is an 
invariant of a group $G$ that is often better behaved than the usual Euler characteristic. %

Recall that the notation `$f(g) \sim h(g)$ for large $g$' means that $\lim_{g\rightarrow \infty} f(g)/h(g) = 1$.
By the short exact sequence 
$1 \rightarrow F_g^n \rightarrow \Gamma_{g,n} \rightarrow \Out(F_{g}) \rightarrow 1$ \cite{conant2016assembling}, we have $\chi(\Gamma_{g,n}) = (1-g)^n \chi(\Out(F_{g}))$.
The numbers $\chi(\Out(F_{g}))$ can be computed using \cite[Proposition 8.5]{BV} and 
the asymptotic growth rate of $\chi(\Out(F_{g}))$ is known explicitly by \cite[Theorem~A]{BV}. Using this together with the formula for $\chi(\Gamma_{g,n})$ and Stirling's approximation, we find that for fixed $n \geq 0$,
\begin{align*} \chi(\Gamma_{g,n}) \sim (-1)^{n+1} g^{n} \left( \frac{g}{e}\right)^g / (g\log g)^2 \text{ for large } g.  \end{align*}

In \cite{BV2}, it was proven that 
$e(\MG_{g,0}) \sim e^{-\frac14} \chi(\Out(F_{g}))$ 
and 
$e^\mathrm{odd}(\MG_{g,0}) \sim e^{\frac14} \chi(\Out(F_{g}))$ for large $g$.
It would be interesting to make similar statements about $\MG_{g,n}$ for $n \geq 1$.
Using our data on the Euler characteristics of $\MG_{g,n}$, we empirically verified the following conjecture which generalizes the known asymptotic behaviour of $\MG_{g,0}$ to $\MG_{g,n}$ for all $n \geq 0$.
\begin{conjecture} 
\label{conj:asymp}
For fixed $n \geq 0$, we have for large $g$
\begin{align*} e(\MG_{g,n}) &\sim e^{-\frac14} \chi(\Gamma_{g,n}) & e^\mathrm{odd}(\MG_{g,n}) &\sim e^{\frac14} \chi(\Gamma_{g,n}) \\
e(\MG_{g,n}^{\SG_n}) &\sim e^{-\frac14} \chi(\Gamma_{g,n})/n! & e^\mathrm{odd}(\MG_{g,n}^{\SG_n}) &\sim e^{\frac14} \chi(\Gamma_{g,n})/n! \end{align*}
\end{conjecture}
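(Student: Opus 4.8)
The plan is to reduce the four asymptotic statements to a single clean claim about the ordinary Euler characteristic and then attack that claim by generating-function asymptotics. First I would dispose of the two $\SG_n$-invariant statements. By Proposition~\ref{prop:MGF-special}, $e(\MG_{g,n}^{\SG_n}) = e_{\SG_n}(\F^+_{g,n})|_{\bb p = \boldsymbol 1} = \frac{1}{n!}\sum_{\pi \in \SG_n} L^+(\pi)$, where $L^\pm(\pi)$ is the $\pi$-equivariant (Lefschetz) Euler characteristic, while $e(\MG_{g,n})/n! = \frac{1}{n!} L^+(\id)$ is exactly the $\pi = \id$ contribution to that average. Hence the claim $e(\MG_{g,n}^{\SG_n}) \sim e(\MG_{g,n})/n!$ is equivalent to $\sum_{\pi \neq \id} L^+(\pi) = o\big(L^+(\id)\big)$ as $g \to \infty$, i.e.\ the non-identity permutations are subleading because graphs admitting a nontrivial leg-permuting symmetry are asymptotically rare. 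In the language of Theorem~\ref{thm:comp} this says the identity-cycle-type monomial $p^{(1^n)}$ carries the leading asymptotics of the polynomial $e_{\SG_n}(\F^+_{g,n})$, and the identical argument in the $(-)$-case handles the odd invariant statement, so it suffices to treat $e(\MG_{g,n})$ and $e^{\mathrm{odd}}(\MG_{g,n})$.

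For these I would exploit the exact identity $\chi(\Gamma_{g,n}) = (1-g)^n\,\chi(\Out(F_g))$ recalled in \S\ref{sec:asy}. Combined with the known legless asymptotics $e(\MG_{g,0}) \sim e^{-1/4}\chi(\Out(F_g))$ and $e^{\mathrm{odd}}(\MG_{g,0}) \sim e^{1/4}\chi(\Out(F_g))$ of \cite{BV2}, the conjecture becomes \emph{equivalent} to the leg-recursion
\begin{align*} e(\MG_{g,n}) \sim (1-g)^n\, e(\MG_{g,0}) \quad\text{and}\quad e^{\mathrm{odd}}(\MG_{g,n}) \sim (1-g)^n\, e^{\mathrm{odd}}(\MG_{g,0}) \end{align*}
for fixed $n$ as $g \to \infty$. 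The advantage of this reformulation is that the sub-exponential correction $e^{\pm 1/4}$ is then inherited automatically from the $n=0$ case, so the entire content of the conjecture reduces to the single assertion that \emph{each added leg multiplies the leading asymptotics by the factor $(1-g)$}, exactly as it does for the virtual Euler characteristic. Heuristically this is because a dominant graph at large $g$ is enormous and almost rigid, so attaching one further leg amounts to selecting an attachment point among a number of cells growing linearly in $g$, with the orientation and automorphism bookkeeping contributing the remaining part of the factor.

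To make this rigorous I would rerun the singularity/saddle-point analysis of \cite{BV2} on the generating functions of Theorem~\ref{thm:comp}, now retaining the leg variables. Setting $\bb p = \bb 1$ in $\mathbf F^\pm$ and $\mathbf e^\pm$ isolates the legged but permutation-trivial contribution, and the task is to extract the coefficient of $(\pm \hbar)^{g-1}$ to leading order while tracking the explicit power of $n$ coming from the legs. Concretely, one must show that the leg-dependent factor in the generating function shifts only the polynomial prefactor of the factorially divergent asymptotics by $(1-g)^n\big(1+o(1)\big)$ and leaves both the exponential scale and the $e^{\pm 1/4}$ correction untouched. I expect \textbf{this uniform control of the leg contribution to be the main obstacle}: the $n=0$ estimate in \cite{BV2} is already a delicate analysis of a factorially divergent series, and one must prove that the bounded perturbation introduced by the $n$ legs factors off cleanly and uniformly in $g$ rather than interacting with the dominant saddle. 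Establishing that both the non-identity permutations (for the invariant statements) and the extra legs (for all statements) are genuinely subleading, with error terms controlled uniformly in $g$, is the crux on which the whole conjecture turns.
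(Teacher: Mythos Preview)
The statement you are attempting to prove is explicitly labeled a \emph{conjecture} in the paper and is not proven there; the paper only remarks that ``Proving this conjecture should be feasible by generalizing the analytic argument in \cite[Sec.~4]{BV2}'' and otherwise supports it with numerical data. Your proposal is essentially an elaboration of this same hint: you reduce to the legless case via the identity $\chi(\Gamma_{g,n}) = (1-g)^n\,\chi(\Out(F_g))$ and then propose to rerun the saddle-point/singularity analysis of \cite{BV2} with the leg variables retained. In that sense there is no discrepancy between your approach and the paper's---both point to the same method and both leave the statement open.

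Your strategic reductions are sound as reformulations. The observation that $e(\MG_{g,n}^{\SG_n})$ is the average of the Lefschetz numbers $L^\pm(\pi)$ over $\SG_n$ while $e(\MG_{g,n})/n!$ is the $\pi=\id$ term is correct and does reduce the invariant statements to the claim $\sum_{\pi\neq\id} L^\pm(\pi) = o(L^\pm(\id))$; likewise the rewriting of the legged conjecture as $e(\MG_{g,n}) \sim (1-g)^n\,e(\MG_{g,0})$ is a clean equivalent formulation. But these are reductions to further unproven analytic claims, not proofs. The two assertions you flag---that non-identity permutations are asymptotically negligible, and that the leg contribution factors off uniformly from the dominant factorial asymptotics without disturbing the $e^{\pm 1/4}$ correction---are precisely the content of the conjecture and would each require a controlled extension of the delicate estimates in \cite[Sec.~4]{BV2}. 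You identify this honestly as the main obstacle; the paper offers nothing further, so your proposal should be read as a plausible program aligned with the authors' own expectations rather than as a proof.
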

Proving this conjecture should be feasible by generalizing the analytic argument in \cite[Sec.~4]{BV2}.

\section{The large-$n$ \texorpdfstring{$\SG_n$}{Sn}-invariant cohomological stability of \texorpdfstring{$\MG_{g,n}$}{MGgn}}
\label{sec:stable}
Our data in the Tables~\ref{tab:eeven}--\ref{tab:eEodd} exhibit an obvious pattern:
For fixed $g$  the \emph{$\SG_n$-invariant} Euler characteristics $e(\MG_{g,n}^{\SG_n})$ and $e^\mathrm{odd}(\MG_{g,n}^{\SG_n})$ appear to be constant for all $n\geq g$. %
Unfortunately, this is not manifest from our formulas (i.e.~from Theorem~\ref{thm:comp}).
To explain this pattern, we will prove the stabilization of the associated cohomologies.
\begin{theorem}
\label{thm:stab}
Fix $\mathbb Q_\rho \in \{ \mathbb Q, \widetilde{ \mathbb Q} \}$.
The cohomology $H^\bullet(\MG_{g,n};\Q_\rho)^{\SG_n}$ stabilizes for $n\rightarrow \infty$:
If  $n \geq g \geq 2$, then there are  isomorphisms
$ H^k(\MG_{g,n};\Q_\rho)^{\SG_n}\rightarrow H^k(\MG_{g,n+1};\Q_\rho)^{\SG_n}$ for all $k$.
\end{theorem}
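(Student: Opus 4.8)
The plan is to realize the stabilization as an honest map of cohomologies induced by a geometric operation on graphs, and then prove it is an isomorphism on the $\SG_n$-invariant part by exhibiting an explicit inverse (or a spectral-sequence/chain-level argument) in the stable range. First I would set up the comparison map. There is a natural way to add a leg to a graph: given a graph $G$ with $n$ legs, one can attach an extra leg, and conversely forgetting a leg gives a map $\MG_{g,n+1}\to\MG_{g,n}$. The issue is that these maps are not $\SG_n$-equivariant in the right way and do not directly descend to legs $1,\ldots,n+1$ symmetrically. The key idea, which I expect is the one used, is that on the \emph{invariant} part the symmetrization kills the asymmetry: an $\SG_{n+1}$-invariant class, when restricted along an inclusion that forgets the $(n+1)$st leg, lands in the $\SG_n$-invariant part, and symmetrizing reconstructs it. So I would work with the forested graph complexes $\F_{g,n}^{\pm}$ of Theorem~\ref{thm:forested_graph_complex}, and express everything on the chain level, where the $\SG_n$-action is transparent via $\UAut(G,\Phi)$ and $\pi_G$.

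The central chain-level claim I would aim for is a dimension count: for $n\geq g\geq 2$, the $\SG_n$-coinvariants (equivalently invariants, over $\Q$) of $C_k(\F_{g,n}^{\pm})$ and $C_k(\F_{g,n+1}^{\pm})$ have matching dimensions, degree by degree, and compatibly with the boundary maps. The combinatorial heart is this: an $\SG_n$-invariant generator corresponds to a connected orientable forested graph with \emph{unlabeled} legs together with a choice of distributing the labels symmetrically. The phrase ``the range of stabilization does not depend on the cohomological degree'' suggests that the correct statement is purely about the underlying unlabeled graphs: once $n\geq g$, adding one more unlabeled leg (a univalent vertex attached by a 1-cell to an existing vertex, or a ``hair'') gives a bijection between the relevant isomorphism classes of $(\pm)$-orientable graphs contributing to the invariant part, because there are always enough vertices of valence $\geq 3$ to absorb an additional hair without changing the rank or the orientability class. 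I would make this bijection explicit and check it commutes with $\bdry$.

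The main obstacle, and where I would spend the most care, is the \emph{orientability} bookkeeping under adding a leg. Adding a hair can create or destroy automorphisms, and in the $(-)$-case it interacts with the $\det(\alpha_{H_1})$ and $\sign(\alpha_\Phi)$ factors defining $\xi^{\pm}$. I must verify that the added hair does not enlarge $\Aut(G,\Phi)$ in a way that flips an orientation sign, so that a $(\pm)$-orientable graph maps to a $(\pm)$-orientable graph and vice versa, and that the $\SG_n\to\SG_{n+1}$ label-stabilizer structure matches up. The condition $n\geq g$ should be exactly what guarantees that the hairs outnumber the cycles enough to avoid pathological symmetries; for small $n$ relative to $g$ the map can fail to be surjective because highly symmetric ``hairless'' configurations have no free leg to cut. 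I would therefore prove injectivity in general and surjectivity precisely when $n\geq g$, and then transport the chain-level isomorphism to cohomology using that $\F_{g,n}^{+}$ (resp.\ $\F_{g,n}^{-}$) computes $H^\bullet(\MG_{g,n};\Q)$ (resp.\ with coefficients in $\tQ$), with the $\SG_n$-action descending as in Theorem~\ref{thm:forested_graph_complex}.

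Alternatively, if a purely combinatorial chain-level bijection proves too delicate to control uniformly in $k$, I would fall back on a more structural argument: interpret the sequence $\MG_{g,n}$ as a functor on a category of finite sets (a representation-stability / $\mathrm{FI}$-type framework, consistent with the hint in the acknowledgements crediting Galatius and Willwacher), show that $H^k(\MG_{g,\bullet};\Q_\rho)$ is a finitely generated $\mathrm{FI}$-module with generation degree bounded by $g$, and then invoke that the invariants (the multiplicity of the trivial $\SG_n$-representation) of a finitely generated $\mathrm{FI}$-module are eventually constant with an explicit stable range governed by the generation degree. The obstacle there shifts to establishing the $\mathrm{FI}$-module structure and the degree-$g$ generation bound, but this has the advantage of giving the degree-independent range $n\geq g$ cleanly and of covering both coefficient systems $\Q$ and $\tQ$ simultaneously.
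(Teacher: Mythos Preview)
Your plan does not match the paper's argument, and your primary approach has a genuine gap.

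The paper's proof is entirely group-theoretic and does not touch the forested graph complex at all. It uses the Lyndon--Hochschild--Serre spectral sequence for the extension $1\to F_g^n\to \PA_{g,n}\to \Out(F_g)\to 1$. After applying the K\"unneth formula to $H^q(F_g^n;\Q_\rho)$ and Schur--Weyl duality, one finds that projecting the $E_2$ page to $\SG_n$-invariants collapses the coefficient module to the $q$-th exterior power of $\mathrm H=H^1(F_g)$: $(E_2^{p,q})^{\SG_n}=H^p(\Out(F_g);\bigwedge^q\mathrm H)$ (and $\tQ\otimes\bigwedge^q\mathrm H$ in the twisted case). Since $\bigwedge^q\mathrm H=0$ for $q>g$, the entire invariant $E_2$ page is independent of $n$ as soon as $n\ge g$, and the spectral sequence comparison theorem finishes the proof. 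No explicit stabilization map is constructed; the paper remarks that it would be interesting to find one and, separately, that a graph-cohomology proof is open.

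Your Approach~1 aims precisely at what the paper leaves open, and its central claim is false as stated. The $\SG_n$-invariants of $C_k(\F_{g,n}^\pm)$ are spanned by $(\pm)$-orientable forested graphs of rank $g$ with $n$ \emph{unlabeled} legs and $k$ forest edges, and the number of such isomorphism classes genuinely grows with $n$: there are strictly more places to attach an additional hair, not a bijection's worth. So $\dim C_k(\F_{g,n}^\pm)^{\SG_n}$ does \emph{not} match $\dim C_k(\F_{g,n+1}^\pm)^{\SG_{n+1}}$ degree by degree; only the alternating sum stabilizes. Any chain-level proof would therefore need a quasi-isomorphism to a smaller subcomplex, not a bijection of generators, and you have not indicated what that subcomplex would be.

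Your Approach~2 is closer in spirit to the cited \cite{saied2015fi}, but standard $\mathrm{FI}$-module finite generation gives a stable range depending on the generation and relation degrees of each $H^k$ separately, hence a priori on $k$. Obtaining the uniform bound $n\ge g$ for all $k$ is exactly the content supplied by the spectral sequence computation above (the coefficient module vanishes for $q>g$); without that input, the $\mathrm{FI}$ argument alone does not yield the degree-independent range.
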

This statement is a refinement of the known 
\emph{representational stability} \cite{church2013representation} of $\MG_{g,n}$, which was shown to hold in \cite{conant2016assembling}. 
In contrast to those previous results, Theorem~\ref{thm:stab} holds independently of the cohomological degree $k$.
We will prove this theorem below in Section~\ref{sec:lerayserre}
using an argument that is based on the  Lyndon--Hochschild--Serre spectral sequence and 
closely related to lines of thought in \cite{conant2016assembling} and \cite{saied2015fi}.

Unfortunately, our proof of Theorem~\ref{thm:stab} gives neither a concrete description of the stable cohomologies
$ H^\bullet(\MG_{g,\infty};\Q)^{\SG_\infty} $
and 
$ H^\bullet(\MG_{g,\infty};\tQ)^{\SG_\infty} $
nor gives explicit stabilization maps. A candidate for such a map is a generalization of the injection $H^\bullet(\Out(F_g);\Q) \rightarrow H^\bullet(\Aut(F_g);\Q)$ from Theorem~1.4 of \cite{conant2016assembling} to arbitrarily many legs.
The first few terms of the Euler characteristics associated to these stable cohomologies are tabulated in Table~\ref{tab:stable}.
The values are remarkably small in comparison to the value of the Euler characteristic of $\Out(F_g)$ for the respective rank (see the top and bottom row of Table~\ref{tab:eEeven} for a direct comparison). Empirically, the Euler characteristics of the stable cohomologies appear to grow exponentially and not super-exponentially.
Our argument for Theorem~\ref{thm:stab} does not give any hint why these Euler characteristics  are so small.
It would also be interesting to prove Theorem~\ref{thm:stab} using graph cohomology methods as this kind of stabilization appears to be a distinguished feature of Lie- and forest graph cohomology. For instance, commutative graph cohomology with legs does not stabilize in the strong sense observed here \cite{turchin2017commutative}.

\begin{table}
\renewcommand{\arraystretch}{1.5}
\begin{centering}
\begin{tabular}{c|r r r r r r r r r r r r r}
$g$& $1$&$2$&$3$&$4$&$5$&$6$&$7$&$8$&$9$&$10$&$11$&$12$&$13$ \\
\hline
$e(\MG_{g,\infty}^{\SG_\infty})$& $1$&$1$&$2$&$1$&$2$&$3$&$11$&$0$&$18$&$-7$&$71$&$-102$&$295$ \\
$e^{\mathrm{odd}}(\MG_{g,\infty}^{\SG_\infty})$& $0$&$1$&$-2$&$1$&$-2$&$3$&$-11$&$0$&$-18$&$-7$&$-71$&$-102$&$-295$ \\
\end{tabular}
\end{centering}
\caption{Euler characteristics for $H^\bullet(\MG_{n,\infty}; \Q)^{\SG_\infty}$ and $H^\bullet(\MG_{n,\infty}; \tQ)^{\SG_\infty}.$}
\label{tab:stable}
\end{table}

\subsection{Analogy to Artin's braid group}
The observed large-$n$ stabilization carries similarities with the cohomology of Artin's \emph{braid group} $B_n$ of 
equivalence classes of $n$-braids.

For a graph $G$ of rank $g$ with $n$ legs, let $\A_{g,n}$ be the group of homotopy classes of self-homotopy equivalences of $G$ that only fix the legs as a set and not point-wise, i.e.~$\A_{g,n}=\pi_0(\mathrm{HE}(G,\partial G)).$
By looking only at the action of $\A_{g,n}$ on the leg-labels $\{1,\ldots,n\}$, we get a surjective map to the symmetric group $\SG_n$, i.e.
\begin{align*} 1 \rightarrow \PA_{g,n} \rightarrow \A_{g,n} \rightarrow \SG_n \rightarrow 1, \end{align*}
where the `pure' group, $\PA_{g,n}$, is the subgroup of $\A_{g,n}$ that fixes the legs point-wise as defined in the introduction.
Analogously, the braid group $B_n$ maps surjectively to $\SG_n$. So,
$$
1\rightarrow P_n \rightarrow B_n \rightarrow \SG_n \rightarrow 1,
$$
where the kernel $P_n$ is the \emph{pure braid group}.
Whereas the braid groups $B_n$ exhibit 
\emph{homological stability},
i.e.~if $n\geq 3$, then $H_k(B_n;\Q) \cong H_k(B_{n+1};\Q)$ for all $k$ \cite{arnol1969cohomology}, the pure braid groups $P_n$ only satisfy representational stability \cite{church2013representation}. 
For instance, $H^\bullet(P_n;\Q)$ is an exterior algebra on $\binom{n}{2}$ generators modulo a $3$-term relation~\cite{arnol1969cohomology} and $H^\bullet(\PA_{1,n};\Q)$ is the even degree part of an exterior algebra on $n-1$ generators~\cite{conant2016assembling}. The numbers of generators of both algebras increase with~$n$.
The cohomology of $\A_{g,n}$ is equal to the 
$\SG_n$-invariant cohomology of $\MG_{g,n}$. So, analogously to the stabilization of $H^\bullet(B_n)$, Theorem~\ref{thm:stab} implies the large-$n$ cohomological stability of $\A_{g,n}$.

\subsection{The Lyndon--Hochschild--Serre spectral sequence}
\label{sec:lerayserre}

Our proof of Theorem~\ref{thm:stab} 
makes heavy use of tools from \cite{conant2016assembling}.
Following~\cite{conant2016assembling}, we abbreviate $\mathrm H = H^1(F_g)$ and think of it as an $\Out(F_g)$-module. The action of $\Out(F_g)$ on $\mathrm H$ factors through the action of $\GL_g(\Z)$ on $\mathrm H$. The \emph{q-th exterior power} of $\mathrm H$ is denoted as $\bigwedge \nolimits^q \mathrm H$. As $\rank H = g$, the determinant representation is recovered by $\bigwedge \nolimits^g \mathrm H = \tQ$. As before, we only work with modules over rational coefficients.

\begin{proposition}
\label{prop:spectral}
Fix $g,n$ with $g \geq 2$, $n \geq 0$.
There are two Lyndon--Hochschild--Serre spectral sequences with $E_2$ pages given by
\begin{align*} E_2^{p,q} &= H^p\left(\Out(F_g); \bigwedge\nolimits^q \mathrm H\right) \\
\widetilde {E}_2^{p,q} &= H^p\left(\Out(F_g); \tQ \otimes \bigwedge\nolimits^q \mathrm H\right) \end{align*}
for $p,q \geq 0$ with $p \leq 2g-3$ and $q \leq \min(g,n)$,
and $E_2^{p,q} = \widetilde {E}_2^{p,q} = 0$ for all other values of $p,q$.
Both spectral sequences converge:
$E_2^{p,q} \Rightarrow H^{p+q}(\PA_{g,n};\Q)^{\SG_n}$ and 
$\widetilde {E}_2^{p,q} \Rightarrow H^{p+q}(\PA_{g,n};\tQ)^{\SG_n}$.
\end{proposition}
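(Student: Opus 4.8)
The plan is to realize both spectral sequences as the $\SG_n$-invariant parts of a single Lyndon--Hochschild--Serre spectral sequence attached to the group extension $1 \to F_g^n \to \PA_{g,n} \to \Out(F_g) \to 1$ recalled in Section~\ref{sec:asy}. Treating $\Q_\rho \in \{\Q,\tQ\}$ as an $\Out(F_g)$-module pulled back to $\PA_{g,n}$ (so that the normal subgroup $F_g^n$ acts trivially), this spectral sequence reads $E_2^{p,q} = H^p(\Out(F_g); H^q(F_g^n;\Q)\otimes\Q_\rho) \Rightarrow H^{p+q}(\PA_{g,n};\Q_\rho)$. The entire package is equivariant for the outer action of $\SG_n = \A_{g,n}/\PA_{g,n}$, which permutes the $n$ factors of $F_g^n$. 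Because $\Q$ has characteristic $0$ and $\SG_n$ is finite, taking $\SG_n$-invariants is an exact functor and hence commutes with passing to cohomology and with forming the spectral sequence; applying it termwise yields a spectral sequence $(E_2^{p,q})^{\SG_n} \Rightarrow H^{p+q}(\PA_{g,n};\Q_\rho)^{\SG_n}$, which is exactly the object we want. Convergence is automatic since $\PA_{g,n}$ has finite rational cohomological dimension (it is rationally $\MG_{g,n}$) and $F_g^n$, $\Out(F_g)$ both have finite cohomological dimension, so only finitely many terms are nonzero.

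\textbf{The key computation.} The heart of the argument is identifying the invariant coefficient module $(H^q(F_g^n;\Q)\otimes\Q_\rho)^{\SG_n}$. Using $H^\bullet(F_g;\Q) = \Q\oplus\mathrm H$ with $\mathrm H = H^1(F_g)$ in degree $1$ together with the Künneth theorem, one has $H^q(F_g^n;\Q) = \bigoplus_{|S|=q}\bigotimes_{i\in S}\mathrm H$, on which $\SG_n$ permutes the tensor factors \emph{with the Koszul signs} forced by the graded-commutative cup product. I would recognize this as the induced representation $\operatorname{Ind}_{\SG_q\times\SG_{n-q}}^{\SG_n}\big((\mathrm H^{\otimes q}\otimes\operatorname{sgn})\boxtimes\Q\big)$, and then Frobenius reciprocity collapses its $\SG_n$-invariants to $(\mathrm H^{\otimes q}\otimes\operatorname{sgn})^{\SG_q} = \bigwedge^q\mathrm H$, the exterior power appearing precisely because of those Koszul signs. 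This gives $E_2^{p,q} = H^p(\Out(F_g);\bigwedge^q\mathrm H)$ in the untwisted case.

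\textbf{Twisted case and ranges.} For $\Q_\rho = \tQ$ I would first show that $\SG_n$ acts trivially on $\tQ$. The leg-forgetting surjection $\PA_{g,n}\to\Out(F_g)$ extends to a map $\A_{g,n}\to\Out(F_g)$ which is a \emph{retraction} onto $\Out(F_g) = \PA_{g,n}/F_g^n \subset \A_{g,n}/F_g^n$; realizing $\SG_n$ as the complementary factor, it acts trivially on the pulled-back module $\tQ$. Hence $(H^q(F_g^n;\Q)\otimes\tQ)^{\SG_n} = \tQ\otimes\bigwedge^q\mathrm H$ and $\widetilde E_2^{p,q} = H^p(\Out(F_g);\tQ\otimes\bigwedge^q\mathrm H)$. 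The stated support then follows from two standard facts: $\bigwedge^q\mathrm H = 0$ for $q>g$ while $H^q(F_g^n;\Q)=0$ for $q>n$, forcing $q\le\min(g,n)$; and $H^p(\Out(F_g);-)$ vanishes for $p>2g-3$ since the virtual cohomological dimension of $\Out(F_g)$ is $2g-3$ by Culler--Vogtmann, via the $(2g-3)$-dimensional spine of Outer space.

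\textbf{Main obstacle.} I expect the genuine difficulty to lie in the bookkeeping of the $\SG_n$-action on coefficients rather than in the spectral sequence machinery. Two points need care. First, getting the Koszul sign right, so that the invariants yield the exterior power $\bigwedge^q\mathrm H$ and not the symmetric power; this sign is the whole reason the correct module appears. Second, justifying the triviality of the $\SG_n$-action on $\tQ$: the naive claim is false for individual graph symmetries (a leg transposition can be realized by a loop swap of determinant $-1$), so the correct justification is the retraction/splitting argument above, which makes the coefficient action well defined up to inner automorphisms and trivial on $\ker$ of the retraction. Once these are settled, exactness of $\SG_n$-invariants together with finite cohomological dimension deliver the two convergent spectral sequences with the asserted $E_2$-pages and vanishing ranges.
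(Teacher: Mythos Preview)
Your proposal is correct and follows essentially the same route as the paper: both set up the Lyndon--Hochschild--Serre spectral sequence for $1\to F_g^n\to\PA_{g,n}\to\Out(F_g)\to 1$, identify $H^q(F_g^n;\Q)$ via K\"unneth as the induced representation $\Ind_{\SG_q\times\SG_{n-q}}^{\SG_n}(\mathrm H^{\wedge q}\otimes V_{(n-q)})$, and then pass to $\SG_n$-invariants. The only noteworthy difference is how the invariants are extracted: the paper invokes Schur--Weyl duality to decompose $\mathrm H^{\wedge q}=\bigoplus_{\lambda\vdash q}W_\lambda\otimes V_{\lambda'}$ and then observes that only $\lambda=(1^q)$ survives projection to the trivial $\SG_n$-isotype, whereas you go directly via Frobenius reciprocity to $(\mathrm H^{\otimes q}\otimes\operatorname{sgn})^{\SG_q}=\bigwedge^q\mathrm H$. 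Your route is slightly more elementary and to the point; the paper's route has the advantage of displaying the full $\SG_n$-decomposition of the $E_2$-page along the way, which is relevant elsewhere in \cite{conant2016assembling}. Your explicit justification that the outer $\SG_n$-action on the pulled-back module $\tQ$ is trivial (via the retraction $\A_{g,n}\to\Out(F_g)$) is a point the paper leaves implicit, and your caution there is well placed.
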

The argument works along the lines of Section~3.2 of
\cite{conant2016assembling} followed by an application of Schur--Weyl duality and the projection to the $\SG_n$-invariant cohomology.
A similar argument can be found in \cite[Lemma~4.1]{saied2015fi} where it is used to prove representational stability of $H^\bullet(\PA_{g,n};\Q)$. 
\begin{proof}
For $g \geq 2$ and $n \geq 0$, the Lyndon--Hochschild--Serre spectral sequence associated to the group extension $1 \rightarrow F_g^n \rightarrow \PA_{g,n} \rightarrow \Out(F_g) \rightarrow 1$ 
is a first-quadrant spectral sequence with the second page given by 
$E_2^{p,q} = H^p(\Out(F_g); H^q(F_g^n;\Q)) \Rightarrow H^{p+q}(\PA_{g,n};\Q)$ (see~\cite[Section~3.2]{conant2016assembling}).
The virtual cohomological dimension of $\Out(F_n)$ is $2g-3$ \cite{CV},
so if $p > 2g-3$, then $E_2^{p,q} = \widetilde {E}_2^{p,q} = 0$.
By the Künneth formula, the $\SG_n$-module $H^q(F_g^n;\Q)$ 
vanishes if $q > n$ and for $0\leq q\leq n$ it 
is obtained by inducing the $\SG_{q}\times \SG_{n-q}$-module $\mathrm H^{\wedge q} \otimes V_{(n-q)}$ to $\SG_n$, (see~\cite[Lemma~3.4]{conant2016assembling} for details),
$$H^q(F_g^n;\Q) = 
\Ind^{\SG_n}_{\SG_q\times \SG_{n-q}} \left( H^{\wedge q} \otimes V_{(n-q)}\right),$$
where $V_{(n-q)}$ is the trivial representation of $\SG_{n-q}$ and $\mathrm H^{\wedge q}$ is $\mathrm H^{\otimes q}\otimes V_{(1^q)}$ with $V_{(1^q)}$ the alternating representation of $\SG_q$ and $\SG_q$ acts on $H^{\otimes q}$ by permuting the entries of the tensor product.

We can set up a similar spectral sequence with coefficients twisted by $\tQ$ with second page $\widetilde E_2^{p,q} = H^p(\Out(F_g); H^q(F_g^n;\tQ)) \Rightarrow H^{p+q}(\PA_{g,n};\tQ)$.
Note that $F_g^n$ acts trivially on $\tQ$. So, applying the Künneth formula to expand $H^q(F_g^n;\tQ)$ as an $\SG_n$-module gives 
$$H^q(F_g^n;\tQ) = \tQ \otimes 
\Ind^{\SG_n}_{\SG_q\times \SG_{n-q}} \left( H^{\wedge q} \otimes V_{(n-q)}\right),$$ which only differs by the $GL_g(\Z)$-action on $\tQ$. 
Schur--Weyl duality gives the irreducible decomposition of $\mathrm H^{\wedge q}$ as a module over $\GL(\mathrm H)\times \SG_q$ (see, e.g., \cite[Section~3.1]{conant2016assembling}):
\begin{align*} \mathrm H^{\wedge q} = \bigoplus_{\lambda \parts q} W_\lambda \otimes V_{\lambda'}, \end{align*}
where we sum over all partitions $\lambda$ of $q$ with at most $g$ rows  and $\lambda'$ is the transposed partition. Here,
$W_\lambda$ and $V_\lambda$ are the irreducible $\GL(\mathrm H)$- and $\SG_q$-representations associated to $\lambda$.

As $\SG_n$ only acts on the coefficients in $E_2^{p,q}$, we get
\begin{align*} E_2^{p,q} &= H^p\left(\Out(F_g); \Ind^{\SG_n}_{\SG_q\times \SG_{n-q}} \left( H^{\wedge q} \otimes V_{(n-q)}\right) \right) \\
 &= \bigoplus_{\lambda \parts q} H^p\left(\Out(F_g); W_\lambda \right) \otimes \Ind^{\SG_n}_{\SG_q\times \SG_{n-q}} \left( V_{\lambda'} \otimes V_{(n-q)}\right). \end{align*}
If we project to the trivial $\SG_n$-representation,
only the partition $\lambda = (1^q)$ contributes. Hence,
\begin{align*} (E_2^{p,q})^{\SG_n} &= H^p\left(\Out(F_g); W_{(1^q)} \right). \end{align*}
The representation $W_{(1^q)}$ is the $q$-th exterior power of the defining representation $\mathrm H$. It vanishes if $q > \rank H = g$.
The argument works analogously for $\widetilde E_2^{p,q}$.
\end{proof}

\begin{proof}[Proof of Theorem~\ref{thm:stab}]
For $n \geq g \geq 2$, the $E_2$ pages of the spectral sequences in 
Proposition~\ref{prop:spectral}
are independent of $n$.
Hence, by the spectral sequence comparison theorem, also $H^k(\PA_{g,n},\Q)^{\SG_n}$ and $H^k(\PA_{g,n},\tQ)^{\SG_n}$ must be independent of $n$.
\end{proof}

In the stable regime, the Euler characteristics $e(\MG_{g,n}^{\SG_n})$ and $e^\mathrm{odd}(\MG_{g,n}^{\SG_n})$ are equal up to a sign factor (see Table~\ref{tab:stable}). 
This can be explained as another  consequence of Proposition~\ref{prop:spectral},
and the following two technical lemmas.

For a graph $G$, the first cohomology $H^1(G) =H^1(G;\Q)$ is a representation of its automorphism group $\Aut(G)$. Let $H^1(G)^*$ be the corresponding dual representation.
\begin{lemma}
\label{lmm:dualrep}
$H^1(G) \cong H^1(G)^*$
as $\Aut(G)$-representations.
\end{lemma}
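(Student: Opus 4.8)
The plan is to exhibit an explicit isomorphism between $H^1(G)$ and its dual by constructing a non-degenerate $\Aut(G)$-invariant bilinear form on $H^1(G)$. Since $G$ is a graph (a one-dimensional CW complex) of rank $g$, we have $H^1(G;\Q) \cong \Q^g$, and the natural candidate for such a form is the intersection-type pairing coming from the cycle structure. More precisely, $H_1(G;\Q)$ is spanned by oriented cycles, and the cup-product pairing on cohomology (or equivalently a pairing built from the incidence of edges) should serve as the required invariant form.

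First I would recall that for a graph, $H^1(G;\Q)$ is canonically dual to $H_1(G;\Q)$ via the universal coefficient theorem, so it suffices to produce an $\Aut(G)$-invariant non-degenerate bilinear form on $H_1(G;\Q)$. The cleanest approach is to use the standard inner product on the space of $1$-chains $C_1(G;\Q)$ in which the oriented edges form an orthonormal basis; this pairing is manifestly preserved by any graph automorphism, since an automorphism permutes edges (possibly with a sign from reorientation) and hence acts orthogonally on $C_1(G;\Q)$. Restricting this positive-definite form to the subspace $H_1(G;\Q) = \ker \partial \subseteq C_1(G;\Q)$ keeps it non-degenerate, and the restriction is still $\Aut(G)$-invariant because the cycle space is an $\Aut(G)$-invariant subspace. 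A non-degenerate invariant bilinear form on a representation $V$ is exactly an isomorphism $V \cong V^*$ of representations, so this yields $H_1(G) \cong H_1(G)^*$, and dualizing gives the claimed $H^1(G) \cong H^1(G)^*$.

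The main subtlety to address is the interaction of automorphisms with edge orientations: an automorphism of $G$ may reverse the orientation of some edges, acting on $C_1(G;\Q)$ by a signed permutation matrix rather than an honest permutation matrix. I would emphasize that signed permutation matrices are still orthogonal, so the standard edge inner product remains invariant regardless of orientation reversals; this is the one point where a careless argument could go wrong. One should also confirm that $\Aut(G)$ here refers to automorphisms of the bare graph acting on cohomology, consistent with the statement, and that legs play no role in the cycle space (legs are univalent and contribute no cycles), so the argument is unaffected by the presence or labeling of legs.

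An alternative, more conceptual route is to invoke Poincar\'e–Lefschetz-style duality or simply the fact that the defining representation $\mathrm H = H^1(F_g)$ of $\GL_g(\Z)$ carries an invariant symplectic or symmetric structure only up to the relevant subgroup; however, since $\Aut(G)$ acts through a finite group, one can sidestep any delicate duality theory and instead use the elementary averaging/orthogonality observation above, which is self-contained. I expect the proof to be short: the real content is the single observation that the edge-orthonormal inner product on $1$-chains is $\Aut(G)$-invariant and restricts non-degenerately to the cycle space, and the hard part is merely stating this cleanly enough to handle orientation signs correctly.
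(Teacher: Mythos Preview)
Your proposal is correct and is essentially the same argument as the paper's: the paper invokes the \emph{graph Laplacian} as an $\Aut(G)$-invariant symmetric positive-definite form on $H_1(G)$ and cites a reference, while you construct this very form explicitly as the restriction to $\ker\partial$ of the edge-orthonormal inner product on $C_1(G;\Q)$. Your observation that automorphisms act by signed permutation matrices (hence orthogonally) is exactly the invariance check hidden behind the paper's citation; the one throwaway remark about a ``cup-product pairing'' should be dropped, since $H^2(G)=0$ for a graph, but you immediately supersede it with the correct construction anyway.
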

\begin{proof}
Each graph $G$ comes with an $\Aut(G)$-invariant 
symmetric positive definite bilinear form $H_1(G) \otimes H_1(G) \rightarrow \Q$, \emph{the graph Laplacian} (see, e.g.,~\cite[3.1.1]{baker2011jacobian}). Dually, we have an $\Aut(G)$-invariant pairing on $H^1(G)$ giving the stated isomorphism of $\Aut(G)$-representations.
\end{proof}

Let $\mathrm H^*$ be the $\Out(F_g)$-representation dual to $\mathrm H$
and $e(H^{\bullet}\left(G;M\right)) = \sum\limits_p (-1)^p \dim (H^{p}\left(G;M\right))$.
\begin{lemma}
\label{lmm:group_cohom_dual}
For $g \geq 2$ and $q \geq 0$,
$e\left( H^{\bullet}\left(\Out(F_g);\bigwedge\nolimits^q \mathrm H\right) \right)= e\left( H^{\bullet}\left(\Out(F_g);\bigwedge\nolimits^q \mathrm H^*\right) \right) $.
\end{lemma}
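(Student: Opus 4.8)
The plan is to write both Euler characteristics as one and the same weighted sum over rank-$g$ graphs, and then to compare the summands one graph at a time using Lemma~\ref{lmm:dualrep}.

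First I would record how the coefficient module enters. For any $\Out(F_g)$-module $M$ the group cohomology $H^\bullet(\Out(F_g);M)$ is computed by a forested graph complex with local coefficients in $M$ — the natural generalization of the complexes $\F^{\pm}_{g,0}$ of Definition~\ref{def:fgc}, which are recovered for $M = \Q$ and $M = \tQ = \bigwedge^g\mathrm H$. As in Lemma~\ref{lmm:finite} and Proposition~\ref{prop:equivC}, these chain groups are finite dimensional and the Euler characteristic can be computed on the chain level. Carrying out the averaging exactly as in the proof of Theorem~\ref{thm:equiv}, but with the extra coefficient $M$ sitting at each graph, yields constants $c_{G,\Phi} \in \Q$ and signs $\xi^+(G,\Phi,\alpha) = \pm 1$, depending only on the forested graph $(G,\Phi)$, such that
$$ e\!\left(H^\bullet(\Out(F_g); M)\right) = \sum_{[G,\Phi]} c_{G,\Phi} \sum_{\alpha \in \Aut(G,\Phi)} \xi^+(G,\Phi,\alpha)\, \Tr\!\left(\alpha \mid M|_G\right), $$
where the sum runs over isomorphism classes of rank-$g$ forested graphs without legs, and $M|_G$ is $M$ viewed as an $\Aut(G,\Phi)$-representation via $\Aut(G,\Phi) \to \Aut(G) \to \Out(F_g) \to \GL(\mathrm H)$. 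The essential observation is that the weights $c_{G,\Phi}$ and the orientation signs $\xi^+$ are \emph{independent} of the coefficient module $M$; only the characters $\alpha \mapsto \Tr(\alpha \mid M|_G)$ change with $M$.

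Next I would specialize to $M = \bigwedge^q\mathrm H$ and $M = \bigwedge^q\mathrm H^*$ and compare summand by summand. Using the identification $\mathrm H = H^1(G)$ of $\Aut(G)$-representations, these restrict to $\bigwedge^q H^1(G)$ and $\bigwedge^q(H^1(G)^*)$ respectively. By Lemma~\ref{lmm:dualrep} we have $H^1(G) \cong H^1(G)^*$ as $\Aut(G)$-representations; applying the functor $\bigwedge^q$ gives an isomorphism $\bigwedge^q H^1(G) \cong \bigwedge^q(H^1(G)^*)$ of $\Aut(G)$-representations, hence also of $\Aut(G,\Phi)$-representations. Isomorphic representations have the same character, so $\Tr(\alpha \mid \bigwedge^q H^1(G)) = \Tr(\alpha \mid \bigwedge^q(H^1(G)^*))$ for every automorphism $\alpha$. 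Consequently each forested graph contributes the same term to the two Euler characteristics, and summing over $[G,\Phi]$ gives the asserted equality.

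The main obstacle is the first step: setting up the forested graph complex with a local system $M$ and verifying that the chain-level Euler characteristic has the displayed form with the correct, $M$-independent orientation signs. Fortunately the precise values of $c_{G,\Phi}$ and $\xi^+$ never enter the argument — all that matters is that they do not depend on $M$, so that the comparison collapses to the character identity furnished by Lemma~\ref{lmm:dualrep} together with the functoriality of $\bigwedge^q$.
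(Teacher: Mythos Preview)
Your proposal is correct and follows essentially the same approach as the paper: both set up a cochain model for $H^\bullet(\Out(F_g);M)$ indexed by forested graphs, observe that the $M$-dependence at each $(G,\Phi)$ factors through the $\Aut(G,\Phi)$-representation $H^1(G)$, and then invoke Lemma~\ref{lmm:dualrep} graph by graph. The paper phrases this via the spine of outer space and the decomposition $\mathrm{Hom}_{\Out(F_g)}(C_\bullet(K_g),M)\cong\bigoplus_{[G,\Phi]}(\sign_{(G,\Phi)}\otimes M)^{\Aut(G,\Phi)}$, whereas you write the resulting dimensions as character sums; these are equivalent formulations of the same argument.
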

\begin{proof}
Using standard arguments that relate Culler--Vogtmann outer space to the rational group cohomology of $\Out(F_n)$, %
we are looking for a cochain model for the stated cohomologies.
Let $K_g$ be the \emph{spine of outer space},
i.e.~$K_g$ is a cubical complex in which each $k$-cube
corresponds to an isomorphism class of a forested graph $(G,\Phi,\mu)$ 
which is equipped with a \emph{marking}, an isomorphism $\mu:\pi_1(G) \rightarrow F_g$. Each cube is oriented by ordering the edges in $\Phi$.
The group $\Out(F_g)$ acts on $K_g$ by composition with the marking.
This action is not free, but it has finite stabilizers. 
Most importantly, $K_g$ is contractible \cite{CV}.
Let $M$ be $\bigwedge\nolimits^q \mathrm H$ or $\bigwedge\nolimits^q \mathrm H^*$, then by standard arguments
$H^{\bullet}\left(\Out(F_g);M\right) = H^{\bullet}\left( \mathrm{Hom}_{\Out(F_g)} (C_\bullet(K_g), M) \right)$
(see, e.g., \cite[Ch.~7]{Br}).
The action of $\Out(F_g)$ on the tuples $(G,\Phi,\mu)$ fixes the 
isomorphism class of the forested graph and acts transitively on the marking.
The stabilizer of this action is $\Aut(G,\Phi)$.
Hence,
\begin{align*} \mathrm{Hom}_{\Out(F_g)} (C_\bullet(K_g), M) = \mathrm{Hom} (C_\bullet(K_g), M)^{\Out(F_g)} = \bigoplus_{[G,\Phi]} \left( \sign_{(G,\Phi)} \otimes M \right)^{\Aut(G,\Phi)}, \end{align*}
where we take the sum over all isomorphism classes of 
forested graphs of rank $g$
or equivalently over all $\Out(F_g)$-orbits of 
isomorphism classes of marked forested graphs of rank $g$.
Here, $\sign_{(G,\Phi)}$ is the orientation module, the one-dimensional representation of $\Aut(G,\Phi)$ that is given by $\alpha \mapsto \sign(\alpha_\Phi)$ with $\alpha_\Phi$ the permutation that $\alpha \in \Aut(G,\Phi)$ induces on the forest edges. 
The group
$\Aut(G,\Phi)$ acts on $M$ via the map $\Aut(G,\Phi) \rightarrow \Out(F_g)$ that any
chosen marking representative $\pi_1(G)\rightarrow F_g$ induces.
As a $\Aut(G,\Phi)$-representation, $\mathrm H$ is hence 
equal to $H^1(G)$. Therefore, due to Lemma~\ref{lmm:dualrep},
we may replace the right-most $M$ in the displayed formula above with 
either $\bigwedge\nolimits^q H^1(G)$, or $\bigwedge\nolimits^q (H^1(G)^*) \cong (\bigwedge\nolimits^q H^1(G))^*$.
So, in the stated equality, both sides can be computed using cochain complexes with the same generators in each degree. The Euler characteristic only depends on the dimensions of the cochain spaces and not on the~differentials.
\end{proof}

\begin{proposition}
\label{prop:oddeven}
If $n \geq g \geq 2$, then $e(\MG_{g,n}^{\SG_n}) = (-1)^g e^\mathrm{odd}(\MG_{g,n}^{\SG_n})$.
\end{proposition}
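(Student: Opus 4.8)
The plan is to compute both $\SG_n$-invariant Euler characteristics directly from the two spectral sequences of Proposition~\ref{prop:spectral}, exploiting that a bounded spectral sequence has the same Euler characteristic as its abutment. Since $\MG_{g,n}$ is a rational classifying space of $\PA_{g,n}$, and since the differentials on every page of a spectral sequence preserve the alternating sum of dimensions, I would first record
\begin{align*}
e(\MG_{g,n}^{\SG_n}) &= \sum_{q=0}^{g} (-1)^q\, e\!\left(H^\bullet\!\left(\Out(F_g);\bigwedge\nolimits^q \mathrm H\right)\right),\\
e^\mathrm{odd}(\MG_{g,n}^{\SG_n}) &= \sum_{q=0}^{g} (-1)^q\, e\!\left(H^\bullet\!\left(\Out(F_g);\tQ\otimes\bigwedge\nolimits^q \mathrm H\right)\right).
\end{align*}
The outer sign is $(-1)^q$ because the $E_2$-entry in bidegree $(p,q)$ contributes with sign $(-1)^{p+q}$, and the inner $p$-sum is absorbed into $e(H^\bullet(\cdots))$. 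The range $0 \le q \le g$ is precisely where the hypothesis $n \ge g$ enters: it forces $\min(g,n)=g$, so the $q$-range is the full interval $\{0,\dots,g\}$, which is what makes the reindexing below symmetric.

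Second, I would feed in the algebraic identity that makes the two sums match. Because $\mathrm H$ is a rank-$g$ $\Out(F_g)$-module with $\bigwedge\nolimits^g \mathrm H = \tQ$, the wedge pairing $\bigwedge\nolimits^q \mathrm H \otimes \bigwedge\nolimits^{g-q}\mathrm H \to \bigwedge\nolimits^g \mathrm H$ is perfect and equivariant, giving the isomorphism of $\Out(F_g)$-modules
\begin{align*}
\bigwedge\nolimits^q \mathrm H \;\cong\; \bigwedge\nolimits^{g-q}\mathrm H^* \otimes \tQ.
\end{align*}
Since $\Out(F_g)$ acts on $\tQ$ through $\det \colon \GL_g(\Z) \to \{\pm1\}$, one has $\tQ \otimes \tQ \cong \Q$, whence $\tQ \otimes \bigwedge\nolimits^q \mathrm H \cong \bigwedge\nolimits^{g-q}\mathrm H^*$. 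Substituting this into the second display and reindexing $q \mapsto g-q$ pulls out a factor $(-1)^g$ and turns the odd sum into $(-1)^g\sum_{q=0}^{g}(-1)^q\, e(H^\bullet(\Out(F_g);\bigwedge\nolimits^q \mathrm H^*))$.

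Finally, I would invoke Lemma~\ref{lmm:group_cohom_dual}, which says that replacing $\mathrm H$ by its dual $\mathrm H^*$ leaves each term $e(H^\bullet(\Out(F_g);\bigwedge\nolimits^q \mathrm H))$ unchanged. This identifies the reindexed odd sum with $(-1)^g$ times the even sum, i.e.\ $e^\mathrm{odd}(\MG_{g,n}^{\SG_n}) = (-1)^g e(\MG_{g,n}^{\SG_n})$, which is the claim after multiplying through by $(-1)^g$. The main thing to get right is the equivariance of the exterior-power duality over $\Out(F_g)$ (not merely over $\GL(\mathrm H)$), together with the essential point that $\mathrm H$ and $\mathrm H^*$ need not be isomorphic as $\Out(F_g)$-modules: this is exactly why Lemma~\ref{lmm:group_cohom_dual}, rather than a naive self-duality of the coefficients, is needed, and why the whole argument must be phrased at the level of Euler characteristics rather than of individual cohomology groups.
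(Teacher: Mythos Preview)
Your proposal is correct and follows essentially the same argument as the paper: both compute the Euler characteristics at the $E_2$-page of the spectral sequences of Proposition~\ref{prop:spectral}, use the exterior-power duality $\tQ\otimes\bigwedge^q\mathrm H\cong\bigwedge^{g-q}\mathrm H^*$ (valid over $\GL_g(\Z)$, hence over $\Out(F_g)$ since the action factors through $\GL_g(\Z)$), reindex $q\mapsto g-q$, and then invoke Lemma~\ref{lmm:group_cohom_dual} to pass from $\mathrm H^*$ back to $\mathrm H$. Your closing remark that the argument must remain at the Euler-characteristic level because $\mathrm H$ and $\mathrm H^*$ need not be isomorphic as $\Out(F_g)$-modules is exactly the point of Lemma~\ref{lmm:group_cohom_dual}.
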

\begin{proof}
There is an isomorphism of $\GL_{g}(\Z)$-representations: 
$\tQ \otimes \bigwedge\nolimits^{q} \mathrm H = \bigwedge\nolimits^{g} \mathrm H \otimes \bigwedge\nolimits^{q} \mathrm H \cong \bigwedge\nolimits^{g-q} \mathrm H^*$.
This can be seen by computing the characters of both 
$\bigwedge\nolimits^{g}\mathrm H \otimes \bigwedge\nolimits^{q} \mathrm H$ and $\bigwedge\nolimits^{g-q} \mathrm H^*$
and by observing that they differ by a multiple of the  character of $(\bigwedge\nolimits^{g})^{\otimes2}$,
which is the same as the trivial representation of $\GL_g(\Z)$, because $\det(h)^2 = 1$ for each $h\in \GL_g(\Z)$ (see, e.g., \cite[Ch.~7.A.2]{stanley1997enumerative2}).

Hence, for $n \geq g \geq 2$, we have for the $E_2$ pages of the spectral sequences in Proposition~\ref{prop:spectral}:
$$\widetilde E_2^{p,q} = H^{p}\left(\Out(F_g);\tQ \otimes \bigwedge\nolimits^q \mathrm H\right)
\cong 
H^{p}\left(\Out(F_g);\bigwedge\nolimits^{g-q} \mathrm H^*\right).
$$
Therefore, it follows from Lemma~\ref{lmm:group_cohom_dual} that $\sum_p (-1)^p \dim \widetilde E_2^{p,q} = \sum_p (-1)^p \dim E_2^{p,g-q}$.
So,
\begin{gather*} e(\MG_{g,n}^{\SG_n}) = \sum_{p,q} (-1)^{p+q} \dim E_2^{p,q} = \sum_{p,q} (-1)^{p+g-q} \dim E_2^{p,g-q} = (-1)^g e^\mathrm{odd}(\MG_{g,n}^{\SG_n}). \qedhere \end{gather*}
\end{proof}
Apart from the (up-to-sign) equality of the Euler characteristics, our argument does not tell us how even and odd $\SG_n$-invariant stable $n\rightarrow \infty$ cohomology of $\MG_{g,n}$ are related.

Moreover, the $E_2$ page of the spectral sequences in Proposition~\ref{prop:spectral} contains the whole (twisted) cohomology of $\Out(F_g)$ (e.g.~
 $E_2^{p,0} = H^p(\Out(F_g);\Q)$).
So, we also do not explain why the Euler characteristics of the large-$n$ stable  $\SG_n$-invariant cohomologies of $\MG_{g,n}$ are so small
in comparison to the Euler characteristic of $\Out(F_g)$.

\appendix
\section{Tables of Euler characteristics of \texorpdfstring{$\MG_{g,n}$}{MGgn}}
Larger versions of the following Tables~\ref{tab:eeven}-\ref{tab:eEschurodd} are included in the ancillary files for the arXiv version of this article:
Values  %
of the Euler characteristics
$e(\MG_{g,n})$,
$e^\mathrm{odd}(\MG_{g,n})$,
$e(\MG_{g,n}^{\SG_n})$ and 
$e^\mathrm{odd}(\MG_{g,n}^{\SG_n})$
for $g+n \leq 60$ can be found in \texttt{eMGgn.tsv}, \texttt{eMGgn-odd.tsv}, \texttt{eMGgn-modSn.tsv} and \texttt{eMGgn-modSn-odd.tsv}.
These larger tables are readable as plain text or with standard spreadsheet software.
The files \texttt{eOutFn.txt} and \texttt{eOutFn-odd.txt} list the values $e(\Out(F_g))$ and $e^\textrm{odd}(\Out(F_g))$ for all $g\leq 100$.
The full $\SG_n$-equivariant Euler characteristic is listed in the files \texttt{eMGgn-equiv.tsv} and \texttt{eMGgn-equiv-odd.tsv} that contain tables of the polynomials $e_{\SG_n}(\MG_{g,n})$ and $e_{\SG_n}^\mathrm{odd}(\MG_{g,n})$ for $g+n \leq 30$ respectively.

\newgeometry{hmargin=1cm,vmargin=2cm}
\begin{landscape}

\begin{table}
\begin{tabular}{c|r r r r r r r r r}
\backslashbox{$n$}{$g$}& 1&2&3&4&5&6&7&8&9 \\
\hline
$0$& &$1$&$1$&$2$&$1$&$2$&$1$&$1$&$-21$ \\
$1$& $1$&$1$&$1$&$2$&$0$&$3$&$3$&$31$&$154$ \\
$2$& $1$&$1$&$2$&$3$&$-2$&$1$&$-28$&$-153$&$-1486$ \\
$3$& $2$&$1$&$6$&$6$&$0$&$36$&$152$&$1423$&$12072$ \\
$4$& $4$&$0$&$18$&$4$&$-24$&$-86$&$-1062$&$-9474$&$-103392$ \\
$5$& $8$&$-4$&$61$&$-3$&$64$&$675$&$6421$&$72249$&$845821$ \\
$6$& $16$&$-19$&$202$&$-158$&$-69$&$-3453$&$-38028$&$-506827$&$-6971380$ \\
$7$& $32$&$-69$&$701$&$-831$&$2905$&$11182$&$253892$&$3616144$&$56742775$ \\
$8$& $64$&$-230$&$2438$&$-5135$&$9917$&$-124467$&$-1306559$&$-25952916$&$-459328520$ \\
$9$& $128$&$-734$&$8721$&$-25446$&$112518$&$47564$&$9957185$&$178180002$&$3726950202$ \\
$10$& $256$&$-2289$&$31602$&$-134879$&$552339$&$-4683027$&$-42459898$&$-1308692710$&$-29852809180$ \\
$11$& $512$&$-7039$&$116821$&$-670008$&$4149475$&$-9296152$&$399601667$&$8669878028$&$242171554559$ \\
$12$& $1024$&$-21460$&$437758$&$-3414254$&$22844193$&$-188955568$&$-1233422911$&$-65810827609$&$-1924630979085$ \\
$13$& $2048$&$-65064$&$1663481$&$-17022549$&$149941792$&$-725537667$&$16798999974$&$416483532392$&$15652884150733$ \\
$14$& $4096$&$-196559$&$6388202$&$-85672220$&$864112247$&$-8133442381$&$-25768675818$&$-3327304711052$&$-123449090799389$ \\
$15$& $8192$&$-592409$&$24759741$&$-427885725$&$5376583485$&$-41758325066$&$753773677302$&$19660027898985$&$1009591434254489$ \\
$16$& $16384$&$-1782690$&$96647478$&$-2144390153$&$31618003029$&$-367416474589$&$247507159657$&$-170405333570573$&$-7887831186821342$ \\
\end{tabular}
\caption{Euler characteristic $e(\MG_{g,n})$}
\label{tab:eeven}
\end{table}

\begin{table}
\begin{tabular}{c|r r r r r r r r r}
\backslashbox{$n$}{$g$}& 1&2&3&4&5&6&7&8&9 \\
\hline
$0$& &$0$&$0$&$-1$&$0$&$-1$&$-2$&$-8$&$-38$ \\
$1$& $0$&$0$&$-1$&$-1$&$-1$&$1$&$3$&$34$&$278$ \\
$2$& $-1$&$0$&$-3$&$-3$&$-5$&$-7$&$-45$&$-273$&$-2143$ \\
$3$& $-2$&$0$&$-8$&$-7$&$-9$&$20$&$172$&$1688$&$16279$ \\
$4$& $-4$&$-1$&$-24$&$-27$&$-66$&$-184$&$-1365$&$-12037$&$-127665$ \\
$5$& $-8$&$-5$&$-71$&$-81$&$-137$&$340$&$6153$&$80002$&$995425$ \\
$6$& $-16$&$-20$&$-228$&$-364$&$-1185$&$-5356$&$-47862$&$-568014$&$-7861828$ \\
$7$& $-32$&$-70$&$-743$&$-1394$&$-3536$&$3081$&$210751$&$3814347$&$61973273$ \\
$8$& $-64$&$-231$&$-2544$&$-6716$&$-28509$&$-166300$&$-1760727$&$-27489461$&$-492911760$ \\
$9$& $-128$&$-735$&$-8891$&$-29974$&$-118323$&$-144896$&$6985727$&$182953976$&$3900012883$ \\
$10$& $-256$&$-2290$&$-32028$&$-148035$&$-837525$&$-5638835$&$-67361954$&$-1348453174$&$-31209047062$ \\
$11$& $-512$&$-7040$&$-117503$&$-708621$&$-4206038$&$-13846788$&$214832720$&$8779760089$&$246950368646$ \\
$12$& $-1024$&$-21461$&$-439464$&$-3528385$&$-27300597$&$-211328036$&$-2704794269$&$-66861595436$&$-1987593802313$ \\
$13$& $-2048$&$-65065$&$-1666211$&$-17361527$&$-150529289$&$-834038152$&$5420792406$&$418887979427$&$15685889711601$ \\
$14$& $-4096$&$-196560$&$-6395028$&$-86682326$&$-934471725$&$-8667253394$&$-115615170716$&$-3355504362873$&$-127079507192857$ \\
$15$& $-8192$&$-592410$&$-24770663$&$-430902388$&$-5383163340$&$-44375995137$&$48464678629$&$19709309010324$&$997347475403633$ \\
$16$& $-16384$&$-1782691$&$-96674784$&$-2153412834$&$-32735332605$&$-380341928592$&$-5327400148291$&$-171167953029982$&$-8150861890475894$ \\
\end{tabular}
\caption{Euler characteristics $e^\mathrm{odd}(\MG_{g,n})$}
\label{tab:eodd}
\end{table}

\begin{table}
\begin{tabular}{c|r r r r r r r r r r r r r r r r}
\backslashbox{$n$}{$g$}& 1&2&3&4&5&6&7&8&9&10&11&12&13&14&15&16 \\
\hline
$0$& &$1$&$1$&$2$&$1$&$2$&$1$&$1$&$-21$&$-124$&$-1202$&$-10738$&$-112901$&$-1271148$&$-15668391$&$-208214777$ \\
$1$& $1$&$1$&$1$&$2$&$0$&$3$&$3$&$31$&$154$&$1405$&$12409$&$128198$&$1428208$&$17431842$&$229796854$&$3260731764$ \\
$2$& $1$&$1$&$2$&$2$&$0$&$1$&$-8$&$-71$&$-645$&$-5916$&$-60661$&$-680524$&$-8319674$&$-110000218$&$-1564363190$&$-23810497027$ \\
$3$& $1$&$1$&$2$&$2$&$1$&$4$&$22$&$148$&$1432$&$14933$&$173615$&$2170285$&$29302324$&$423870178$&$6547649971$&$107566687178$ \\
$4$& $1$&$1$&$2$&$1$&$2$&$2$&$-6$&$-158$&$-1911$&$-24310$&$-324161$&$-4610023$&$-69547908$&$-1112576568$&$-18825687993$&$-336214369334$ \\
$5$& $1$&$1$&$2$&$1$&$2$&$4$&$14$&$114$&$1677$&$26129$&$415109$&$6837661$&$117254253$&$2100407688$&$39335548893$&$770173833141$ \\
$6$& $1$&$1$&$2$&$1$&$2$&$3$&$9$&$-34$&$-900$&$-18622$&$-368380$&$-7231581$&$-143971474$&$-2934457032$&$-61596805980$&$-1335762223221$ \\
$7$& $1$&$1$&$2$&$1$&$2$&$3$&$11$&$8$&$296$&$8548$&$223615$&$5456280$&$129717602$&$3072060291$&$73437715819$&$1786143030025$ \\
$8$& $1$&$1$&$2$&$1$&$2$&$3$&$11$&$0$&$-20$&$-2292$&$-88814$&$-2878337$&$-85129652$&$-2410044793$&$-66988401529$&$-1856600124025$ \\
$9$& $1$&$1$&$2$&$1$&$2$&$3$&$11$&$0$&$18$&$268$&$20992$&$1010696$&$39693125$&$1399384996$&$46526543233$&$1500185772169$ \\
$10$& $1$&$1$&$2$&$1$&$2$&$3$&$11$&$0$&$18$&$-7$&$-2154$&$-212879$&$-12481703$&$-584609581$&$-24221345542$&$-934645878596$ \\
$11$& $1$&$1$&$2$&$1$&$2$&$3$&$11$&$0$&$18$&$-7$&$71$&$20256$&$2377198$&$166410659$&$9165254242$&$440926365563$ \\
$12$& $1$&$1$&$2$&$1$&$2$&$3$&$11$&$0$&$18$&$-7$&$71$&$-102$&$-207026$&$-28931603$&$-2383055405$&$-152471494842$ \\
$13$& $1$&$1$&$2$&$1$&$2$&$3$&$11$&$0$&$18$&$-7$&$71$&$-102$&$295$&$2319534$&$381124388$&$36485839046$ \\
$14$& $1$&$1$&$2$&$1$&$2$&$3$&$11$&$0$&$18$&$-7$&$71$&$-102$&$295$&$-602$&$-28285686$&$-5402710802$ \\
$15$& $1$&$1$&$2$&$1$&$2$&$3$&$11$&$0$&$18$&$-7$&$71$&$-102$&$295$&$-602$&$1730$&$373201519$ \\
$16$& $1$&$1$&$2$&$1$&$2$&$3$&$11$&$0$&$18$&$-7$&$71$&$-102$&$295$&$-602$&$1730$&$-3616$ \\
\end{tabular}
\caption{Euler characteristics $e(\MG_{g,n}^{\SG_n})$}
\label{tab:eEeven}
\end{table}

\begin{table}
\begin{tabular}{c|r r r r r r r r r r r r r r r r}
\backslashbox{$n$}{$g$}& 1&2&3&4&5&6&7&8&9&10&11&12&13&14&15&16 \\
\hline
$0$& &$0$&$0$&$-1$&$0$&$-1$&$-2$&$-8$&$-38$&$-275$&$-2225$&$-20358$&$-207321$&$-2320136$&$-28287416$&$-373205135$ \\
$1$& $0$&$0$&$-1$&$-1$&$-1$&$1$&$3$&$34$&$278$&$2285$&$20921$&$212777$&$2376903$&$28931001$&$381122658$&$5402707186$ \\
$2$& $0$&$1$&$-1$&$-1$&$-2$&$-1$&$-17$&$-114$&$-918$&$-8555$&$-88885$&$-1010798$&$-12481998$&$-166411261$&$-2383057135$&$-36485842662$ \\
$3$& $0$&$1$&$-2$&$-1$&$-2$&$2$&$11$&$158$&$1659$&$18615$&$223544$&$2878235$&$39692830$&$584608979$&$9165252512$&$152471491226$ \\
$4$& $0$&$1$&$-2$&$1$&$-1$&$0$&$-19$&$-148$&$-1929$&$-26136$&$-368451$&$-5456382$&$-85129947$&$-1399385598$&$-24221347272$&$-440926369179$ \\
$5$& $0$&$1$&$-2$&$1$&$-2$&$1$&$-8$&$71$&$1414$&$24303$&$415038$&$7231479$&$129717307$&$2410044191$&$46526541503$&$934645874980$ \\
$6$& $0$&$1$&$-2$&$1$&$-2$&$3$&$-10$&$-31$&$-663$&$-14940$&$-324232$&$-6837763$&$-143971769$&$-3072060893$&$-66988403259$&$-1500185775785$ \\
$7$& $0$&$1$&$-2$&$1$&$-2$&$3$&$-11$&$-1$&$136$&$5909$&$173544$&$4609921$&$117253958$&$2934456430$&$73437714089$&$1856600120409$ \\
$8$& $0$&$1$&$-2$&$1$&$-2$&$3$&$-11$&$0$&$-39$&$-1412$&$-60732$&$-2170387$&$-69548203$&$-2100408290$&$-61596807710$&$-1786143033641$ \\
$9$& $0$&$1$&$-2$&$1$&$-2$&$3$&$-11$&$0$&$-18$&$117$&$12338$&$680422$&$29302029$&$1112575966$&$39335547163$&$1335762219605$ \\
$10$& $0$&$1$&$-2$&$1$&$-2$&$3$&$-11$&$0$&$-18$&$-7$&$-1273$&$-128300$&$-8319969$&$-423870780$&$-18825689723$&$-770173836757$ \\
$11$& $0$&$1$&$-2$&$1$&$-2$&$3$&$-11$&$0$&$-18$&$-7$&$-71$&$10636$&$1427913$&$109999616$&$6547648241$&$336214365718$ \\
$12$& $0$&$1$&$-2$&$1$&$-2$&$3$&$-11$&$0$&$-18$&$-7$&$-71$&$-102$&$-113196$&$-17432444$&$-1564364920$&$-107566690794$ \\
$13$& $0$&$1$&$-2$&$1$&$-2$&$3$&$-11$&$0$&$-18$&$-7$&$-71$&$-102$&$-295$&$1270546$&$229795124$&$23810493411$ \\
$14$& $0$&$1$&$-2$&$1$&$-2$&$3$&$-11$&$0$&$-18$&$-7$&$-71$&$-102$&$-295$&$-602$&$-15670121$&$-3260735380$ \\
$15$& $0$&$1$&$-2$&$1$&$-2$&$3$&$-11$&$0$&$-18$&$-7$&$-71$&$-102$&$-295$&$-602$&$-1730$&$208211161$ \\
$16$& $0$&$1$&$-2$&$1$&$-2$&$3$&$-11$&$0$&$-18$&$-7$&$-71$&$-102$&$-295$&$-602$&$-1730$&$-3616$ \\
\end{tabular}
\caption{Euler characteristics $e^\mathrm{odd}(\MG_{g,n}^{\SG_n})$}
\label{tab:eEodd}
\end{table}

\begin{table}
\begin{tabular}{c|r r r r r r}
\backslashbox{$g$}{$n$}& 0&1&2&3&4&5 \\
\hline
$0$& &&&$s_{3}$&$s_{4}$&$s_{5}$ \\
$1$& &$s_{1}$&$s_{2}$&$s_{1,1,1} + s_{3}$&$s_{2,1,1} + s_{4}$&$s_{1,1,1,1,1} + s_{3,1,1} + s_{5}$ \\
$2$& $1$&$s_{1}$&$s_{2}$&$s_{3}$&$-s_{2,1,1} + s_{2,2} + s_{4}$&$-s_{2,1,1,1} - s_{3,1,1} + s_{3,2} + s_{5}$ \\
$3$& $1$&$s_{1}$&$2 s_{2}$&$2 s_{2,1} + 2 s_{3}$&$s_{2,1,1} + 2 s_{2,2} + 3 s_{3,1} + 2 s_{4}$&$s_{2,1,1,1} + 2 s_{2,2,1} + 3 s_{3,1,1} + 3 s_{3,2} + 3 s_{4,1} + 2 s_{5}$ \\
$4$& $2 $&$2 s_{1}$&$s_{1,1} + 2 s_{2}$&$2 s_{1,1,1} + s_{2,1} + 2 s_{3}$&$s_{1,1,1,1} + s_{2,2} + s_{4}$&$2 s_{1,1,1,1,1} + s_{2,1,1,1} - s_{3,1,1} - s_{4,1} + s_{5}$ \\
$5$& $1$&$0$&$-2 s_{1,1}$&$-s_{1,1,1} + s_{3}$&$-5 s_{1,1,1,1} - 4 s_{2,1,1} - 3 s_{2,2} - s_{3,1} + 2 s_{4}$&$4 s_{2,1,1,1} + s_{2,2,1} + 4 s_{3,1,1} + s_{3,2} + 3 s_{4,1} + 2 s_{5}$ \\
$6$& $2 $&$3 s_{1}$&$s_{2}$&$10 s_{1,1,1} + 11 s_{2,1} + 4 s_{3}$&$-5 s_{1,1,1,1} - 15 s_{2,1,1} - 4 s_{2,2} - 10 s_{3,1} + 2 s_{4}$&$34 s_{1,1,1,1,1} + 51 s_{2,1,1,1} + 41 s_{2,2,1} + 22 s_{3,1,1} + 16 s_{3,2} + 4 s_{4,1} + 4 s_{5}$ \\
\end{tabular}
\caption{$\SG_n$-equivariant Euler characteristic $e_{\SG_n}(\MG_{g,n})$ in the Schur basis of $\Lambda_n$}
\label{tab:eEschureven}
\end{table}

\begin{table}
\begin{tabular}{c|r r r r r r}
\backslashbox{$g$}{$n$}& 0&1&2&3&4&5 \\
\hline
$0$& &&&$s_{3}$&$s_{4}$&$s_{5}$ \\
$1$& &$0$&$-s_{1,1}$&$-s_{2,1}$&$-s_{1,1,1,1} - s_{3,1}$&$-s_{2,1,1,1} - s_{4,1}$ \\
$2$& $0$&$0$&$-s_{1,1} + s_{2}$&$-s_{1,1,1} + s_{3}$&$-s_{1,1,1,1} - s_{2,1,1} + s_{2,2} + s_{4}$&$-s_{1,1,1,1,1} - s_{2,1,1,1} - s_{3,1,1} + s_{3,2} + s_{5}$ \\
$3$& $0$&$-s_{1}$&$-2 s_{1,1} - s_{2}$&$-2 s_{1,1,1} - 2 s_{2,1} - 2 s_{3}$&$-2 s_{1,1,1,1} - 3 s_{2,1,1} - s_{2,2} - 3 s_{3,1} - 2 s_{4}$&$-2 s_{1,1,1,1,1} - 3 s_{2,1,1,1} - 2 s_{2,2,1} - 3 s_{3,1,1} - 3 s_{3,2} - 3 s_{4,1} - 2 s_{5}$ \\
$4$& $-1$&$-s_{1}$&$-2 s_{1,1} - s_{2}$&$-2 s_{1,1,1} - 2 s_{2,1} - s_{3}$&$-3 s_{1,1,1,1} - 4 s_{2,1,1} - 2 s_{2,2} - 3 s_{3,1} + s_{4}$&$-3 s_{1,1,1,1,1} - 4 s_{2,1,1,1} - 4 s_{2,2,1} - 4 s_{3,1,1} - 3 s_{3,2} - s_{4,1} + s_{5}$ \\
$5$& $0$&$-s_{1}$&$-3 s_{1,1} - 2 s_{2}$&$-s_{1,1,1} - 3 s_{2,1} - 2 s_{3}$&$-7 s_{1,1,1,1} - 9 s_{2,1,1} - 5 s_{2,2} - 7 s_{3,1} - s_{4}$&$-2 s_{1,1,1,1,1} - 6 s_{2,1,1,1} - 5 s_{2,2,1} - 8 s_{3,1,1} - 4 s_{3,2} - 4 s_{4,1} - 2 s_{5}$ \\
$6$& $-1$&$s_{1}$&$-6 s_{1,1} - s_{2}$&$4 s_{1,1,1} + 7 s_{2,1} + 2 s_{3}$&$-22 s_{1,1,1,1} - 32 s_{2,1,1} - 12 s_{2,2} - 14 s_{3,1}$&$19 s_{1,1,1,1,1} + 27 s_{2,1,1,1} + 26 s_{2,2,1} + 6 s_{3,1,1} + 10 s_{3,2} - s_{4,1} + s_{5}$ \\
\end{tabular}
\caption{$\SG_n$-equivariant Euler characteristic $e_{\SG_n}^\mathrm{odd}(\MG_{g,n})$ in the Schur basis of $\Lambda_n$}
\label{tab:eEschurodd}
\end{table}

\end{landscape}
\restoregeometry

\newcommand{\etalchar}[1]{$^{#1}$}
\providecommand{\bysame}{\leavevmode\hbox to3em{\hrulefill}\thinspace}
\providecommand{\MR}{\relax\ifhmode\unskip\space\fi MR }
\providecommand{\MRhref}[2]{%
  \href{http://www.ams.org/mathscinet-getitem?mr=#1}{#2}
}
\providecommand{\href}[2]{#2}

\end{document}